\newtheorem{thm}{Theorem}[section] 
\newtheorem{pro}[thm]{Proposition}
\theoremstyle{definition}           
\newtheorem{defn}[thm]{Definition}  
\newcommand{\NI}{\noindent}
\newcommand{\bea}{\begin{eqnarray}}
\newcommand{\eea}{\end{eqnarray}}
\def \b #1 {\bf #1}
\newcommand{\IR}{\mathbb{R}}
\newcommand{\IC}{\mathbb{C}}
\newcommand{\IT}{\mathbb{T}}
\newcommand{\IZ}{\mathbb{Z}}
\newcommand{\cal}{\mathcal}
\newcommand{\clk}{{\cal K}}
\newcommand{\cla}{{\cal A}}
\newcommand{\cli}{{\cal I}}
\newcommand{\clh}{{\cal H}}
\newcommand{\clp}{{\cal P}}
\newcommand{\clo}{{\cal O}}
\newcommand{\clb}{{\cal B}}
\newcommand{\clj}{{\cal J}}
\newcommand{\cld}{{\cal D}}
\newcommand{\clm}{{\cal M}}
\newcommand{\raro}{\rightarrow}
\newcommand{\vsp}{\vskip 1em}
\newcommand{\ul}{\underline}
\newcommand{\be}{\begin{equation}}
\newcommand{\ee}{\end{equation}}
\newcommand{\ben}{\begin{eqnarray*}}
\newcommand{\een}{\end{eqnarray*}}
\begin{document}

\title{Translation invariant pure state on $\clb=\otimes_{j \in \IZ}\!M^{(j)}_d(\IC)$ and its split property}

\author{ Anilesh Mohari }
\thanks{...}

\address{ The Institute of Mathematical Sciences, CIT Campus, Taramani, Chennai-600113 }

\email{anilesh@imsc.res.in}

\keywords{Uniformly hyperfinite factors, Cuntz algebra, Popescu dilation, Haag duality }

\subjclass{46L}

\thanks{ }

\begin{abstract}
We prove that a real lattice symmetric reflection positive translation-invariant pure state of $\clb=\otimes_{j \in \IZ}M^{(j)}_d(\IC)$ admits split property, if and only if its two-point spatial correlation functions decay exponentially. 
\end{abstract}

\maketitle 
\section{ Introduction }

\vsp 
Translation invariant pure states often appear as ground states of the two-side infinite quantum chain Heisenberg type of models [BR-II,Si]. Several types of cluster properties for two-point spatial correlation functions of pure states are investigated in the last few decades, both by operator algebraist and mathematical physicist in order to understand its mathematical structure besides its physical implications on bulk parameters in low temperature condense matter physics. In this paper our primary concern is to relate two apparently different cluster conditions known in the literature as `split property' and `exponentially decaying property' for two-point correlation functions of a translation-invariant state. These two properties are strongly related, as both ask for a decay of two-point correlation functions, however defined using different quantifiers. That is, `the split property' demands that the rate of the decay is independent from the observables, uniformly in norm, but arbitrary. In comparison, the exponentially decaying of two-point spatial correlation function only demands the decay exponent be independent of observables, but total decay as such may not be independent of two concern observables. Thus in some sense, both the cluster conditions have some universal properties with respect to observables but expressed quite differently.  

\vsp 
There is also another characterization ([Ma3] using Theorem 2.7 in [Pow]) of the split property in terms of the type of the von Neumann algebra generated by one half of the chain that it has to be type I. A recent result [Mo1] also says that the type of the von Neumann algebra generated by one half of the chain has to be either a type-I or a type-III factor for 
a two-sided translation-invariant pure state. Thus the split or non split property of a two-sided translation-invariant state is clearly related to which type of von-Neumann algebra 
are generated by the one half of the infinite chain. On the other hand, `the exponential decaying condition' has an easier appeal for numerical analysis, besides an interesting quantity for experimental labs for low temperature physics of magnetic materials with lattice or quasi lattice structure [DR]. Some recent development in quantum information theory also aimed to understand entanglement of a translation-invariant pure state $\omega$ on $\clb=\otimes_{j \in \IZ}\!M_d^{(j)}(\IC)$ [KMSW] by treating it as a state on the bipartite systems consisting of two half infinite chains and its deeper relation to Bell's inequality [SW] which is a functional of two-point spatial correlation functions of $\omega$. In other words two-point spatial correlation functions plays a crucial role to determine bulk quantities such as magnetic moments and maximal violation of Bell's inequality.
  
\vsp 
It is the goal of this paper to connect these two properties. We prove that translational invariant pure states on the two-side infinite quantum spin chain, further satisfying a few additional symmetry assumptions, possess the split property, if and only if its two-point correlation functions decay exponentially. Below we formulate the problem in a little more general framework of $C^*$-dynamical system [BR-II] namely for higher dimensional 
lattices of quantum spin chains.    

\vsp 
The uniformly hyper-finite $C^*$-algebra $\clb=\otimes_{\ul{j} \in \IZ^k}\!M_d^{(\ul{j})}(\IC)$ of infinite tensor product of $d \times d$-square matrices $\!M^{(\ul{j})}_d(\IC) \equiv \!M_d(\IC)$ levelled by $k-$dimensional lattice points $\ul{j}=(j_1,j_2..,j_k) \in \IZ^k\;,\;k \ge 1$ of integers, is the norm closure of algebraic inductive limit with the 
net of finite dimensional $C^*$ algebras $\clb_{\Lambda}= \otimes_{\ul{j} \in \in \Lambda }\!M_d^{(\ul{j})}(\IC)$, where $\Lambda \subset \IZ^k$ are finite subsets and an element in $\clb_{\Lambda_1}$ is identified with element in $\clb_{\Lambda_2}$ by the inclusion map if $\Lambda_1 \subseteq \Lambda_2$. We use the symbol $\clb_{loc}$ to denote union of all local algebras $\clb_{\Lambda}: \Lambda \subset \IZ^k$ finite subsets. Thus $\clb$ is a quasi-local $C^*$-algebra with union of local algebras  $\clb_{loc}$ dense in $\clb$ and $\clb_{\Lambda}'=\clb_{\Lambda^c}$, where $\clb'_{\Lambda}$ is the commutant of $\clb_{\Lambda}$ and $\Lambda^c$ is the complementary set of $\Lambda$ in $\IZ^k$. 
We refer to Chapter 6 of [BR-II] for more details on quasi-local $C^*$-algebras,      

\vsp 
The lattice $\IZ^k$ is a group under co-ordinate wise addition and for each $\ul{n} \in \IZ^k$, we have an automorphism $\theta^{(\ul{n})}$, extending the translation action, which takes $Q^{(\ul{j})}$ to $Q^{(\ul{j}+\ul{n})}$ for any $Q \in \!M_d(\IC)$ and $\ul{j} \in \IZ^k$, by linearity and multiplicative property on $\clb$. A unital positive linear functional $\omega$ on $\clb$ is called {\it state}. It is called {\it translation-invariant} if $\omega = \omega \theta^{(\ul{n})}$ for all $\ul{n} \in \IZ^k$. A linear automorphism or anti-automorphism $\beta$ [Ka] on $\clb$ is called {\it symmetry } for $\omega$ if $\omega \beta = \omega$. Our primary objective is to study in mathematical generality translation-invariant states and their symmetries that find relevance in Hamiltonian dynamics of quantum spin chain models [BR-II,Ru,Si].     

\vsp 
We consider [BR-II,Ru] quantum spin chain Hamiltonian in $k-$dimensional lattice 
of the following form
\be 
H= \sum_{ \ul{n} \in \IZ_k} \theta^{(\ul{n})}(h_0)
\ee
for $h^*_0=h_0 \in \clb_{loc}$, where the formal sum in (1) gives a group of auto-morphisms $\alpha=(\alpha_t:t \in \IR)$ by 
the thermodynamic limit: $\mbox{lim}_{\Lambda_{\eta} \uparrow \IZ_k}||\alpha^{\Lambda_{\eta}}_t(A)-\alpha_t(A)||=0$ for all $A \in \clb$ and $t \in \IR$ for 
a net of finite subsets $\Lambda_{\eta}$ of $\IZ$ with uniformly bounded surface energy, where automorphisms $\alpha^{\Lambda}_t(x)=e^{itH_{\Lambda}}xe^{-itH_{\Lambda}}$ is determined by finite the subset $\Lambda$ of $\IZ_k$ and $H_{\Lambda}=\sum_{\ul{n} \in \Lambda} \theta^{(\ul{n})}(h_0)$. Further the limiting automorphism $(\alpha_t)$ does not depend on the net that we choose in the thermodynamic limit $\Lambda_{\eta} \uparrow \IZ_k$ provided the surface energies of $H_{\Lambda_\eta}$ are kept uniformly bounded. The uniquely determined group of automorphisms $(\alpha_t)$ on $\clb$ is called {\it Heisenberg flows } of $H$. In particular,  we have $\alpha_t \circ \theta^{(\ul{n})} = \theta^{(\ul{n})} \circ \alpha_t$ for all $t \in \IR$ and $\ul{n} \in \IZ^k$. Any linear automorphism or anti-automorphism $\beta$ on $\clb_{loc}$, keeping the formal sum (1) in $H$ invariant, will 
also commute with $(\alpha_t)$.  

\vsp 
A state $\omega$ is called {\it stationary} for $H$ if $\omega \alpha_t= \omega$ on $\clb$ for all $t \in \IR$. The set of stationary states of $H$ is a non-empty compact convex set and is extensively studied in the last few decades within the framework of ergodic theory for $C^*$-dynamical systems [BR-I]. However,  a stationary state of $H$ need not be always translation-invariant. A stationary state $\omega$ of $\clb$ for $H$ is called $\beta$-KMS state at an inverse positive temperature $\beta > 0$ if there exists a function $z \raro f_{A,B}(z)$, analytic on the open strip $0 < Im(z) < \beta$, bounded continuous on the closed strip $0 \le Im(z) \le \beta$ with boundary condition 
$$f_{A,B}(t)=\omega_{\beta}(\alpha_t(A)B),\;\;f_{A,B}(t+i\beta)=\omega_{\beta}(\alpha_t(B)A)$$
for all $A,B \in \clb$. The KMS condition expresses approximate commutation rules for 
two elements $A,B \in \clb$ within the evaluation by $\omega_{\beta}$. Using weak$^*$ compactness of convex set of states on $\clm$, finite volume Gibbs state $\omega_{\beta,\Lambda}$ is used to prove existence of a KMS state $\omega_{\beta}$ for $(\alpha_t)$ at inverse positive temperature $\beta > 0$. The set of KMS states of $H$ at a given inverse temperature forms a non empty simplex and its extreme points are translation-invariant ergodic states. This gives a strong motivation to study translation-invariant states 
in a more general framework of $C^*$-dynamical systems [BR-I].    

\vsp 
A state $\omega$ of $\clb$ is called {\it ground state} for $H$,if the following two conditions are satisfied:

\NI (a) $\omega(\alpha_t(A))=\omega(A)$ for all $t \in \IR$; 

\NI (b) If we write on the GNS space $(\clh_{\omega},\pi_{\omega},\Omega)$ of $(\clb,\omega)$, $$\alpha_t(\pi_{\omega}(A))=e^{itH_{\omega}}\pi_{\omega}(A)e^{ -itH_{\omega}}$$ 
for all $A \in \clb$ with $H_{\omega}\Omega=0$ then, $H_{\omega} \ge 0$.   

\vsp 
By taking low temperature limit of $\omega_{\beta}$ as $\beta \raro \infty$, one also proves existence of a ground state for $H$ [Ru,BR-II]. On the contrary to KMS states, the set of ground states is a convex face in the convex set of $(\alpha_t)$ invariant states of $\clb$ and its extreme points are pure states of $\clb$ i.e. it can not be expressed as a convex combination of two different states of $\clb$. 

\vsp 
On the other hand, many interesting results on ground states, that are known for specific Heisenberg type of models [LSM], such as $XY$ models [Ara2,AMa], Ghosh-Majumder models [GM] and $AKLT$ models [AKLT], give rises interesting conjectures [AL],[Ma3] on the general behaviour of ground states and its physical implication for prime Hamiltonian such as Heisenberg iso-spin $H_{XXX}$ models. In particular,  anti-ferromagnetic Heisenberg iso-spin models find special place in low temperature physics of magnetic materials [Ef,DR,Ma3]. We refer interested readers for an historical account to the survey paper [Na]. 

\vsp 
Further we say a ground state $\omega$ is {\it non-degenerate}, if null space of $H_{\omega}$ is spanned by $\Omega$ only. We say $\omega$ has a {\it mass gap}, if the spectrum $\sigma(H_{\omega})$ of $H_{\omega}$ is a subset of $\{ 0 \} \bigcap [\delta, \infty)$ for some $\delta >0$. For a wide class of spin chain models [NaS], which includes  Hamiltonian $H$ with finite range interaction, $h_0$ being in $\clb_{loc}$, the existence of a non vanishing spectral gap of a ground state $\omega$ of $H$ implies exponential decaying two-point spatial correlation functions. We present now a precise definition for exponential decay of two-point spatial correlation functions of $\omega$. We use symbol $\Lambda^c_m$ for complementary set of the finite volume box $\Lambda_m = \{ \ul{n}=(n_1,n_2..,n_k): -m \le n_j \le m \}$ for $m \ge 1$ 
and $||\ul{n}||^2=\sum_{1 \le j \le k}|n_j|^2$. 

\vsp
\begin{defn} 
Let $\omega$ be a translation-invariant state of $\clb$. We say that the two points spatial correlation functions of $\omega$ {\it decay exponentially}, if there exists a $\delta > 0$ satisfying the following condition: for any two local elements $Q_1,Q_2 \in \clb$ and $\epsilon > 0$, there exists an integer $m \ge 1$ such that    
\be
e^{\delta ||\ul{n}||} |\omega( Q_1 \theta^{(\ul{n})}(Q_2) ) - \omega(Q_1) \omega(Q_2)| \le \epsilon
\ee
for all $\ul{n} \in \Lambda^c_m$.  
\end{defn}

\vsp 
We also recall [BR-II,Ma3] a standard definition of a state to be split in the following. For a more general definition of split property we refer to [DL]. For the present problem, we follow the definition of split property adopted in [Ma3].  

\vsp
\begin{defn} 
Let $\omega$ be a translation-invariant state of $\clb$ and $\omega_{\Lambda}$ be the state $\omega$ restricted to $\clb_{\Lambda}$. We say that $\omega$ is {\it split}, if the following condition is valid for any subset $\Lambda$ of $\IZ_k$: Given any $\epsilon > 0$ there exists a $m \ge 1$ so that
\be
\mbox{sup}_{||Q|| \le 1}|\omega(Q)-\omega_{\Lambda} \otimes \omega_{\Lambda^c}(Q)| \le \epsilon,
\ee
where the above sup is taken over all local elements $Q \in \clb_{ \Lambda^c_m }$ with the norm less than equal to $1$. 
\end{defn}

\vsp 
The uniform clustering property (3) of the state $\omega$ has its mathematical appeal which guarantees that $\omega$ is {\it quasi equivalent } to the tensor product state $\omega_{\Lambda} \otimes \omega_{\Lambda^c}$ by Theorem 2.7 in [Pow]. In contrast to split condition (3), the exponent $\delta > 0$, in the exponentially decaying clustering condition (2), is independent of $Q_1,Q_2 \in \clb_{loc}$, however $m \ge 1$ 
in (2) may depend on $Q_1$ and $Q_2$.   

\vsp 
A state $\omega$ on a $C^*$-algebra $\clb$ is called factor, if the center of the von-Neumann algebra $\pi_{\omega}(\clb)''$ is trivial, where $(\clh_{\omega},\pi_{\omega},\Omega)$ is the GNS space associated with $\omega$ on $\clb$ [BR-I] and $\pi_{\omega}(\clb)''$ is the double commutant of $\pi_{\omega}(\clb)$. A state $\omega$ on $\clb$ is pure, if and only if $\pi_{\omega}(\clb)''=\clb(\clh_{\omega})$, the algebra of all bounded operators on $\clh_{\omega}$. Here we fix our convention that Hilbert spaces that are considered here are always equipped with inner products $ \langle .,. \rangle $ which are linear in the second variable and conjugate linear in the first variable. 

\vsp 
We recall a well known result, Theorem 2.5 in [Pow], that a translation-invariant state $\omega$ of $\clb$ is a factor state,  if and only if for any given $Q_2 \in \clb$ and $\epsilon > 0$, there exists 
an integer  $m \ge 1$ so that  
\be 
\mbox{sup}_{ Q_1 \in \clb_{\Lambda^c_m},||Q_1|| \le 1}|\omega(Q_1 \theta^{(\ul{n})}Q_2)) - \omega(Q_1)\omega(Q_2)| \le \epsilon 
\ee 
for all $\ul{n} \in \Lambda^c_m$.  In particular,  this criteria is used to deduce that a translation-invariant state $\omega$ of $\clb$ is a factor state,  if and only if 
$\omega_{\Lambda}$ ( $\omega_{\Lambda^c}$ ) is a factor state for all subsets of 
$\Lambda$ of $\IZ^k$. In particular,  a translation-invariant split state $\omega$ is a factor state as the uniform clustering condition (3) is a stronger condition than Power's clustering criteria (4). Furthermore, if $\omega$ is a pure translation-invariant state, then 
$\omega_R( \omega_L )$ is type-I,  if and only if $\omega$ is also a split state 
(Proposition 2.3 in [Ma3]). A {\it Gibbs state } [BR-II, Chapter 6.2.2] of a finite range interaction is split. The canonical trace of $\clb$ is a non-pure split state and unique ground state of XY model [AMa,Ma1,Ma2] is a non-split pure state. Our central aim in this paper is to find a criterion for a translation-invariant pure state $\omega$ to be split. 
The UHF algebra $\clb$ being a quasi local $C^*$ algebra, following [Ha, DHR] we say that a translation-invariant pure state $\omega$ of $\clb$ admits {\it Haag duality property }, if 
$$\pi_{\omega}(\clb_{\Lambda^c})'' = \pi_{\omega}(\clb_{\Lambda})'$$ 
for all subsets $\Lambda$ of $\IZ^k$ in the GNS space $(\clh_{\omega},\pi_{\omega},\Omega)$ of $(\clb,\omega)$ and $\Lambda^c$ is the complementary set of $\Lambda$ in $\IZ^k$. A proof for Haag duality is given only for the case $k=1$ in a recent paper [Mo2] but the method used in the proof,  does not seem to have a ready adaptation to higher lattice dimensions. For this reason we confine ourselves now onwards only to the case $k=1$ i.e. one dimensional quantum spin chain since the Haag duality property is crucially used in Proposition 3.2 
for further analysis of pure states.      

\vsp 
Let $\clb=\otimes_{j \in \IZ}\!M^{(j)}_d(\IC)$ be the uniformly hyper-finite $C^*$-algebra over the lattice $\IZ$, where $\!M^{(j)}_d(\IC)$ denote a copy of 
the algebra of $d \times d$-matrices over the field of complex numbers $\IC$ [Sa]. 
In other words $\clb$ is the $C^*$ -completion of the infinite tensor product of the algebra $\!M_d(\IC)$ of $d$ by $d$ complex matrices,  where each component of the tensor product element is indexed by an integer $j$. Let $Q$ be a matrix in $\!M_d(\IC)$. By $Q^{(j)}$ we denote the element $...\otimes 1 \otimes 1 ... 1 \otimes Q \otimes 1 \otimes ... 1\otimes ... $, where $Q$ appears in the $j$-th component. Given a subset $\Lambda$ of $\!Z$, $\clb_{\Lambda}$ is defined as the $C^*$-sub-algebra
of $\clb$ generated by all $Q^{(j)}$ with $Q \in \!M_d(\IC)$, $j \in \Lambda$. We also set
$$\clb_{loc}= \bigcup_{\Lambda:|\Lambda|  \langle  \infty } \clb_{\Lambda}$$
, where $|\Lambda|$ is the cardinality of $\Lambda$. Let $\omega$ be a state on $\clb$. The restriction of $\omega$
to $\clb_{\Lambda}$ is denoted by $\omega_{\Lambda}$. We also set $\omega_{R}=\omega_{[1,\infty)}$ and $\omega_{L}=
\omega_{(-\infty,0]}$. For each $k \in \IZ$, $\theta^k$ is an automorphism of $\clb$ defined by extending the action given by $\theta^k(Q^{(j)})=Q^{(j+k)}$ for all $Q \in \!M_d(\IC)$ and $j \in \IZ$. Thus $\theta^1,\theta^{-1}$ are unital $*$-endomorphisms 
on $\clb_R$ and $\clb_L$ respectively. We use often simply $\theta$ instead of $\theta^1$. 
We say $\omega$ is translation-invariant, if $\omega \circ \theta = \omega$ on $\clb$. In such a case $(\clb_R,\theta,\omega_{R})$ and $(\clb_L,\theta^{-1},\omega_{L})$ are two unital $*$-endomorphisms with invariant states. In this paper we will consider only translation-invariant states of $\clb$. 

\vsp 
We continue in this paper our investigations [Mo1,Mo2] on properties of translation-invariant pure states of $\clb$ with additional symmetries that arises naturally for ground states of Heisenberg-type ferromagnetic or anti-ferromagnetic models in one lattice dimensional quantum spin chains $\clb$ and settle partially a general mathematical conjecture raised in [Ma3] on split property and its relation with asymptotic behaviour of two-point spatial correlation functions. Our main result in particular,  also settles partially some problems raised in 
[KMSW] on infinitely entangled states. Taku Matsui, in his paper [Ma3], conjectured that the exponential decaying property of two-point spatial correlation functions of a translation-invariant factor state $\omega$ will imply split property of the state $\omega$. We will prove this conjecture for a translation-invariant pure state $\omega$ on, one lattice dimensional quantum spin chain namely, $\clb=\otimes_{j \in \IZ} \!M^{(j)}_d(\IC)$, under some additional symmetries on $\omega$ which are described below and main result is stated 
as a theorem (Theorem 1.3). 

\vsp 
If $\omega$ is a translation-invariant pure state on $\clb$ then, Theorem 1.1 in [Mo1] says that $\pi_{\omega}(\clb_R)''$ is either a type-I or a type-III factor. On the other hand, if $\omega_R$ is a type-I factor state for a translation-invariant state $\omega$ then, $\omega$ is pure on $\clb$ by Theorem 2.8 in [Mo1] (for a different proof,  see [Ma3]). There exist examples [AMa] of translation-invariant pure states $\omega$ on $\clb$ for which $\omega_R$ is a type-III factor state on $\clb_R$. It is much easier [BJKW] to construct a translation-invariant state $\omega$ on $\clb$ with $\omega_R$ as type-I factor state using Popescu's dilation theory [Po]. A purely mathematical question that arises now: how to assert which type of factor states $\omega_R (\omega_L)$ are,  i.e. type-I or type-III factors, by studying additional symmetry of the state $\omega$ or asymptotic behaviour of the group of automorphisms $(\clb,\theta^n,\omega)$? 

\vsp
Let $Q \raro \tilde{Q}$ be the automorphism on $\clb$ that maps an element 
$$Q=Q_{-l}^{(-l)} \otimes Q_{-l+1}^{(-l+1)} \otimes ... \otimes Q_{-1}^{(-1)} \otimes Q_0^{(0)} \otimes Q_1^{(1)} ... \otimes Q_n^{(n)}$$ by reflecting around the point ${1 \over 2}$ of the lattice $\IZ$ to 
$$\tilde{Q}= Q_n^{(-n+1)}... \otimes Q_1^{(0)} \otimes Q_0^{(1)} \otimes Q_{-1}^{(2)} \otimes ... Q_{-l+1}^{(l)} \otimes Q_{-l}^{(l+1)}$$
for all $n,l \ge 1$ and $Q_{-l},..Q_{-1},Q_0,Q_1,..,Q_n \in 
M_d(\IC)$. 

\vsp 
For a state $\omega$ of $\clb$ we set a state $\tilde{\omega}$ of $\clb$ by 
\be 
\tilde{\omega}(Q)= \omega(\tilde{Q})
\ee
for all $Q \in \clb$. Thus $\omega \raro \tilde{\omega}$ is an affine one to one onto 
map on the convex set of states of $\clb$. The state $\tilde{\omega}$ is translation-invariant, ergodic, 
factor state,  if and only if $\omega$ is translation-invariant, ergodic, factor state respectively. We say 
a state $\omega$ is {\it lattice reflection-symmetric} or in short {\it lattice symmetric } if $\omega=\tilde{\omega}$.   
 
\vsp 
If $Q= Q^{(l)}_0 \otimes Q^{(l+1)}_1 \otimes ....\otimes Q^{(l+m)}_m$ we set
$Q^t_e={Q^t}^{(l)}_0 \otimes {Q^t}^{(l+1)}_1 \otimes ..\otimes {Q^t}^{(l+m)}_m,$
where $Q_0,Q_1,...,Q_m$ are arbitrary elements in $M_d(\IC)$ and $Q_0^t,Q^t_1,..$ stands for transpose
with respect to an orthonormal basis $(e_i)$ for $\IC^d$ (not complex conjugate) of $Q_0,Q_1,..$ 
respectively. Note that in order to indicate $Q^t_e$ depends on the basis $e$, we have used suffix $e$ which we will omit assuming that it won't confuse an attentive reader once a fixed orthonormal basis $(e_i)$ is under consideration. We define $Q^t$ by extending linearly for any $Q \in \clb_{loc}$. For a state $\omega$ of $\clb$, we define a state $\bar{\omega}$ on $\clb$ by the following prescription
\be
\bar{\omega}(Q) = \omega(Q^t)
\ee
Thus the state $\bar{\omega}$ is translation-invariant, ergodic, factor state, if and only if $\omega$ is translation-invariant, ergodic, factor state respectively. We say $\omega$ is {\it real }, if $\bar{\omega}=\omega$.

\vsp 
Unitary matrices $v \in U_d(\IC)$ act naturally on $\clb$ as group of automorphisms defined by 
\be 
\beta_{v}(Q)=(..\otimes v \otimes v \otimes ...)Q(...\otimes v^* \otimes v^* \otimes v^*...)
\ee

\vsp 
We set a conjugate linear map $Q \raro \overline{Q}$ on $\clb$ with respect to a basis $(e_i)$ for $\IC^d$ defined by extending identity action on elements  
$$..I_d \otimes |e_{i_0}\rangle \langle e_{j_0}|^{(k)} \otimes |e_{i_1}\rangle \langle e_{j_1}|^{(k+1)} \otimes 
|e_{i_n}\rangle \langle e_{j_n}|^{(k+n)} \otimes I_d ..,\;1 \le i_k,j_k \le d,\;\;k \in \IZ,\;n \ge 0$$ 
anti-linearly. Thus we have $Q^*=\overline{Q^t}$.

\vsp 
Following a well known notion [FILS], a state $\omega$ on $\clb$ is called {\it reflection positive with a twist $g_0 \in U_d(\IC)$}, if 
\be 
\omega(\clj_{g_0}(Q) Q) \ge 0
\ee 
for all $Q \in \clb_R$,  where $v_0^2=I_d$ and $\clj_{g_0}(Q) = \overline{\beta_{g_0}(\tilde{Q})}$.
 
\vsp 
Let $G$ be a compact group and $g \raro v(g)$ be a $d-$dimensional unitary representation of $G$. By $\gamma_g$ we 
denote the product action of $G$ on the infinite tensor product $\clb$ induced by $v(g)$,
\be 
\gamma_g(Q)=(..\otimes v(g) \otimes v(g)\otimes v(g)...)Q(...\otimes v(g)^*\otimes v(g)^*\otimes v(g)^*...)
\ee
for any $Q \in \clb$, i.e. $\gamma_g=\beta_{v(g)}$. We say $\omega$ is $G$-invariant, if $\omega(\gamma_g(Q))=\omega(Q)$ for all $Q \in \clb_{loc}$. If $G=U_d(\IC)$ and $v:U_d(\IC) \raro U_d(\IC)$ be the natural representation $v(g)=g$, we often use notation $\beta_g$ for $\gamma_g$ for simplification. 

\vsp 
Now we state our main theorem proved in this paper. 

\vsp
\begin{thm} 
Let $\omega$ be a pure lattice reflection-symmetric translation-invariant real ( with respect to a basis $(e_i)$ of $\IC^d$ ) state of $\clb$. If $\omega$ is also reflection positive with 
a twist $g_0 \in U_d(\IC),\;g_0^2=I_d$ then, two-point spatial correlation function of $\omega$ decays exponentially, if and only if $\omega$ is a split state i.e. $\pi_{\omega}(\clb_R)''$ is a type-I factor.
\end{thm}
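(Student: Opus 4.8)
The plan is to attach to $\omega$ a single positive self-adjoint transfer operator, to recast the exponential-decay condition and the type I condition as two statements about its spectrum near $1$, and then to prove that these two statements coincide.

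First I would build the transfer operator. Starting from the unital $*$-endomorphism $(\clb_R,\theta,\omega_R)$ I would use Popescu's dilation [Po] together with the canonical realisation of $\clb_R$ inside the Cuntz algebra $\clo_d$ --- the shift $\theta$ appearing as the restriction of $\lambda(\cdot)=\sum_i S_i(\cdot)S_i^*$ --- to bring in the dilation-theoretic data of [BJKW]: a Hilbert space, a Cuntz frame, and a unital completely positive transfer map governing $\omega_R$. Independently, reflection positivity with the involutive twist $g_0$ (this is where the hypothesis $g_0^2=I_d$ is used), together with reality and lattice symmetry of $\omega$, yields in the standard way (cf.\ [FILS]) a twisted Osterwalder--Schrader reconstruction: the sesquilinear form $(Q_1,Q_2)\mapsto\omega(\clj_{g_0}(Q_1)Q_2)$ on $\clb_R$ is Hermitian --- this is exactly what reality and $\omega=\tilde{\omega}$ provide --- and nonnegative on the diagonal by (8), hence positive semidefinite, so on the completion $\clh_+$ of its quotient by the null space the shift descends, after compression to $\clh_+$, to a positive self-adjoint contraction $T$ carrying the class of $1\in\clb_R$ as an eigenvector for the eigenvalue $1$. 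Because $\omega$ is a factor state, Powers' clustering criterion (4) makes $1$ a simple eigenvalue of $T$; as $T$ is a positive contraction it follows that $T^{n}\to P$ strongly, $P$ being the rank-one projection onto that eigenvector.

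Next, I would express the truncated two-point function through $T$: up to routine bookkeeping with the reflection and the twist one has
$$\omega\!\left(Q_1\,\theta^{n}(Q_2)\right)-\omega(Q_1)\,\omega(Q_2)=\langle\eta_1,(T^{n}-P)\,\eta_2\rangle ,$$
where the vectors $\eta_1,\eta_2\in\clh_+$ run over a total family as $Q_1,Q_2$ range over $\clb_{loc}$ and $\|\eta_i\|$ is controlled by $\|Q_i\|$. Since $T\ge 0$, the spectral theorem upgrades a uniform-in-$n$ exponential bound $e^{\delta n}\,|\langle\eta_1,(T^{n}-P)\eta_2\rangle|\le\varepsilon$ along a total family of vectors to $\mbox{spec}(T)\subseteq\{1\}\cup[0,1-\delta']$ for some $\delta'>0$, and conversely such a gap returns the estimate (2) of Definition~1.1 with rate $\delta'$ and with the integer $m$ depending on $Q_1,Q_2$ only through $\|\eta_1\|\,\|\eta_2\|$, precisely as Definition~1.1 allows. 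Hence the two-point spatial correlation functions of $\omega$ decay exponentially if and only if $T$ has a spectral gap at $1$.

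The final and hardest step is to prove that $T$ has a spectral gap at $1$ if and only if $\clm:=\pi_{\omega}(\clb_R)''$ is of type I. Here Haag duality for $k=1$ from [Mo2] is indispensable: as exploited in Proposition~3.2 it supplies that $\Omega$ is cyclic and separating for $\clm$, identifies $\clm'=\pi_{\omega}(\clb_L)''$, and makes the modular structure of $(\clm,\Omega)$ usable, while Theorem~1.1 of [Mo1] restricts $\clm$ to be of type I or type III. If $T$ is gapped, the spectral projection $I-P$ --- a bounded function of $T$ --- splits the reconstruction space $T$-invariantly, and by compatibility of $T$ with the Markov structure of $\theta$ on $\clm$ this yields a normal projection of controlled size in $\clm$; I would use it to manufacture a minimal projection in $\clm$, equivalently to realise $\omega_R$ as a density-matrix state on some $B(\clk)$, ruling out type III and hence, by the dichotomy together with Proposition~2.3 of [Ma3], showing that $\omega$ is split. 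Conversely, if $\clm$ is of type I then $\clh_{\omega}\cong\clk\otimes\bar{\clk}$, the state $\omega_R$ has a density matrix $\rho$ and is generated by a unital completely positive map via Popescu's dilation, whose unique faithful attracting fixed point is $\rho$ (here purity of $\omega$ is used); reflection positivity identifies the corresponding transfer operator with $T$ and forces its subleading spectral value to lie strictly below $1$, that is, a spectral gap for $T$. I expect this last equivalence to be the crux: turning a bare analytic spectral gap into an operator-algebraic type statement, and in the converse extracting the gap from type I rather than merely from purity --- purity alone being insufficient, since [Mo1] already permits pure states with type III half-chain algebra --- is exactly where lattice symmetry, reality, reflection positivity with an involutive twist, and Haag duality must all be used in an essential way.
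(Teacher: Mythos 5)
Your first two steps do track the paper: the transfer operator you build by twisted Osterwalder--Schrader reconstruction is the paper's $T$ on the KMS Hilbert space $\clk_0^{1\over 2}$ (Sections 2--3 produce it through Popescu elements on the support projection, and Theorem 3.5 shows that reflection positivity with the involutive twist forces the hidden unitary $v_{g_0}$ to equal $1$, which is what makes $T$ self-adjoint --- note the paper only obtains self-adjointness, not positivity, which is why it works throughout with even powers $(T-|I_{\clk}\rangle\rangle\langle\langle I_{\clk}|)^{2k}$), and your equivalence ``exponential decay $\iff$ spectral gap of $T$'' is exactly Proposition 4.3.

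The genuine gap is in your third step. The split property is not a statement about two-point functions $\omega(Q_1\theta^{n}(Q_2))$ with $Q_1,Q_2$ local of controlled norm; it is the uniform estimate (3) over \emph{all} $Q\in\clb_{\Lambda_m^c}$ of norm at most $1$, i.e.\ over the full non-product algebra supported on both half-lines. A general $Q\in\clb_{[-n+1,n]}$ has $d^{4n}$ coefficients in the basis $\beta_{g_0}(\tilde{S}_{I'}\tilde{S}^*_{J'})S_IS^*_J$, and summing two-point estimates term by term gives a bound that grows with $n$ rather than one uniform in $\|Q\|$. The paper's essential device, for which your proposal has no substitute, is the positivity decomposition of Propositions 4.1--4.2: for $Q\ge 0$ one writes $\hat{q}=\hat{b}^*\hat{b}$, obtains $PQP=\sum_{K,K'}\clj_v x_{K,K'}\clj_v x_{K,K'}$ with $x_{K,K'}\in\clm_0$, and hence $\omega(\hat{\theta}_{2k}(Q))-\omega_L\otimes\omega_R(\hat{\theta}_{2k}(Q))=\sum_{K,K'}\langle\langle x_{K,K'},(T-|I_{\clk}\rangle\rangle\langle\langle I_{\clk}|)^{2k}x_{K,K'}\rangle\rangle\le\alpha^{2k}\,\omega(Q)\le\alpha^{2k}\|Q\|$, which is the required uniform bound. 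Your alternative --- manufacturing a minimal projection in $\clm$ from the spectral projection of $T$ --- does not work as stated: that spectral projection lives on the reconstruction space $\clk_0^{1\over 2}$, not in the von Neumann algebra, and you give no mechanism for transporting it into $\clm$. The converse is also broken: from ``type I with attracting faithful fixed point'' you infer a spectral gap, but strong convergence $T^{n}\to P$ (which is all that purity and Powers' factor criterion yield) does not imply norm convergence. The paper instead derives $\|(T-|I_{\clk}\rangle\rangle\langle\langle I_{\clk}|)^{2k}\|\to 0$ directly from the uniform clustering in the definition (3) of split, using that the vectors $\{x_b:\|b\|\le 1\}$ are dense in the unit ball of $l^2(\clk_0^{1\over 2})$, and only then invokes Proposition 2.3 of [Ma3] to identify the split property with $\pi_{\omega}(\clb_R)''$ being a type-I factor.
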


\vsp 
Theorem 1.3, in particular,  also gives a sharper estimate for two-point correlation function which in particular,  makes it worth in the context of non-commutative version of {\it central limit theorem} for stationary states proved in [GV1], [GV2], [GV3] and [Ma5] in various degrees of generalities. In particular,  Theorem 1.2 in [Ma5] now says that the central limit theorem holds [Ma5] for a pure lattice reflection-symmetric translation-invariant real ( with respect to a basis $(e_i)$ of $\IC^d$ ) state $\omega$ of $\clb$, if $\omega$ is split.

\vsp 
The paper is organized as follows: In section 2, we recall basic results on Cuntz algebra $\clo_d$ and canonical amalgamated representation $\pi$ of $\tilde{\clo}_d \otimes \clo_d$ associated with a state $\psi$ of $\clo_d$ which extend GNS representation of $\clb \equiv \tilde{\mbox{UHF}}_d \otimes \mbox{UHF}_d$ associated with the state $\omega$. In section 3 we investigate the amalgamated representation $\pi$ with additional symmetries, in particular, when $\omega$ is pure. In section 4 we explore the representation $\pi$ to prove Theorem 1.3. The last section briefly reviews some well known spin chain models and illustrates implication of our main result.

\section{Mathematical Preliminaries} 

\vsp 
For the last few decades, a translation-invariant state of $\clb$ had been studied extensively in the mathematical literature, either in the framework of quantum Markov states [Ac], [FNW1], [FNW2], [FNW3] or in the frame work of representation theory of $C^*$ algebras  [Pow], [Cu], [BJ], [BJP] and [BJKW]. Our investigation in [Mo1] and [Mo2] had clubbed these two frameworks into an unified Kolmogorov's dilation theory, where inductive limit states [Sa[ are visualized in the frame work of
Kolmogorov's consistency theorem for stationary Markov processes. In this section, however, we give the basic ideas that are involved in the proof of Theorem 1.3 after recalling some known results from [BJKW], [Mo1] and [Mo2] for our present purpose.  

\vsp
First we recall that the Cuntz algebra $\clo_d ( d \in \{2,3,.., \} )$ [Cun] is the universal unital $C^*$-algebra generated by the elements $\{s_1,s_2,...,
s_d \}$ subject to the following relations:
\be 
s_i^*s_j = \delta^i_j I,\;\;\sum_{1 \le i \le d } s_is^*_i=I
\ee

\vsp
Let $\IZ_d=\{1,2,3,...,d\}$ be a set of $d$ elements. $\cli$ be the set of finite sequences
$I=(i_1,i_2,...,i_m)$ of elements, where $i_k \in \IZ_d$ and $m \ge 1$ and we use notation 
$|I|$ for the cardinality of $I$. We also include null set denoted by $\emptyset$ in the collection $\cli$ and set $s_{\emptyset }=s^*_{\emptyset}=I$ identity of $\clo_d$ and $s_{I}=s_{i_1}......s_{i_m} \in \clo_d $ and $s^*_{I}=s^*_{i_m}...s^*_{i_1} \in \clo_d$. 

\vsp
The group $U_d(\IC)$ of $d \times d$ unitary matrices acts canonically on $\clo_d$ as follows:
$$\beta_g(s_i)=\sum_{1 \le j \le d}\overline{g^j_i}s_j$$
for $g=((g^i_j) \in U_d(\IC)$. In particular,  the gauge action is defined by
$$\beta_z(s_i)=zs_i,\;\;z \in \IT = S^1= \{z \in \IC: |z|=1 \}.$$
The fixed point sub-algebra of $\clo_d$ under the gauge action i.e., 
$\{x \in \clo_d: \beta_z(x)=x,\;z \in S^1 \}$ is the closure of 
the linear span of all Wick ordered monomials of the form
$$s_{i_1}...s_{i_k}s^*_{j_k}...s^*_{j_1}:\;I=(i_1,..,i_k),J=(j_1,j_2,..,j_k)$$
and is isomorphic to the uniformly hyper-finite $C^*$ subalgebra
$$\clb_R =\otimes_{1 \le k < \infty}\!M^{(k)}_d(\IC)$$
of $\clb$, where the isomorphism carries the Wick ordered monomial above 
into the matrix element
$$|e^{i_1}\rangle \langle e_{j_1}|^{(1)} \otimes |e^{i_2}\rangle\langle e_{j_2}|^{(2)} \otimes....\otimes |e^{i_k}\rangle\langle e_{j_k}|^{(k)} \otimes 1 \otimes 1 ....$$
We use notation $\mbox{UHF}_d$ for the fix point $C^*$ sub-algebra of $\clo_d$ under the gauge group action 
$(\beta_z:z \in S^1)$. The restriction of $\beta_g$ to $\mbox{UHF}_d$ is then carried into action
$$Ad(g)\otimes Ad(g) \otimes Ad(g) \otimes ....$$
on $\clb_R$.

\vsp
We also define the canonical endomorphism $\lambda$ on $\clo_d$ by
\be 
\lambda(x)=\sum_{1 \le i \le d}s_ixs^*_i
\ee
and the isomorphism carries $\lambda$ restricted to $\mbox{UHF}_d$ into the one-sided shift
$$y_1 \otimes y_2 \otimes ... \raro 1 \otimes y_1 \otimes y_2 ....$$
on $\clb_R$. We note for all $g \in U_d(\IC)$ that $\lambda \beta_g = \beta_g \lambda$ 
on $\clo_d$ and so in particular,  also on $\mbox{UHF}_d$. 

\vsp 
A family $(v_k:1 \le k \le d)$ of contractive operators on a Hilbert space $\clk$ is called {\it Popescu's elements} [Po], if 
\be 
\sum_k v_kv_k^*=I_{\clk}
\ee
For a family of Popescu's elements $(v_k:1 \le k \le d)$ on a Hilbert space $\clk$, we define a unital completely positive map $\tau$ on $\clb(\clk)$ by 
\be 
\tau(x)=\sum_k v_k x v_k^*,\;x \in \clb(\clk)
\ee
and $\tau$-invariant elements $\clb_{\tau}(\clk)$ in $\clb(\clk)$ by 
\be 
\clb(\clk)_{\tau}=\{ x \in \clb(\clk): \tau(x)=x \}
\ee 
We also note that the group action $(\beta_g)$ of $U_d(\IC)$ on the collection of Popescu's elements $(v_i)$ defined by 
$$\beta_g(v_i)= \sum_{1 \le j \le d}\overline{g^j_i}v_j,\;\;1 \le j \le d$$
keeps $\clb(\clk)_{\tau}$ unperturbed. 

\vsp 
We recall Proposition 2.4 in [Mo2] with little more details in the following proposition without proof.  

\vsp 
\begin{pro} 
Let $(\clh,\pi,\Omega)$ be the GNS representation of a $\lambda$ invariant state 
$\psi$ on $\clo_d$ and $P$ be the support projection of the normal state $\psi_{\Omega}(X)=\langle\Omega,X\Omega\rangle$ in the 
von-Neumann algebra $\pi(\clo_d)''$. Then the following holds:

\vsp
\NI (a) $P$ is a sub-harmonic projection for the endomorphism $\Lambda(X)=\sum_k S_kXS^*_k$ on $\pi(\clo_d)''$
i.e. $\Lambda(P) \ge P$ satisfying the following:

\NI (i) $\Lambda_n(P) \uparrow I$ as $n \uparrow \infty$;

\NI (ii) $PS^*_kP=S^*_kP,\;\;1 \le k \le d$;

\NI (iii) $\sum_{1 \le k \le d} v_kv_k^*=I;$ 

where $S_k=\pi(s_k)$ and $v_k=PS_kP$ for $1 \le k \le d$;

\vsp
\NI (b) For any $I=(i_1,i_2,...,i_k),J=(j_1,j_2,...,j_l)$ with $|I|,|J| < \infty$ we have $\psi(s_Is^*_J) =
\langle\Omega,v_Iv^*_J\Omega\rangle$ and the vectors $\{ S_If: f \in \clk,\;|I| < \infty \}$ are total in $\clh$;

\vsp
\NI (c) The von-Neumann algebra $\clm=P\pi(\clo_d)''P$, acting on the Hilbert space
$\clk$ i.e. range of $P$, is generated by $\{v_k,v^*_k:1 \le k \le d \}''$ and the normal state
$\phi(x)=\langle\Omega,x \Omega\rangle$ is faithful on the von-Neumann algebra $\clm$.

\NI (d) The self-adjoint part of the commutant of $\pi(\clo_d)'$ is norm and order isomorphic to the space
of self-adjoint fixed points of the completely positive map $\tau$. The isomorphism takes $X' \in \pi(\clo_d)'$
onto $PX'P \in \clb_{\tau}(\clk)$. Furthermore, 
$\clm' = \clb_{\tau}(\clk)$.

\vsp
\NI (e) The following statements are equivalent:

\NI (i) $\psi$ is a factor state;

\NI (i) $\clm$ is a factor.

\vsp
Conversely, let $v_1,v_2,...,v_d$ be a family of bounded operators on a Hilbert space
$\clk$ so that $\sum_{1 \le k \le d} v_kv_k^*=I$. Then there exists a unique up to
unitary isomorphism Hilbert space $\clh$, a projection operator $P$ on $\clh$ with range equal to $\clk$ 
and a family of isometries
$\{S_k:,\;1 \le k \le d \}$ satisfying Cuntz's relation (10) so that
\be
PS^{*}_{k}P=S_k^*P=v^*_k
\ee
for all $1 \le k \le d$ and $\clk$ is cyclic for the representation i.e. the vectors
$\{ S_I\clk: |I| < \infty \}$ are total in $\clh$.

Moreover the following holds:

\NI (i) $\Lambda_n(P) \uparrow I$ as $n \uparrow \infty$;

\NI (ii) For any $D \in \clb_{\tau}(\clk)$, $\Lambda_n(D) \raro X'$ weakly as $n \raro \infty$
for some $X'$ in the commutant $\{S_k,S^*_k: 1 \le k \le d \}'$ so that $PX'P=D$. Moreover
the self adjoint elements in the commutant $\{S_k,S^*_k: 1 \le k \le d \}'$ is isometrically
order isomorphic with the self adjoint elements in $\clb_{\tau}(\clk)$ via the
surjective map $X' \raro PX'P$. 

\NI (iii) $\{v_k,v^*_k,\;1 \le k \le d \}' \subseteq \clb_{\tau}(\clk)$ and equality holds,  if and only if
$P \in \{S_k,S_k,\;1 \le k \le d \}''$.

\vsp
\NI (iv) Let $\clm$ be a von-Neumann algebra generated by the family $\{v_k: 1 \le k \le d \}$ of operators on Hilbert space 
$\clk$ and $\clm'=\clb_{\tau}(\clk)$. Then for any  $\tau$-invariant faithful normal state $\phi$ on $\clm$ there exists a 
$\lambda$-invariant state $\psi$ on $\clo_d$ defined by
$$\psi(s_Is^*_J)=\phi(v_Iv^*_J),\;|I|,|J| < \infty $$
so that its GNS space associated with $(\clm,\phi)$ is identified with the support projection of $\psi$ in $\pi(\clo_d)''$, where $(\clh,\pi,\Omega)$ is the GNS space of $(\clo_d,\psi)$. 

\vsp
Furthermore,  for a given $\lambda$-invariant state $\psi$, the family $(\clk,\clm,v_k\;1 \le k \le d,\phi)$ satisfying (iv) 
is determined uniquely up to unitary conjugation. 

\end{pro}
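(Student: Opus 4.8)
\emph{Proof sketch.} Write $\cln := \pi(\clo_d)''$ and $\Lambda(X) = \sum_{1 \le k \le d} S_k X S_k^*$, an injective unital normal $*$-endomorphism of $\cln$ (multiplicativity and unitality are the two Cuntz relations). The whole statement is the ``Popescu dilation dictionary'' between $\lambda$-invariant states of $\clo_d$ and families of Popescu elements, and the plan is to run both implications through a single device: the support projection $P$ of the vacuum state, together with the fact that compressing by $P$ turns the $S_k$ into the $v_k$ multiplicatively. For the forward direction, one first notes that $\lambda$-invariance of $\psi$, transported to the GNS picture and extended by normality, says exactly $\psi_\Omega \circ \Lambda = \psi_\Omega$ on $\cln$; since $\Lambda(P)$ is a projection with $\psi_\Omega(\Lambda(P)) = \psi_\Omega(P) = 1$, minimality of the support projection forces $\Lambda(P) \ge P$, so $P$ is subharmonic. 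For (i), the increasing net $\Lambda_n(P)$ has supremum $Q$ with $\Lambda(Q) = Q$, hence $S_j^* Q S_j = Q$ for each $j$; since $S_j^*$ strictly decreases the norm of a vector lying outside its range, this forces $Q S_j = S_j Q$, so $Q$ is central, and $Q \ge P$ gives $Q\Omega = \Omega$, whence $Q = I$ by cyclicity of $\Omega$. Compressing $\Lambda(P) \ge P$ by $P$ gives $P\Lambda(P)P = P$, i.e. $\sum_k v_k v_k^* = I_\clk$, which is (iii); comparing $\sum_k \|P S_k^* P \xi\|^2$ with $\sum_k \|S_k^* P \xi\|^2 = \|\xi\|^2$ for $\xi \in \clk$ shows both equal $\|\xi\|^2$, forcing $(I-P) S_k^* P = 0$, which is (ii).

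From (ii) one gets $P S_k (I-P) = 0$, and iterating shows $P(\cdot)P$ is multiplicative on Wick words: $P S_I P = v_I$ and $P S_I S_J^* P = v_I v_J^*$ on $\clk$. With $P\Omega = \Omega$ this yields (b), and totality of $\{S_I f : f\in\clk\}$ is just $\Lambda_n(P)\uparrow I$ rewritten, since $\Lambda_n(P) = \sum_{|I| = n} S_I P S_I^*$ projects onto $\sum_{|I|=n} S_I \clk$. For (c), $\clm := P\cln P$ is a von Neumann algebra on $\clk$ (reduction of $\cln$ by a projection it contains), every $*$-monomial in the $S_k$ compresses to the corresponding $*$-monomial in the $v_k$, so normality of the compression gives $\clm = \{v_k, v_k^* : 1\le k\le d\}''$, and $P$ being the support of $\psi_\Omega$ is faithfulness of $\phi(x) = \langle\Omega, x\Omega\rangle$ on $\clm$. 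For (d), one checks that $X' \mapsto P X' P$ carries $\cln' = \pi(\clo_d)'$ into $\clb_\tau(\clk)$ and is $*$-multiplicative there (both via $X'$ commuting with $P \in \cln$), is injective ($X'P = 0$ gives $X' S_I P = S_I X' P = 0$, hence $X' = 0$ by $\Lambda_n(P)\uparrow I$), and is inverted on $\clb_\tau(\clk)$ by $D \mapsto \lim_n \Lambda_n(D)$ in the weak operator topology; thus it is an order isomorphism of self-adjoint parts, in fact a $*$-isomorphism $\cln' \cong \clb_\tau(\clk) = \clm'$, from which (e) is immediate since this isomorphism carries the center of $\cln$ onto the center of $\clm$.

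For the converse, given Popescu elements $(v_k)$ on $\clk$ with $\sum_k v_k v_k^* = I_\clk$, realize $(\clh, P, \{S_k\})$ as the minimal Cuntz dilation of the row co-isometry $[v_1\ \cdots\ v_d]$: let $\clh$ be the Hilbert-space inductive limit of $\clk \hookrightarrow \clk^{\oplus d} \hookrightarrow \clk^{\oplus d^2} \hookrightarrow \cdots$ along the iterated isometries $\xi \mapsto (v_k^* \xi)_{1\le k\le d}$, so $\clh \cong \clh^{\oplus d}$ canonically, let $S_k$ be the $k$-th coordinate isometry of that identification and $P$ the projection onto the first stage $\clk$; then the Cuntz relations, $S_k^* P = v_k^*$, and cyclicity of $\clk$ hold by construction, giving (i). Uniqueness up to unitary holds because, for two such dilations $\{S_k\}, \{S'_k\}$, the inner products $\langle S_I f, S_J g\rangle$ with $f, g \in \clk$ reduce by (ii) to expressions in the $v_k$ alone, so $S_I f \mapsto S'_I f$ extends to an intertwining unitary. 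Properties (ii) and (iii) are then read off as in the forward argument; and for (iv), the prescription $\psi(s_I s_J^*) = \phi(v_I v_J^*)$ defines a $\lambda$-invariant state of $\clo_d$ (positivity and invariance by the computations above), faithfulness of $\phi$ matches its GNS support projection with $\clk$, and the last uniqueness assertion combines uniqueness of the GNS construction with the dilation uniqueness just proved.

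The one genuinely non-routine step is the surjectivity half of (d): that $D \mapsto \lim_n \Lambda_n(D)$ is well defined in the weak operator topology, lands in $\cln'$, and is a two-sided inverse to the compression. This is a mean-ergodic (Poisson-boundary type) argument for $\Lambda$ relative to the subharmonic projection $P$, the weak limit being controlled by the uniform bound $\|\Lambda_n(D)\| \le \|D\|$ together with testing against the total family $\{S_I f : f\in\clk\}$ supplied by (b). The only other point requiring care is making the inductive-limit dilation in the converse genuinely minimal with its universal property; everything else is bookkeeping with the Cuntz relations and the identity $P\Omega = \Omega$.
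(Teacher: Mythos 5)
The paper does not prove this proposition at all --- it is stated ``without proof'' as a recollection of Proposition 2.4 in [Mo2], which in turn packages Popescu's dilation theorem and Sections 5--6 of [BJKW]. Your sketch is a correct reconstruction of exactly that standard argument (support projection subharmonic under $\Lambda$, $\Lambda_n(P)\uparrow I$ via centrality of the limit, multiplicativity of compression by $P$ from $(I-P)S_k^*P=0$, the order isomorphism $X'\mapsto PX'P$ inverted by the mean-ergodic limit $\lim_n\Lambda_n(D)$, and the minimal row-coisometry dilation for the converse), so it follows essentially the same route as the cited sources; the only places you gloss are routine --- e.g.\ in (e) one should note that for $D\in Z(\clm)$ the limit $\lim_n\Lambda_n(D)$ lies in $\pi(\clo_d)''$ as well as its commutant, and in (iv) the hypothesis $\clm'=\clb_\tau(\clk)$ is what forces $P$ to be the support projection via part (iii).
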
 

\vsp 
Our next two propositions are adapted from results in section 6 and section 7 of [BJKW] as stated in the present form in Proposition 2.5 and Proposition 2.6 in [Mo2]. 

\vsp 
\begin{pro} 
Let $\psi$ be a $\lambda$ invariant factor state on $\clo_d$ and $(\clh,\pi,\Omega)$
be its GNS representation. Then the following holds:

\NI (a) The closed subgroup $H=\{z \in S^1: \psi \beta_z =\psi \}$ is equal to 

$$\{z \in S^1: \beta_z \mbox{extends to an automorphism of } \pi(\clo_d)'' \} $$ 

\NI (b) Let $\clo_d^{H}$ be the fixed point sub-algebra in $\clo_d$ under the gauge group $\{ \beta_z: z \in H \}$. Then  
$\pi(\clo_d^{H})'' = \pi(\mbox{UHF}_d)''$.

\NI (c) If $H$ is a finite cyclic group of $k$ many elements and $\pi(\mbox{UHF}_d)''$ is a factor, 
then $\pi(\clo_d)'' \bigcap \pi(\mbox{UHF}_d)' \equiv \IC^m$,  where $1 \le m \le k$.  
\end{pro}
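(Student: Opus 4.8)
Write $M=\pi(\clo_d)''$ and $S_k=\pi(s_k)$, and let $P$ be the support projection of the vector state $\psi_\Omega$ in $M$, with the associated Popescu data $(\clk,\clm,(v_k),\phi)$ furnished by Proposition~2.1; recall in particular that $\Lambda^n(P)\uparrow I$, $PS_k^*P=S_k^*P$, $\clm=PMP=\{v_k,v_k^*\}''$, and that $\phi=\psi_\Omega|_\clm$ is a faithful $\tau$-invariant normal state. The plan is to prove (a), (b), (c) in turn, deducing (c) from (b); the argument follows the pattern of Sections~6 and 7 of [BJKW]. For (a), the inclusion of $H$ in the extension set is soft: if $\psi\beta_z=\psi$, uniqueness of the GNS triple produces a unitary $U_z$ on $\clh$ with $U_z\Omega=\Omega$ and $U_z\pi(x)U_z^*=\pi(\beta_z(x))$, so $Ad\,U_z$ restricts to an automorphism of $M$ extending $\beta_z$, and $z\mapsto U_z$ is a strongly continuous unitary representation of $H$. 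For the converse I would argue as follows. If $\beta_z$ extends to $\tilde\beta_z\in\mbox{Aut}(M)$, then since $\lambda\beta_z=\beta_z\lambda$ the extension commutes with $\Lambda$, and since $\tilde\beta_z(S_k)=zS_k$ it exhibits $\psi\beta_z=(\psi_\Omega\circ\tilde\beta_z)\circ\pi$ as a normal state of the representation $\pi$; thus $\psi$ and $\psi\beta_z$ are quasi-equivalent $\lambda$-invariant factor states. The decisive input is that such states must in fact coincide: a $\lambda$-invariant factor state is $\lambda$-ergodic, so distinct members of the orbit $\{\psi\beta_w:w\in S^1\}$ are mutually singular, whence quasi-equivalence forces $\psi\beta_z=\psi$, i.e.\ $z\in H$. (Equivalently, one reaches this from $\psi\beta_z(s_Is_J^*)=z^{|I|-|J|}\psi(s_Is_J^*)$ together with the uniqueness, up to unitary conjugacy, of the Popescu data attached to a $\lambda$-invariant state in Proposition~2.1.)

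For (b), one inclusion is immediate: $\mbox{UHF}_d=\clo_d^{S^1}\subseteq\clo_d^H$ gives $\pi(\mbox{UHF}_d)''\subseteq\pi(\clo_d^H)''$ for any $H$. So everything is in the reverse inclusion, where I may assume $H\neq S^1$, hence $H=\langle\zeta\rangle$ is a finite cyclic group, say with $\zeta^k=1$. By (a) and GNS uniqueness there is a unitary $U$ on $\clh$ with $U^k=I$, $U\Omega=\Omega$ and $U\pi(x)U^*=\pi(\beta_\zeta(x))$. Then $E={1\over k}\sum_{j=0}^{k-1}Ad\,U^j$ is a faithful normal conditional expectation of $M$ onto $M^{\IZ_k}=M\cap\{U\}'$, and it sends $\pi(y)$ to $\pi\!\left({1\over k}\sum_{j}\beta_{\zeta^j}(y)\right)=\pi(E_H(y))$, where $E_H:\clo_d\to\clo_d^H$ is averaging over the finite group $H$; since $E$ is normal this gives $M^{\IZ_k}=E(M)=\overline{\pi(\clo_d^H)}^{\,w}=\pi(\clo_d^H)''$. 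Because $\clo_d^H$ is generated, as a $C^*$-algebra, by $\mbox{UHF}_d$ together with the $d^k$ isometries $s_I$ with $|I|=k$, it remains only to show $S_I\in\pi(\mbox{UHF}_d)''$ whenever $|I|=k$. This I would establish using the Popescu dilation of Proposition~2.1 — the reconstruction of $M$ from $\clm$ via $\Lambda^n(P)\uparrow I$, the identities $PS_I=v_I$ and $S_I^*P=v_I^*$, and the faithfulness of $\phi$ — exactly as in Section~6 of [BJKW]. Absorbing these degree-$k$ generators into $\pi(\mbox{UHF}_d)''$ is where I expect the real work to lie.

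For (c), assume $H=\IZ_k$ is finite and $\pi(\mbox{UHF}_d)''$ is a factor. By (b) we have $\pi(\mbox{UHF}_d)''=\pi(\clo_d^H)''=M^{\IZ_k}$, the fixed-point algebra of the $\IZ_k$-action $j\mapsto Ad\,U^j$ on $M$. Decompose $M=\bigoplus_{j=0}^{k-1}M^{(j)}$ into the spectral subspaces $M^{(j)}=\{X\in M:UXU^*=\zeta^jX\}$, so that $M^{(0)}=\pi(\mbox{UHF}_d)''$, $M^{(i)}M^{(j)}\subseteq M^{(i+j)}$ and $(M^{(j)})^*=M^{(-j)}$. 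An element $X\in\pi(\mbox{UHF}_d)'\cap M$ then decomposes as $X=\sum_jX^{(j)}$ with each $X^{(j)}\in M^{(j)}\cap(M^{(0)})'$, since the spectral projections are averages of the $Ad\,U^j$ and hence preserve $(M^{(0)})'$ (as $U$ commutes with $\pi(\mbox{UHF}_d)$). If $0\neq Y\in M^{(j)}\cap(M^{(0)})'$, then both $Y^*Y$ and $YY^*$ lie in $M^{(0)}\cap(M^{(0)})'=\IC$ — this is where the factoriality of $\pi(\mbox{UHF}_d)''$ enters — so $Y$ is a scalar multiple of a unitary; hence any two nonzero elements of $M^{(j)}\cap(M^{(0)})'$ are proportional, i.e.\ $\dim\big(M^{(j)}\cap(M^{(0)})'\big)\le 1$. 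Choosing a unitary $u_j$ whenever this space is nonzero, the set $N=\{j:u_j\neq 0\}$ is a subgroup of $\IZ_k$, necessarily cyclic of some order $m$ dividing $k$; since $H^2(N,\IT)$ is trivial the $u_j$ may be rescaled into a genuine unitary representation of $N$, so $\pi(\mbox{UHF}_d)'\cap M\cong\IC[N]\cong\IC^m$ with $1\le m\le k$, which is the assertion.
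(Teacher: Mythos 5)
A preliminary remark: the paper does not actually prove Proposition~2.2 --- it is imported from sections 6 and 7 of [BJKW] (as restated in Propositions~2.5--2.6 of [Mo2]) --- so your attempt has to be measured against those arguments rather than against a proof in the text. Your part (c) is correct and is essentially the standard argument: decompose $M=\pi(\clo_d)''$ into the spectral subspaces $M^{(j)}$ of the implemented $\IZ_k$-action, note that $M^{(j)}\cap(M^{(0)})'$ is at most one-dimensional and spanned by a unitary because $Y^*Y,\,YY^*\in M^{(0)}\cap(M^{(0)})'=\IC$ (this is exactly where factoriality of $\pi(\mbox{UHF}_d)''$ enters), and identify the relative commutant with the group algebra of a subgroup of $\IZ_k$. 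The forward inclusion in (a) via GNS uniqueness is also fine.

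The genuine gaps are in the converse of (a) and in (b), and in both cases they sit precisely where the content of the proposition lies. In (a) you correctly reduce the converse to: ``$\psi$ and $\psi\beta_z$ are quasi-equivalent $\lambda$-invariant factor states, hence equal,'' but the justification offered --- ``a $\lambda$-invariant factor state is $\lambda$-ergodic, so distinct members of the gauge orbit are mutually singular'' --- does not stand. Quasi-equivalent factor states need not coincide (distinct vector states of $\clb(\clh)$ already show this), and the standard mechanism upgrading extremality of invariant states to pairwise disjointness requires some form of asymptotic abelianness; for $(\clo_d,\lambda)$ one computes $\lambda^n(x)s_j-s_j\lambda^n(x)=s_j\bigl(\lambda^{n-1}(x)-\lambda^n(x)\bigr)$, whose norm equals the constant $\|x-\lambda(x)\|$, so norm asymptotic abelianness fails; and even the premise that a $\lambda$-invariant factor state is extremal among $\lambda$-invariant states is not argued. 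The parenthetical appeal to uniqueness of the Popescu data does not close this either, since that uniqueness (Proposition~2.1) is for the data of a single state; relating the data of $\psi$ and $\psi\beta_z$ through the assumed extension $\hat\beta_z$ is exactly the support-projection argument by which [BJKW] actually proves this implication. In (b) you correctly reduce the nontrivial inclusion to showing $S_I\in\pi(\mbox{UHF}_d)''$ for $|I|=k$, but then explicitly defer that step (``where I expect the real work to lie''); since that step is the whole content of (b), and (c) is built on (b), the proposal as written establishes only (c) and the easy half of (a).
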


\vsp 
Let $\psi$ be a $\lambda$-invariant factor state of $\clo_d$ as in Proposition 2.2. Let 
$z \raro U_z$ be the unitary representation of $H$ in the GNS 
space $(\clh,\pi,\Omega)$ associated with the state $\psi$ of 
$\clo_d$ defined by 
\be 
U_z\pi(x)\Omega=\pi(\beta_z(x))\Omega
\ee
so that $\pi(\beta_z(x))=U_z\pi(x)U_z^*$ for $x \in \clo_d$. We use same notation $(\beta_z:z \in H)$ for its normal extension as automorphism on $\pi(\clo_d)''$. 
Furthermore,  $\langle \Omega,P\beta_z(I-P)P \Omega \rangle=0$ since $\psi=\psi \beta_z$ for 
$z \in H$. $P$ being the support projection of $\psi$ in $\pi(\clo_d)''$, we have $P\beta_z(I-P)P=0$ i.e. $\beta_z(P) \ge P$ for all $z \in H$. Since $H$ is group, 
we conclude that $\beta_z(P)=P$ i.e. $PU_z=U_zP$ for all $z \in H$. Thus by Proposition 
2.2 (b), $P \in \pi(\mbox{UHF}_d)''$. We define von-Neumann subalgebra $\clm_0$ of $\clm$ by 
\be 
\clm_0=P\pi(\mbox{UHF}_d)''P
\ee
i.e. $\clm_0$ is weak$^*$ closure of vector space $\{v_Iv_J^*:|I|=|J|\}$ by Proposition 2.1 (a). Let $\clk_0$ be the Hilbert subspace of $\clk$ equal to the range of $[\clm_0\Omega]$. 
Then $\clm_0$ can be realized as a von-Neumann subalgebra of $\clb(\clk_0)$, however 
its commutant in $\clb(\clk_0)$ could be different from commutant $\clm_0'$ taken in $\clb(\clk)$. 

\vsp 
Since endomorphism $\Lambda(X)=\sum_k \pi(s_k)X\pi(s_k^*)$ preserves $\pi(\mbox{UHF}_d)''$
$\tau$ also preserves $\clm_0$. Let $\phi_0$ be the restriction of $\phi$ to $\clm_0$. Thus $(\clm_0,\tau^n,\;n \ge 1, \phi_0)$ is a quantum dynamical system [Mo1] of a completely positive map $\tau$ on $\clm_0$ with a faithful normal invariant state $\phi_0$.    

\vsp 
We consider now the group of automorphism $(\beta_z:z \in H)$ on $\clb(\clk)$ defined by
$\beta_z(a)=u_zau_z^*$, where $z \raro u_z=PU_zP$ is the unitary representation 
of $H$ in $\clk$. Let 
\be
u_z=\sum_{k \in \hat{H}} z^k P_k
\ee 
be 
Stone-Naimark-Ambrose-Godement (SNAG) decomposition [Mac49] of the unitary representation 
$z \raro u_z$ into its dual group $\hat{H}$. So we have 
$$P_k=[v_Iv_J^*\Omega: |I|-|J|=k]$$ for $k \in \hat{H}$ and in particular,  
$$P_0=[\clm_0\Omega]$$
Furthermore,  a bounded linear or anti-linear operator $a$ on $\clk$ keeps each subspace 
$P_k$ invariant for $k \in \hat{H}$,  if and only if $\beta_z(a)=u_zau_z^*=a$ for all $z \in H$.  

\vsp
Let $\omega'$ be a $\lambda$-invariant state on the $\mbox{UHF}_d$ sub-algebra of $\clo_d$. Following [BJKW, section 7] and $\omega$ be the inductive limit state $\omega$ of $\clb \equiv \tilde{\mbox{UHF}}_d \otimes \mbox{UHF}_d$. In other words $\omega'=\omega_R$ once we make the identification $\mbox{UHF}_d$ with $\clb_R$.
We consider the set 
$$K_{\omega}= \{ \psi: \psi \mbox{ is a state on } \clo_d \mbox{ such that } \psi \lambda =
\psi \mbox{ and } \psi_{|\mbox{UHF}_d} = \omega_R \}$$
By taking invariant mean on an extension of $\omega_R$ to $\clo_d$, we verify that $K_{\omega}$ is non empty and 
$K_{\omega}$ is clearly convex and compact in the weak topology. In case $\omega$ is an ergodic state ( extremal state ) then, $\omega_R$ is as well an extremal state in the set of $\lambda$-invariant states of $\clb$. Thus
$K_{\omega}$ is a face in the $\lambda$ invariant states. Now we recall Lemma 7.4 
of [BJKW] in the following proposition which quantifies what we can gain 
by considering a factor state on $\clo_d$ instead of its restriction to 
$\mbox{UHF}_d$.

\vsp 
\begin{pro} Let $\omega$ be an ergodic state of $\clb$. Then $\psi \in K_{\omega}$ is an extremal point in
$K_{\omega}$,  if and only if $\psi$ is a factor state. Moreover any other extremal point in $K_{\omega}$
is of the form $\psi \beta_z$ for some $z \in S^1$ and 
$H=\{z \in S^1: \psi \beta_z =\psi \}$ is independent of the extremal point $\psi \in K_{\omega}$.   
\end{pro}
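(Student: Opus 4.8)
The statement is Lemma~7.4 of [BJKW]; I sketch how the argument goes in the present notation, using Propositions~2.1--2.2 and the discussion preceding the proposition.

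First I would record two reductions. Since $\omega$ is ergodic, $\omega_R$ is an extreme point of the weak$^*$-compact convex set of $\lambda$-invariant states of $\mbox{UHF}_d$, so $K_\omega$ is a \emph{face} of the set of all $\lambda$-invariant states of $\clo_d$: if $\psi\in K_\omega$ and $\psi=t\psi_1+(1-t)\psi_2$ with $0<t<1$ and $\psi_1,\psi_2$ $\lambda$-invariant, restricting to $\mbox{UHF}_d$ forces $\psi_i|_{\mbox{UHF}_d}=\omega_R$, i.e. $\psi_i\in K_\omega$. Hence $\psi\in K_\omega$ is extreme in $K_\omega$ if and only if it is extreme in the whole set of $\lambda$-invariant states of $\clo_d$. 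Also, the gauge automorphisms $\beta_z$, $z\in S^1$, commute with $\lambda$ and fix $\mbox{UHF}_d$ pointwise, so $\psi\raro\psi\beta_z$ is an affine bijection of $K_\omega$ onto itself and permutes its extreme points.

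Next I would prove that $\psi\in K_\omega$ is extreme if and only if it is a factor state. Fix $\psi\in K_\omega$ with GNS triple $(\clh,\pi,\Omega)$ and pass to the data of Proposition~2.1: the support projection $P$, the Popescu system $v_k=PS_kP$ on $\clk=P\clh$, $\clm=P\pi(\clo_d)''P$ with $\clm'=\clb_\tau(\clk)$, and the faithful normal $\tau$-invariant state $\phi=\langle\Omega,\cdot\,\Omega\rangle$ on $\clm$; by Proposition~2.1(e) we may equivalently decide when $\clm$ is a factor. The key point is that the convex decompositions $\psi=t\psi_1+(1-t)\psi_2$ with $\psi_1,\psi_2\in K_\omega$ (all of them, by the face property) are parametrised by the positive elements $Z$ of the centre of $\pi(\clo_d)''$ that are fixed by $\Lambda(X)=\sum_kS_kXS_k^*$ and act trivially on $\mbox{UHF}_d$, i.e. $\langle\Omega,Z\pi(x)\Omega\rangle=\langle\Omega,Z\Omega\rangle\langle\Omega,\pi(x)\Omega\rangle$ for $x\in\mbox{UHF}_d$: given such a decomposition one writes $\psi_1=\langle\Omega,T\pi(\cdot)\Omega\rangle$ with $0\le T\in\pi(\clo_d)'$, sets $D=PTP\in\clm'_+$, and uses $PS_k^*P=S_k^*P$ to get $\psi_1(s_Is_J^*)=\langle\Omega,D\,v_Iv_J^*\Omega\rangle$ for all finite multi-indices; then $\psi_1|_{\mbox{UHF}_d}=\psi|_{\mbox{UHF}_d}$ says $D$ is trivial against $\clm_0:=P\pi(\mbox{UHF}_d)''P$, and, using that $\omega_R$ is an \emph{extreme} $\lambda$-invariant state of $\mbox{UHF}_d$ (so the compressed system $(\clm_0,\tau,\phi_0)$, $\phi_0=\phi|_{\clm_0}$, is ergodic in the appropriate sense), $\Lambda_n(P)\uparrow I$, the $\lambda$-invariance of $\psi_1$, and — for the forward implication — $P\in\pi(\mbox{UHF}_d)''$, one upgrades this to $D\in\clm\cap\clm'=Z(\clm)$, corresponding under Proposition~2.1(d) to a $\Lambda$-fixed central element of $\pi(\clo_d)''$. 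Conversely, when $\clm$ is not a factor the same correspondence together with an ergodic-mean argument over $\Lambda$ (where extremality of $\omega_R$ again enters) produces a nontrivial such $Z$, hence a proper decomposition of $\psi$ inside $K_\omega$. Therefore $\psi$ is extreme in $K_\omega$ if and only if $Z(\clm)=\IC I$, i.e. if and only if $\psi$ is a factor state.

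Finally, let $\psi$ be an extreme point of $K_\omega$, hence a factor state. Averaging over the gauge group, $\psi_E:=\int_{S^1}\psi\beta_z\,dz$ is gauge-invariant, so $\psi_E=\omega_R\circ E$ where $E:\clo_d\raro\mbox{UHF}_d$ is the gauge conditional expectation; in particular $\psi_E$ is independent of the chosen extreme point. Since the GNS data of any extreme $\psi'\in K_\omega$ are rigidly determined, up to unitary conjugation and a gauge twist, by $\omega_R$ (Proposition~2.1 and Proposition~2.2(b)), one gets $\psi'=\psi\beta_{z_0}$ for some $z_0\in S^1$; equivalently, the probability measures $\int\delta_{\psi\beta_z}\,dz$ and $\int\delta_{\psi'\beta_z}\,dz$ on the extreme points of $K_\omega$ have the same barycentre $\psi_E$ and hence the same gauge-orbit support. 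Since $S^1$ is abelian, $\psi'\beta_z=\psi'$ if and only if $\psi\beta_z=\psi$, so $H=\{z\in S^1:\psi\beta_z=\psi\}$ is the same for every extreme point of $K_\omega$. I expect the main obstacle to be the correspondence in the previous paragraph — pinning the Radon--Nikodym operator of a $K_\omega$-subordinate state into the $\Lambda$-fixed part of the centre of $\pi(\clo_d)''$ — which is precisely where the ergodicity of $\omega_R$ is used in an essential way.
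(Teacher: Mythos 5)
First, note that the paper does not prove this statement at all: Proposition~2.3 is explicitly ``recalled'' as Lemma~7.4 of [BJKW], so there is no in-paper argument to compare against. Your plan correctly identifies the source and reproduces the overall architecture of the [BJKW] proof: the face property of $K_\omega$ coming from extremality of $\omega_R$ among $\lambda$-invariant states of $\mbox{UHF}_d$ (which the paper also records in the paragraph preceding the proposition), the parametrisation of subordinate $\lambda$-invariant states by positive operators in $\pi(\clo_d)'$ compressed to $\clb_\tau(\clk)=\clm'$ via Proposition~2.1(d), and the gauge-averaging identity $\int_{S^1}\psi\beta_z\,dz=\omega_R\circ E$. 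The final observation that $H$ is the same for all extremal points, once transitivity of the gauge orbit is known, is correct and elementary since $S^1$ is abelian.

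Two steps, however, are asserted rather than proved, and the second is a genuine gap in the plan as written. (i) The upgrade of $D=PTP\in\clm'_+$ to a central, $\Lambda$-fixed element is exactly the technical heart of [BJKW, \S7]; you flag it honestly, but ``one upgrades this to $D\in\clm\cap\clm'$'' is the theorem, not a proof of it. (ii) More seriously, the transitivity argument --- ``the measures $\int\delta_{\psi\beta_z}dz$ and $\int\delta_{\psi'\beta_z}dz$ have the same barycentre and hence the same gauge-orbit support'' --- silently assumes that the extremal decomposition of $\omega_R\circ E$ within the $\lambda$-invariant states is unique, i.e.\ a Choquet-simplex property for the $\lambda$-invariant states of $\clo_d$. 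That is not automatic for a single endomorphism and is not established anywhere in your sketch; the alternative justification you offer (``the GNS data of any extreme $\psi'$ are rigidly determined, up to unitary conjugation and a gauge twist, by $\omega_R$'') is essentially a restatement of the conclusion. The actual route in [BJKW] goes through quasi-equivalence: two extremal (hence factor) elements of $K_\omega$ restrict to the same factor state $\omega_R$ of the fixed-point algebra $\mbox{UHF}_d=\clo_d^{S^1}$, their GNS restrictions to $\mbox{UHF}_d$ are quasi-equivalent, and the classification of factor-state extensions of a factor state of the fixed-point algebra under a compact abelian gauge action then forces $\psi'=\psi\beta_{z}$ for some $z$. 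Without some version of that extension/quasi-equivalence argument (or a proof of the simplex property), the ``moreover'' clause does not follow from what you have written.
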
 

\vsp 
In Proposition 2.1 (b) we have taken an arbitrary element $\psi \in K_{\omega}$ to find 
a family of Popescu's elements $\clp=(\clk,v_i \in \clm,1 \le i \le d,\;\Omega)$ in its 
support projection and arrived at a representation of $\omega$ given by
\be 
\omega(|e^{i_1}\rangle \langle e_{j_1}|^{(1)} \otimes |e^{i_2}\rangle\langle e_{j_2}|^{(2)} \otimes....\otimes |e^{i_k}\rangle\langle e_{j_k}|^{(k)} \otimes 1 \otimes 1 ..)=\phi(v_Iv_J^*),
\ee
where $I=(i_1,i_2,..,i_k)$ and $J=(j_1,j_2,..,j_k)$. However,  such a representation need 
not be unique even upto unitary conjugation unless $K_{\omega}$ is a singleton set. Nevertheless by Proposition 2.3 for a factor state $\omega$, two extreme points $\psi$ and $\psi'$ in $K_{\omega}$ being related by $\psi'=\psi \beta_z$ for some $z \in S^1$, Popescu's elements $\clp=\{\clk,v_k:1 \le k \le d,\;\sum_kv_kv_k^*=I_{\clk} \}$ 
and $\clp'=\{\clk',v'_k:1 \le k \le d,\; \sum_k v'_k(v')^*_k=I_{\clk'} \}$ associated with support projections of $\psi$ and $\psi'$ in $\pi_{\psi}(\clo_d)''$ and 
$\pi_{\psi'}(\clo_d)''$ respectively are unitary equivalent modulo a gauge modification i.e. by Proposition 2.1 there exists a unitary operator $u:\clk \raro \clk'$ and $z \in S^1$ so that $uv_ku^*=z v'_k$ for all $1 \le k \le d$. In other words we find a one-one correspondence between 
\be
\omega \Leftrightarrow \omega_R \Leftrightarrow K^{ext}_{\omega} \Leftrightarrow \clp_{ext} \Leftrightarrow (\clm,\tau,\phi) 
\ee  
modulo unitary conjugations and phase factors, where $K^{ext}_{\omega}$ is the set of extreme points in $K_{\omega}$ and $\clp_{ext}$ is the set of Popescu's elements associated with extreme points of $K_{\omega}$ on their support projections 
of the states given as in Proposition 2.1. 
Furthermore,  for the support projection $P=[\pi(\clo_d)'\Omega]$, we have $\beta_z(P)=P$ for all $z \in H$. Hence by Proposition 2.2 (b), we have $P \in \pi(\mbox{UHF}_d)''$. So $\clm_0=P\pi(\clo_d)''P$ is a von-Neumann algebra in its own right and $\clm_0 \subseteq \clm$ and $\tau$ takes elements of $\clm_0$ to itself. Thus $(\clm_0,\tau,\phi_0)$ is a semi-group of unital completely positive maps with a faithful normal invariant state $\phi_0$, the restriction of $\phi$ to $\clm_0$ 
and such a triplet $(\clm_0,\tau,\phi_0)$ is canonically associated with the state $\omega$ modulo unitary conjugation. Thus it is natural to expect that various properties of $\omega$ are related to that of $(\clm_0,\tau,\phi)$. We have already explored 
purity of $\omega$ in [Mo1] to find its precise relation with the asymptotic behaviour of the dynamics $(\clm_0,\tau^n,\phi_0)$ as $n \raro \infty$. Along with $(\clm_0,\tau^n,\phi_0)$, asymptotic behaviour of the {\it dual dynamics } $(\clm'_0,\tilde{\tau}^n,\phi_0)$ ( defined by D Petz [OP] following a work of Accardi-Cecchini [AC] ) also played an important role in our analysis. We now recall the details of it and explain how it is related to symmetry (5) of $\omega$.

\vsp
Since $\phi$ is a faithful state, $\Omega \in \clk$ is a cyclic and separating vector for $\clm$ and the closure of the closable operator $S_0:a\Omega \raro a^*\Omega,\;a \in \clm, S$ possesses a polar decomposition $S=\clj \Delta^{1/2}$, where $\clj$ is an anti-unitary and $\Delta$ is a non-negative self-adjoint operator on $\clk$. Tomita's [BR] theorem says that 
$\Delta^{it} \clm \Delta^{-it}=\clm,\;t \in \IR$ and $\clj \clm \clj=\clm'$, where $\clm'$ is the
commutant of $\clm$. We define the modular automorphism group
$\sigma=(\sigma_t,\;t \in \IT )$ on $\clm$
by
$$\sigma_t(a)=\Delta^{it}a\Delta^{-it}$$ which satisfies the modular relation
$$\phi(a\sigma_{-{i \over 2}}(b))=\phi(\sigma_{{i \over 2}}(b)a)$$
for any two analytic elements $a,b$ for the group of automorphisms $(\sigma_t)$. A more useful modular relation used frequently in this paper is given by 
\be 
\phi(\sigma_{-{i \over 2}}(a^*)^* \sigma_{-{i \over 2}}(b^*))=\phi(b^*a)
\ee 
which shows that $\clj a\Omega= \sigma_{-{i \over 2}}(a^*)\Omega$ for an analytic element $a$ for the automorphism group $(\sigma_t)$. Anti unitary operator $\clj$ and the group of automorphism $\sigma=(\sigma_t,\;t \in \IR)$ are called {\it Tomita's conjugate operator} and {\it modular automorphisms } associated with $\phi$ respectively. 

\vsp 
The state $\phi(a)= \langle \Omega,x \Omega \rangle $ on $\clm$ being faithful and invariant of $\tau:\clm \raro \clm$, we find a unique unital completely positive map 
$\tilde{\tau}:\clm' \raro \clm'$ ([section 8 in [OP] ) satisfying the duality relation 
\be 
\langle b\Omega,\tau(a)\Omega \rangle =  \langle \tilde{\tau}(b)\Omega,a\Omega \rangle 
\ee
for all $a \in \clm$ and $b \in \clm'$. For a proof,  we refer 
to section 8 in the monograph [OP] or section 2 in [Mo1]. 

\vsp 
Since $\tau(a)=\sum_{1 \le k \le d} v_kav_k^*,\;x \in \clm$ is an {\it inner map } i.e. each $v_k \in \clm$, we have an explicit formula for $\tilde{\tau}$ as follows: For 
each $1 \le k \le d$, we set contractive operator 
\be 
\tilde{v}_k = \overline{ \clj \sigma_{i \over 2}(v^*_k) \clj } \in \clm'
\ee 
That $\tilde{v}_k$ is indeed well defined as an element in $\clm'$ given in section 8 in [BJKW]. By the modular relation (17), we have  
\be  
\sum_k \tilde{v}_k \tilde{v}_k^*=I_{\clk}\;\;\mbox{and}\;\;
\tilde{\tau}(b)=\sum_k \tilde{v}_kb\tilde{v}^*_k,\; b \in \clm' 
\ee
Moreover, if $\tilde{I}=(i_n,..,i_2,i_1)$ for $I=(i_1,i_2,...,i_n)$, we have 
$$\tilde{v}^*_I\Omega$$
$$=\clj \sigma_{i \over 2}(v_{\tilde{I}})^*\clj\Omega$$
$$= \clj \Delta^{1 \over 2}v_{\tilde{I}}\Omega$$
$$=v^*_{\tilde{I}}\Omega$$
and    
\be
\phi(v_Iv^*_J)= \phi(\tilde{v}_{\tilde{I}}\tilde{v}^*_{\tilde{J}}),\; |I|,|J| < \infty 
\ee
We also set $\tilde{\clm}$ to be the von-Neumann algebra generated by $\{\tilde{v}_k: 1 \le k \le d \}$. 
Thus $\tilde{\clm} \subseteq \clm'$.

\vsp 
Since $S_0\beta_z(a)\Omega=\beta_z(a^*)\Omega$ for all $a \in \clm$, we have $S_0u_z=u_zS_0$ on $\clm\Omega$. Once again by uniqueness of polar decomposition for $S= \clj \Delta^{1 \over 2}$, we get $u_z \clj u_z^*=\clj$ and $u_z \Delta^{1 \over 2} u_z^*=\Delta^{1 \over 2}$. 
In particular,  $\clj,\Delta^{1 \over 2}$ commutes with $P_k$ for all $k \in \hat{H}$,  where 
$P_k$ are projections defined in (18). Furthermore,  since $E=\int_{z \in H}\beta_z dz$ is a norm one projection ( i.e. a unital completely positive map $E:\clm \raro \clm_0$ satisfying the bi-module property, i.e. $E(zxy)=zE(x)y,\;x \in \clm, z,y \in \clm_0$ ) from $\clm$ to the fixed point von-Neumann sub-algebra $\clm_0$ of $\clm$, the modular group of automorphisms $(\sigma_t)$ keep $\clm_0$ invariant i.e. $\sigma_t(\clm_0)=\clm_0$ 
for all $t \in \IR$ by a Theorem of M. Takesaki [Ta]. Note also that $\clm_0=P\pi(\mbox{UHF}_d)''P$ as a von-Neumann algebra with its cyclic space $\clk_0=[\clm_0 \Omega]$. Thus,  we have von-Neumann algebra $\clm_0$ acting on $\clk_0$ and the map $x \raro \tau(x)$, once restricted to $\clm_0$ admits an adjoint completely positive map which is the restriction of $\tilde{\tau}$ on $\clm'_0$ i.e. 
$$\tilde{\tau}(y)= \sum_{1 \le k \le d} \tilde{v}_ky\tilde{v}_k^*$$ 
for all $y \in \clm_0'$ satisfying the restricted duality relation of (22) 
given below:
$$\langle y\Omega,\tau(x)\Omega \rangle=\langle \tilde{\tau}(y)\Omega,x\Omega\rangle$$
for all $x \in \clm_0,y \in \clm_0'$, where $\clm_0'$ is the commutant of $\clm_0$ in $\clb(\clk_0)$.  

\vsp 
We denote by $\clk^{1 \over 2}$ the Hilbert space completion of the vector space $\clm$ with inner product given by 
$$\langle \langle x,y \rangle \rangle = \langle x \clj y \clj \rangle$$
Then $T:\clk^{1 \over 2} \raro \clk^{1 \over 2}$ is a contractive operator defined by extending the map
\be 
Tx \raro \tau(x),\;x \in \clm
\ee 
and it restriction to $\clm_0$ has a contractive extension 
to its Hilbert space closure denoted by $\clk^{1 \over 2}_0$.   

\vsp 
A non-trivial symmetry of $\omega$ will determine a unique affine map on $K_{\omega}$ and thus taking an extremal element of $K_{\omega}$ to another extremal element of $K_{\omega}$. Since associated Poposecu's elements on support projections of an extremal element are determined uniquely modulo a unitary conjugation, each symmetry will give rises to an undetermined unitary operators intertwining Popescu's elements modulo a gauge group action. Basic strategy here is to find algebraic relation between Cuntz state $\psi$ and associated Popescu's elements $(\clk,v_k:1 \le k \le d)$ in its support projection with 
its dual Cuntz state $\tilde{\psi}$ associated with dual Popescu's elements $(\tilde{v}_k,1 \le k \le d)$. Using symmetries of $\omega$ given in (5) and (6), additional condition of refection positivity (8) ensures that $T:\clk_0^{1 \over 2} \raro \clk_0^{1 \over 2}$ is self-adjoint. Thus the spectrum of $T$ commands on the behaviour of two-point spatial correlations function and also on split property of $\omega$. While studying symmetry (5) of $\omega$, we need equality of the support projections of $\psi$ and $\tilde{\psi}$ in order to find algebraic relations between their Popescu's elements. To that end we recall results from [Mo2] in the next paragraph.      

\vsp 
Let $\tilde{\clo}_d$ be an another copy of Cuntz algebra $\clo_d$ and $\pi$ be the Popescu's prescription [Po] (Theorem 5.1 in [BJKW] or Proposition 2.1 in [Mo2]) 
of a minimal Stinespring representation $\pi: \tilde{\clo}_d \raro \clb(\tilde{\clh})$ associated with the completely positive map $\tilde{s}_I\tilde{s}_J^* \raro \tilde{v}_I\tilde{v}_J^*,\;|I|,|J| < \infty $ so that
$$P\pi(\tilde{s}^*_i)P=\pi(\tilde{s}_i^*)P=\tilde{v}^*_i$$ 
Furthermore,  we have 
a {\it dual} state $\tilde{\psi}$ of $\tilde{\clo}_d$ defined by 
$$\tilde{\psi}(\tilde{s}_I \tilde{s}^*_J)=\langle \Omega,\tilde{S}_I\tilde{S}^*_J \Omega \rangle$$
\be 
=\phi(\tilde{v}_I\tilde{v}^*_J)
\ee 
However, by the converse part of Proposition 2.1, $P:\tilde{\clh} \raro \clk$ is also the support projection 
of $\tilde{\psi}$ in $\pi(\tilde{\clo}_d)''$, if and only if 
$$ 
\{y \in \clb(\clk): \sum_k \tilde{v}_ky\tilde{v}_k^*= y \} = \tilde{\clm}'
$$

\vsp 
Let $\psi$ be a $\lambda$-invariant state on $\clo_d$ and $\tilde{\psi}$ be the state on $\tilde{\clo}_d$, which is another copy of $\clo_d$, defined by 
\be 
\tilde{\psi}(\tilde{s}_I\tilde{s}_J^*)=\psi(s_{\tilde{J}}s_{\tilde{I}}^*)
\ee 
for all $|I|,|J| < \infty$ and $(\clh_{\tilde{\psi}},\pi_{\tilde{\psi}},\Omega_{\tilde{\psi}})$ be the GNS 
space associated with $(\tilde{\clo}_d,\tilde{\psi})$. That $\tilde{\psi}$ is well defined and coincide with our earlier definition of dual state $\tilde{\psi}$ given in (27) 
follows once we check by (25) that $$\psi(s_{\tilde{J}}s_{\tilde{I}}^*)=\phi(v_{\tilde{J}}v_{\tilde{I}}^*) = 
\phi(\tilde{v}_I\tilde{v}_J^*)$$
for all $|I|,|J| < \infty$. 
      
\vsp
Similarly for any translation-invariant state $\omega$ on $\clb$, we define translation-invariant state $\tilde{\omega}$ of $\clb$ by 
$$\tilde{\omega}(Q_{-l}^{(-l)} \otimes Q_{-l+1}^{(-l+1)} \otimes ... \otimes Q_{-1}^{(-1)} \otimes Q_0^{(0)} \otimes Q_1^{(1)} ... \otimes Q_n^{(n)})$$
\be 
= \omega(Q_n^{(-n+1)}... \otimes Q_1^{(0)} \otimes Q_0^{(1)} \otimes Q_{-1}^{(2)} 
\otimes ... Q_{-l+1}^{(l)} \otimes Q_{-l}^{(l+1)})
\ee
for all $n,l \ge 1$ and $Q_{-l},..Q_{-1},Q_0,Q_1,..,Q_n \in 
M_n(\IC)$,  where $Q^{(k)}$ is the matrix $Q$ at lattice point $k$ and extending 
linearly to any $Q \in \clb$. We also note that $\psi \in K_{\omega}$,  if and only if 
$\tilde{\psi} \in K_{\tilde{\omega}}$. 

\vsp 
The map $\psi \raro \tilde{\psi}$ is bijective and affine between the convex sets of $\lambda$ and $\tilde{\lambda}$ invariant states of $\clo_d$ and $\tilde{\clo}_d$ respectively. In particular,  the map $\psi \raro \tilde{\psi}$ takes an element from $K_{\omega}$ to $K_{\tilde{\omega}}$ and the map is once more bijective and affine. 
Hence for any extremal point $\psi \in K_{\omega}$, $\tilde{\psi}$ is also an extremal element in $K_{\tilde{\omega}}$. Using Power's criterion (4) we also verify here that $\omega$ is an extremal point in the convex set of translation-invariant states if 
and only if $\tilde{\omega}$ is an extremal point in the convex set of translation-invariant states. However,  such a conclusion for a pure state $\omega$ is not so obvious. 
We have the following useful proposition stated in full generality keeping in mind its future applications [Mo3]. 

\vsp
\begin{pro}
Let $\omega$ be an extremal translation-invariant state on $\clb$ and $\psi \raro \tilde{\psi}$ be the map defined from $\lambda$ invariant states on $\clo_d$ to $\tilde{\lambda}$ invariant states of $\tilde{\clo}_d$ by (28). Then the following holds:

\NI (a) $\psi \in K_{\omega}$ is a factor state,  if and only if $\tilde{\psi} \in K_{\tilde{\omega}}$ is a factor state. 

\NI (b) $\omega$ is pure, if and only if $\tilde{\omega}$ is pure. 

\NI (c) Let $(\clk,\tilde{\clm},\tilde{v}_k:1 \le k \le d)$ be the dual Popescu's 
elements defined in (23) with associated dual state $\tilde{\psi}$ of $\tilde{\clo}_d$ defined in (28). Then $P$ is the support projection of $\tilde{\psi}$ in its GNS representation, if and only if $\clb(\clk)_{\tilde{\tau}}=\tilde{\clm}'$ 
\end{pro}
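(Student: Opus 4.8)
The plan is to derive all three assertions from the structural results of Section~2 together with the already-established fact that $\psi\mapsto\tilde\psi$ restricts to an affine bijection of $K_\omega$ onto $K_{\tilde\omega}$; no new analytic ingredient is needed, and only part~(c) involves any real bookkeeping. For (a): since $\omega$ is extremal it is ergodic, hence so is $\tilde\omega$ (this was noted just after the definition of $\tilde\omega$, and is re-obtained from Power's criterion (4) above), so Proposition~2.3 applies to both $K_\omega$ and $K_{\tilde\omega}$. Thus $\psi\in K_\omega$ is a factor state if and only if $\psi$ is an extreme point of $K_\omega$, and $\tilde\psi\in K_{\tilde\omega}$ is a factor state if and only if $\tilde\psi$ is an extreme point of $K_{\tilde\omega}$; since $\psi\mapsto\tilde\psi$ and its inverse are affine they carry extreme points to extreme points in both directions, and concatenating the three equivalences yields (a).

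For (b): the reflection $Q\mapsto\tilde Q$ about the point $1/2$ is a $*$-automorphism of $\clb$ --- on each local algebra it is the order-reversing $*$-isomorphism onto the reflected local algebra, these isomorphisms are compatible with the inclusions, and being isometric they extend to a $*$-automorphism of $\clb$. Consequently composition with this automorphism is an affine homeomorphism of the state space of $\clb$, and $\pi_{\tilde\omega}$ is unitarily equivalent to $\pi_\omega$ composed with it, whence $\pi_{\tilde\omega}(\clb)''=\pi_\omega(\clb)''$. Therefore $\pi_{\tilde\omega}(\clb)''=\clb(\clh_{\tilde\omega})$ if and only if $\pi_\omega(\clb)''=\clb(\clh_\omega)$, which is (b). (Equivalently this could be routed through the characterisation of purity in terms of the asymptotics of $(\clm_0,\tau^n,\phi_0)$ and its dual; what is not automatic is only that, unlike ergodicity, purity does not come out of Power's criterion.)

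For (c): the statement is the converse half of Proposition~2.1 specialised to the family $(\tilde v_k)$, which already asserts that $P:\tilde\clh\to\clk$ is the support projection, in its GNS representation, of the Popescu-element state $\tilde s_I\tilde s_J^*\mapsto\phi(\tilde v_I\tilde v_J^*)$ on $\tilde\clo_d$ precisely when $\{y\in\clb(\clk):\sum_k\tilde v_k y\tilde v_k^*=y\}=\tilde\clm'$, i.e.\ when $\clb(\clk)_{\tilde\tau}=\tilde\clm'$. Two points must be pinned down. First, one must identify that Popescu-element state with the state $\tilde\psi$ of (28) obtained by reflecting the Wick words of $\psi$; this is the chain $\psi(s_{\tilde J}s_{\tilde I}^*)=\phi(v_{\tilde J}v_{\tilde I}^*)=\phi(\tilde v_I\tilde v_J^*)$, where the first equality is Proposition~2.1(b) and the second is the modular identity recorded just after (28), which itself rests on the definition (23) of $\tilde v_k$ through Tomita's $\clj$ and $\Delta$ and the modular relation used to establish it. Second, for the ``only if'' direction one observes that when $P$ is the support projection of $\tilde\psi$ the dilation triple $(\tilde\clh,\tilde S_k,P)$ is exactly the GNS datum of Proposition~2.1 for $(\tilde\clo_d,\tilde\psi)$, so parts (c) and (d) of that proposition give $\tilde\clm=P\pi_{\tilde\psi}(\tilde\clo_d)''P$ and $\tilde\clm'=\clb(\clk)_{\tilde\tau}$; the ``if'' direction is then the converse part of Proposition~2.1 quoted verbatim once this identification is in place.

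The step I expect to be the real obstacle is the bookkeeping inside (c): one has to keep straight which Hilbert space ($\clk$ versus the dilation space $\tilde\clh$) and which commutant is meant --- in particular $\tilde\clm'$ is the commutant taken in $\clb(\clk)$ and must be disentangled from the nested algebras $\clm_0\subseteq\clm$ and $\tilde\clm\subseteq\clm'$ sitting inside $\clb(\clk)$ --- and one must carry through the modular computation reconciling the reflected-word description of $\tilde\psi$ with its Popescu-element description; it is precisely there that the hypotheses that $\phi$ is faithful on $\clm$ and that $P$ is the support projection of the original state $\psi$ are consumed. Parts (a) and (b), by comparison, are immediate once the affine bijection $K_\omega\to K_{\tilde\omega}$ and the automorphism property of the reflection are granted.
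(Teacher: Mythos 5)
Your proposal is correct and follows essentially the same route as the paper: (a) via Power's criterion plus Proposition~2.3 and the affine bijection preserving extreme points, (b) via the observation that $Q\mapsto\tilde{Q}$ is a $*$-automorphism inducing a unitary equivalence of the GNS representations, and (c) as the converse part of Proposition~2.1 applied to the dual Popescu elements. The only quibble is that the reflection is not \emph{order-reversing} on products (it permutes tensor factors but is genuinely multiplicative, as the paper's identity $\widetilde{xy}=\tilde{x}\tilde{y}$ records); this does not affect your argument.
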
 

\vsp
\begin{proof} 
Since $\omega$ is an extremal translation-invariant state, by Power's criterion (4) $\tilde{\omega}$ is also an 
extremal state. Since an extremal point of $K_{\omega}$ is mapped to an extremal point in $K_{\tilde{\omega}}$ by the affine property 
of the map $\psi \raro \tilde{\psi}$, we conclude by Proposition 2.3 that $\psi$ is a factor state,  if and only if $\tilde{\psi}$
is a factor state. For (b) note that $\tilde{xy}=\tilde{x} \tilde{y}$ and $\tilde{x^*}=\tilde{x}^*$ by our definition. Thus 
$\tilde{\omega}(x^*y)= \omega(\tilde{x^*y})= \omega((\tilde{x})^*\tilde{y})$. Thus one can easily construct a unitary operator between 
the two GNS spaces associated with $(\clb,\omega)$ and $(\clb,\tilde{\omega})$ intertwining two representation modulo a reflection i.e. 
$U \pi_{\omega}(x) U^* = \pi_{\tilde{\omega}}(\tilde{x})$ and $U\Omega_{\omega} =\Omega_{\tilde{\omega}}$. Thus (b) is now obvious.  

\vsp 
The last statement (c) is the converse part of Proposition 2.1 applied to dual Popescu's elements $(\clk,\tilde{\clm},\tilde{v}_k:1 \le k \le d)$.  
\end{proof}

\vsp
Now we briefly recall the amalgamated representation $\pi$ of $\tilde{\clo}_d \otimes \clo_d$ [BJKW,Mo2] and a necessary and sufficient condition for a translation-invariant factor state $\omega$ to be pure. This criteria in particular,  proves that the equality 
$\clb(\clk)_{\tilde{\tau}}=\tilde{\clm}'$ in Proposition 2.4 (c) holds,  if and only if $\omega$ is pure.   

\vsp
Following [BJKW] we consider the amalgamated tensor product $\clh \otimes_{\clk} \tilde{\clh}$ of $\clh$ with 
$\tilde{\clh}$ over the joint subspace $\clk$. It is the completion of the quotient of the set 
$$\IC \bar{I} \otimes \IC I \otimes \clk,$$ 
where $\bar{I},I$ both consisting of all finite sequences with elements in $\{1,2, ..,d \}$, by the equivalence relation 
defined by a semi-inner product defined on the set by requiring
$$ \langle \bar{I} \otimes I \otimes f,\bar{I}\bar{J} \otimes IJ \otimes g \rangle = \langle f,\tilde{v}_{\bar{J}}v_Jg \rangle, $$
$$ \langle \bar{I}\bar{J} \otimes I \otimes f, \bar{I} \otimes IJ \otimes g \rangle  = \langle \tilde{v}_{\bar{J}}f,v_Jg \rangle $$
and all inner product that are not of these form are zero. We also define
two commuting representations $(S_i)$ and $(\tilde{S}_i)$ of $\clo_d$ on
$\clh \otimes_{\clk} \tilde{\clh}$ by the following prescription:
$$S_I\lambda(\bar{J} \otimes J \otimes f)=\lambda(\bar{J} \otimes IJ \otimes f),$$
$$\tilde{S}_{\bar{I}}\lambda(\bar{J} \otimes J \otimes f)=\lambda(\bar{J}\bar{I} \otimes J \otimes f),$$
where $\lambda$ is the quotient map from the index set to the Hilbert space. Note that the subspace generated by
$\lambda(\emptyset \otimes I \otimes \clk)$ can be identified with $\clh$ and earlier $S_I$ can be identified
with the restriction of $S_I$ defined here. Same is valid for $\tilde{S}_{\bar{I}}$. The subspace $\clk$ is
identified here with $\lambda(\emptyset \otimes \emptyset \otimes \clk)$. 
Thus $\clk$ is a cyclic subspace for the representation $$\tilde{s}_j \otimes s_i \raro \tilde{S}_j S_i$$ 
of $\tilde{\clo}_d \otimes \clo_d$ in the amalgamated Hilbert space. Let $P$ be the projection on $\clk$. Then we
have 
$$S_i^*P=PS_i^*P=v_i^*$$
$$\tilde{S}_i^*P=P\tilde{S}_i^*P=\tilde{v}^*_i$$
for all $1 \le i \le d$. 

We sum up result required in the following proposition.

\begin{pro} 
Let $\psi$ be an extremal element in $K_{\omega}$ and $(\clk,v_k,\;1 \le k \le d)$ be the Popescu's elements 
in the support projection of $\psi$ in $\pi(\clo_d)''$ described in Proposition 2.1 and $(\clk,\tilde{v}_k,\;1 \le k \le d)$ 
be the dual Popescu's elements and $\pi$ be the amalgamated representation of $\tilde{\clo}_d \otimes \clo_d$. Then the 
following holds:

\NI (a) For any $1 \le i,j \le d$ and $|I|,|J|< \infty$ and $|\bar{I}|,|\bar{J}| < \infty$
$$ \langle \Omega,\tilde{S}_{\bar{I}}\tilde{S}^*_{\bar{J}} S_iS_IS^*_JS^*_j \Omega \rangle = \langle \Omega, 
\tilde{S}_i \tilde{S}_{\bar{I}}\tilde{S}^*_{\bar{J}}\tilde{S}^*_jS_IS^*_J \Omega \rangle ;$$

\NI (b) The vector state $\psi_{\Omega}$ on $$\tilde{\mbox{UHF}}_d \otimes \mbox{UHF}_d \equiv
\otimes_{-\infty}^0 M_d \otimes_1^{\infty}M_d \equiv \otimes_{\IZ} M_d$$ is equal to $\omega$;

\NI (c) $\pi(\tilde{\clo}_d \otimes \clo_d)''= \clb(\tilde{\clh} \otimes_{\clk} \clh)$ and 
$\clm \vee \tilde{\clm} = \clb(\clk)$;

\NI (d) Let $E$ and $\tilde{E}$ be the support projection of $\psi$ in $\pi(\clo_d)''$ and 
$\pi(\tilde{\clo})d)''$ respectively i.e. $E=[\pi(\clo_d)'\Omega]$ and $\tilde{E}=[\pi(\tilde{\clo}_d)'\Omega]$. If $F=[\pi(\cld_d)''\Omega]$ and $\tilde{F}=[\pi(\tilde{\clo}_d)''\Omega]$ then, the following statements are 
equivalent:

\NI (i) $\omega$ is pure;

\NI (ii) $E=\tilde{F}$ and $\tilde{E}=F$; 

\NI (iii) $P=\tilde{E}F$;

\NI (iv) $\tilde{\clm}'=\clb(\clk)_{\tilde{\tau}}$; 

\NI (v) $\pi_{\omega}(\clb_R)'=\pi_{\omega}(\clb_L)''$ (Haag duality).

\vsp 
In such a case ( i.e. if $\omega$ is pure ) $\clm'=\tilde{\clm}$ as von Neumann 
sub-algebra of $\clb(\clk)$ and $\clm'_0= \tilde{\clm}_0$ as von-Neumann sub-algebra of $\clb(\clk_0)$. Furthermore,  $\tilde{\clm}=P\pi(\tilde{\clo}_d)''P$ and $\tilde{\clm}_0=P\pi(\tilde{\mbox{UHF}}_d)''P$.
\end{pro}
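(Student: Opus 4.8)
The plan is to exploit the amalgamated representation $\pi$ of $\tilde{\clo}_d \otimes \clo_d$ on $\clh \otimes_{\clk} \tilde\clh$ and the defining semi-inner product relations, deducing each of the listed statements in turn. For part (a), I would simply compute both sides of the claimed identity using the commutation relations $S_i\tilde{S}_{\bar j}=\tilde{S}_{\bar j}S_i$ and the Cuntz relations for $(S_i)$, $(\tilde S_i)$, together with $S_i^*P=v_i^*$, $\tilde S_i^*P=\tilde v_i^*$; since $\Omega\in\clk$ (i.e. $P\Omega=\Omega$), both sides collapse to an expression in $\langle\Omega, \tilde v_{\bar I}\tilde v_{\bar J}^*\,\cdot\,\Omega\rangle$ which matches by the $\lambda$-invariance of $\psi$ (equivalently $\psi\circ\lambda=\psi$, which encodes exactly the shift identity $\psi(s_Is_J^*)=\psi(\lambda(s_Is_J^*))$). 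For part (b), the identification of $\tilde{\mbox{UHF}}_d\otimes\mbox{UHF}_d$ with $\otimes_\IZ M_d$ carries the Wick monomial $\tilde s_{\bar I}\tilde s_{\bar J}^*\otimes s_Is_J^*$ (with $|\bar I|=|\bar J|$, $|I|=|J|$) to a matrix unit supported on $[-n+1,m]$, and by construction $\psi_\Omega$ of that monomial equals $\phi(\tilde v_{\bar I}\tilde v_{\bar J}^* v_I v_J^*)$; using $\tilde v_k\in\clm'$ this factors, and with (25), (26) one checks it equals $\omega$ of the corresponding element of $\clb$. This is essentially Lemma 7.x of [BJKW] re-expressed.

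For part (c), minimality of the Stinespring/Popescu dilation gives that $\clk$ is cyclic for $\pi(\tilde\clo_d\otimes\clo_d)$, and since $\psi$ (hence $\omega$) is a factor state one argues that the commutant $\pi(\tilde\clo_d\otimes\clo_d)'$ is trivial: any element of it compresses to the center of $P\pi(\tilde\clo_d\otimes\clo_d)''P=\clm\vee\tilde\clm$, and a standard harmonic-projection argument (as in Proposition 2.1, using $\Lambda_n(P)\uparrow I$) propagates triviality from the compression to the whole algebra; thus $\pi(\tilde\clo_d\otimes\clo_d)''=\clb(\clh\otimes_\clk\tilde\clh)$ and compressing by $P$ gives $\clm\vee\tilde\clm=\clb(\clk)$. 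Part (d) is the heart of the matter. The implication (i)$\Rightarrow$(v) is the Haag-duality statement; I would derive the circle of equivalences by identifying, via (b), the GNS triple of $(\clb,\omega)$ inside the amalgamated space with $\pi_\omega(\clb_L)''=\tilde F\,\pi(\tilde\clo_d)''\tilde F$-type compressions and $\pi_\omega(\clb_R)''$ with the corresponding $\clo_d$-side, so that purity of $\omega$ $\Leftrightarrow$ $\pi_\omega(\clb)''=\clb(\clh_\omega)$ $\Leftrightarrow$ the two support projections coincide as in (ii); then (ii)$\Leftrightarrow$(iii) is a projection computation in the amalgamated space, (iii)$\Leftrightarrow$(iv) follows from the converse part of Proposition 2.1 applied to the dual Popescu family (this is exactly Proposition 2.4(c)), and (iv)$\Leftrightarrow$(v) translates the equality $\tilde\clm'=\clb(\clk)_{\tilde\tau}$ back through the dictionary $\clm=P\pi(\clo_d)''P$, $\tilde\clm=P\pi(\tilde\clo_d)''P$ into $\pi_\omega(\clb_R)'=\pi_\omega(\clb_L)''$.

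The main obstacle I anticipate is the bookkeeping in (d): keeping straight the four projections $E,\tilde E,F,\tilde F$ on the amalgamated space and showing, under purity, that $E=\tilde F$ and $\tilde E=F$ — this requires knowing that $[\pi(\clo_d)'\Omega]$ and $[\pi(\tilde\clo_d)''\Omega]$ are the \emph{same} subspace, which is not formal and uses that $\clm$ and $\tilde\clm$ are each other's commutants precisely when $\omega$ is pure (so that $\pi(\clo_d)'\supseteq\pi(\tilde\clo_d)''$ with equality only in the pure case). I would handle this by first establishing the easy inclusions $F\le E$, $\tilde F\le\tilde E$ and $\pi(\tilde\clo_d)''\subseteq\pi(\clo_d)'$ (from commutation of the two representations), then using Proposition 2.1(d) and the factor property to show the compressions $\clm'$ and $\tilde\clm$ agree iff $\clb(\clk)_{\tilde\tau}=\tilde\clm'$, and finally invoking that the latter is equivalent to purity via the criterion already announced just before the proposition (the amalgamated-representation purity criterion from [Mo2]). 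The remaining statements about $\clm_0,\tilde\clm_0$ then follow by compressing everything further by $P_0=[\clm_0\Omega]$ and using that $\sigma_t$ and $\clj$ preserve $\clm_0$, so that the dual construction restricts cleanly to the gauge-fixed-point level.
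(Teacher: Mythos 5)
First, note that the paper does not actually prove this proposition: its ``proof'' is the single sentence ``For (a)--(c) we refer to Proposition 3.1 and Proposition 3.2 in [Mo2] and for (d) we refer to Theorem 3.6 in [Mo2].'' So you are reconstructing material from an earlier paper rather than paralleling an argument given here. Your outline follows the same general strategy as [Mo2] (reduce everything to the compressed data $(\clk,v_k,\tilde v_k,\clj,\Delta)$ via the amalgamated inner-product relations), and parts (a) and (b) are essentially right, though the mechanism in (a) is really the identity $\tilde v_{\bar I}^*\Omega=v_{\widetilde{\bar I}}^*\Omega$ coming from the modular construction $\tilde v_k=\overline{\clj\sigma_{i/2}(v_k^*)\clj}$, with $\lambda$-invariance only entering through the setup.

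There are, however, two genuine gaps. In (c) your argument is circular: you propose to show $\pi(\tilde\clo_d\otimes\clo_d)'$ is trivial because its compression lands in ``the center of $P\pi(\tilde\clo_d\otimes\clo_d)''P=\clm\vee\tilde\clm$'' --- but for $X'$ in the commutant, $PX'P$ need not lie in $\clm\vee\tilde\clm$ nor commute with it (that would require $P$ to commute with $X'$, i.e.\ $P\in\pi(\tilde\clo_d\otimes\clo_d)''$, which is not given), and even granting it, concluding that $PX'P$ is scalar presupposes $\clm\vee\tilde\clm=\clb(\clk)$, which is the other half of what (c) asserts. The actual route in [Mo2] derives $\clm\vee\tilde\clm=\clb(\clk)$ from the factor property of $\psi$ via the identifications $\clm'=\clb(\clk)_\tau$, $\tilde\clm\subseteq\clm'$ and an analysis of jointly $\tau$- and $\tilde\tau$-invariant elements, and only then propagates irreducibility to the amalgamated space using $\Lambda_n(P)\uparrow I$. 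In (d) you correctly isolate the crux --- that purity forces $E=\tilde F$ and $\tilde E=F$ --- but your proposal stops at listing the easy inclusions ($F\le E$, $\pi(\tilde\clo_d)''\subseteq\pi(\clo_d)'$) and then defers to ``the criterion already announced just before the proposition,'' which is this very proposition. The implication (i)$\Rightarrow$(ii) (and hence (i)$\Rightarrow$(v), Haag duality) is the substantive content of Theorem 3.6 of [Mo2]; it is not formal and is not recovered by the bookkeeping you describe. As it stands, your (d) is a correct statement of what must be proved together with a correct dependency graph among (ii)--(v), but the anchor implication from purity is missing.
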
 

\vsp 
\begin{proof} 
For (a)-(c) we refer to Proposition 3.1 and Proposition 3.2 in [Mo2] and for (d) we refer to Theorem 3.6 in [Mo2]. 
\end{proof}

\vsp
We consider ( [BR-II] page 110 ) the unique KMS state $\psi_{\beta}$ at inverse temperature $\beta=ln(d)$ for the automorphism $\alpha_t(s_i)=e^{it}s_i:\;t \in \IR$ on $\clo_d$. The state $\psi_{\beta}$ is $\lambda$ invariant and its restriction  $\omega_0=\psi_{|}\mbox{UHF}_d$ is the unique faithful trace on $\clb_R$. The state 
$\psi_{\beta}$ being a KMS state for an automorphism, the normal state induced by the cyclic vector on $\pi_{\psi}(\clo_d)''$ is also separating for $\pi(\clo_d)''$. Thus we check that $v^*_k=S^*_k$ and a simple computation shows that $\clj \tilde{v}^*_k \clj = {1 \over d} S_k$. Thus equality $\tilde{\clm}=\clm'$ holds. But the corner vector space $P\pi(\tilde{\clo}_d)''P$ generated by the elements $\{\tilde{v}_I\tilde{v}^*_J: |I|,|J| < \infty \}$ is equal to the linear span of $\{S_I,S^*_J: |I|,|J| < \infty\}$ which fails to be an algebra. Thus $P \neq \tilde{E}F$. However,  the unique normalize trace $\omega_0$ on $\clb$ is having symmetry given in (5) and (6) and also reflection positive. This gives 
an example indicating how pure property of $\omega$ balances the support projections of 
$\psi$ and $\tilde{\psi}$ in the amalgamated representation $\pi$ of $\clo_d$ and $\tilde{\clo}_d$ respectively.

\section{Symmetries of a translation-invariant pure state on $\clb$ } 

\vsp 
In this section we investigate translation-invariant factor states on $\clb$ with 
some discrete symmetries. Finally we specialized our analysis to pure states with additional symmetries and its two-point correlation functions.  
 
\begin{pro} 
Let $\omega$ be a lattice-symmetric translation-invariant factor state of $\clb$. Then the 
map $\psi \raro \tilde{\psi}$ is an affine bijective map on $K_{\omega}$. Furthermore,  
there exists an extremal element $\psi \in K_{\omega}$ for which 
$$\tilde{\psi}=\psi \beta_{\zeta},$$ 
where $\zeta^2 \in H$ and $\zeta=exp^{\pi i \over n}$, if $H=\{z: z^n=1 \}$ 
for some $n \ge 1$ otherwise $\zeta=1$, if $H=S^1$. 
\end{pro}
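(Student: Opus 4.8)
The plan is to exploit the one-to-one correspondence $\omega \Leftrightarrow K^{ext}_{\omega}$ established in Proposition 2.3 together with the affine bijectivity of $\psi \raro \tilde\psi$. First I would observe that since $\omega$ is a translation-invariant factor state and $\omega = \tilde\omega$ (lattice symmetry), the map $\psi \raro \tilde\psi$ from $K_\omega$ to $K_{\tilde\omega}$ is actually a self-map of $K_\omega$; combined with the discussion after (28) it is affine and bijective, and it carries extreme points to extreme points. Next I would pick any extremal $\psi_0 \in K_\omega$. Then $\tilde\psi_0$ is again extremal in $K_\omega$, so by Proposition 2.3 there exists $z_0 \in S^1$ with $\tilde\psi_0 = \psi_0 \beta_{z_0}$, and the subgroup $H = \{ z \in S^1 : \psi_0 \beta_z = \psi_0 \}$ is independent of the chosen extremal point.

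The heart of the argument is to show that by replacing $\psi_0$ with a suitable gauge-rotated extremal point $\psi = \psi_0 \beta_w$, one can arrange $\tilde\psi = \psi\beta_\zeta$ with $\zeta$ of the stated special form. I would first compute how $\psi \raro \tilde\psi$ interacts with the gauge action: one checks directly from (28) that $\widetilde{\psi\beta_w} = \tilde\psi \beta_{\bar w}$ (the reflection $I \mapsto \tilde I$ reverses the order of the multi-index but does not change $|I|-|J|$, so a gauge factor $w^{|I|-|J|}$ is sent to $\bar w^{|I|-|J|}$; the precise sign is the routine computation I would not grind through here). Hence if $\psi = \psi_0\beta_w$ then $\tilde\psi = \tilde\psi_0 \beta_{\bar w} = \psi_0 \beta_{z_0}\beta_{\bar w} = \psi_0 \beta_{z_0 \bar w^2} \cdot$(a correction absorbed into $H$), so that $\tilde\psi = \psi \beta_{z_0 \bar w^2}$ modulo $H$. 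Thus the ``twist'' $\zeta$ is well-defined only as an element of the quotient $S^1/H$, and one is free to modify it by any square $\bar w^2$ with $w \in S^1$.

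Then I would finish by a case distinction on $H$. If $H = S^1$ there is nothing to prove: every gauge automorphism fixes $\psi_0$, so $\tilde\psi_0 = \psi_0 = \psi_0\beta_1$ and we take $\zeta = 1$. If $H = \{ z : z^n = 1 \}$ is the finite cyclic group of order $n$, then squares in $S^1$ are all of $S^1$, so the coset $\zeta H \in S^1/H$ can be represented by any chosen element; I would choose the representative to lie in the subgroup generated by $\exp(\pi i/n)$, which contains $H$ as an index-two subgroup, so that $\zeta = \exp(\pi i/n)$ satisfies $\zeta^2 = \exp(2\pi i/n) \in H$. One must check that $\tilde\psi = \psi\beta_\zeta$ holds on the nose (not merely modulo $H$) for an appropriate choice of $\psi$ in the $H$-orbit; this is where the freedom to rotate $\psi_0$ by $\beta_w$ is used to kill the ambiguity. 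The main obstacle I anticipate is precisely this last bookkeeping: tracking the gauge factor through the reflection map carefully enough to see that the twist $\zeta$ lands in $\langle \exp(\pi i/n)\rangle$ and that an honest (not just modulo-$H$) identity $\tilde\psi = \psi\beta_\zeta$ can be achieved, rather than something only true up to an element of $H$. Everything else — affineness, bijectivity, preservation of extreme points, invariance of $H$ — is either immediate or already recorded in Proposition 2.3 and the surrounding discussion.
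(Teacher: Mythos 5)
Your outline follows the paper's own route step for step: affine bijectivity of $\psi \raro \tilde{\psi}$, stability of extreme points, Proposition 2.3 to get $\tilde{\psi}_0=\psi_0\beta_{z_0}$ with $H$ independent of the extremal point, and then a normalization of the twist by passing to $\psi_0\beta_w$. The gap is exactly at the step you declined to "grind through": the transformation law of the twist under the gauge action and under the involution. You assert $(\psi\beta_w)^{\sim}=\tilde{\psi}\beta_{\overline{w}}$, whereas the paper's proof rests on $(\psi\beta_z)^{\sim}=\tilde{\psi}\beta_{z}$ (no conjugation). This is not a cosmetic sign: with the paper's law the twist is \emph{gauge-invariant} ($\tilde{\psi}'=\psi'\beta_{\zeta_0}$ for every $\psi'=\psi\beta_w$), so your "freedom to modify by any square $\overline{w}^2$" does not exist, and the dichotomy must instead be extracted from the relation $\tilde{\tilde{\psi}}=\psi$, which forces $\zeta_0^2\in H$ and hence $\zeta_0\in\{1,e^{\pi i/n}\}$ modulo $H$. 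You never invoke $\tilde{\tilde{\psi}}=\psi$ at all; you try to impose $\zeta^2\in H$ by choosing a coset representative, which is the conclusion, not an argument.

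With your conjugated law the situation is no better: then $\tilde{\psi}'=\psi'\beta_{\zeta_0\overline{w}^{2}}$ exactly (not merely modulo $H$), and since squares exhaust $S^1$ you could normalize the twist to \emph{any} value, in particular always to $\zeta=1$. That would "prove" the proposition trivially but would contradict the way $\zeta$ is used downstream (Proposition 3.2 treats $\zeta$ as a fixed invariant of $\omega$, the same for all extremal $\psi\in K_{\omega}$, distinguishing the cases $\zeta=1$ and $\zeta=e^{\pi i/n}$), and it destroys the content of the statement. Moreover $\tilde{\tilde{\psi}}=\psi$ then yields only the tautology $\psi=\psi$, so no constraint $\zeta_0^2\in H$ is ever derived. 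You correctly flagged "achieving the identity on the nose rather than modulo $H$" as the main obstacle, but that is precisely the point left unresolved: whichever sign convention is right, your argument as written either proves too much or cannot carry out the normalization. To close the gap you must (i) compute the sign honestly from definition (28) and the gauge action on $\tilde{\clo}_d$, and (ii) in the un-conjugated case, derive $\zeta_0^2\in H$ from the involutivity $\tilde{\tilde{\psi}}=\psi$ and then argue, as the paper does via the induced self-inverse map on $S^1/H$, that the class of $\zeta_0$ is either trivial or the unique element of order two, represented by $e^{\pi i/n}$.
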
 
\begin{proof} 
That the map $\psi \raro \tilde{\psi}$ is affine follows once we use implicit definition of dual states given in (28). Since $\tilde{\psi}$ restricted to $\clb_R$ is $\tilde{\omega}_R$, the map is bijective from $K_{\omega}$ onto $K_{\tilde{\omega}}$ 
as $\tilde{\tilde{\psi}}=\psi$. In particular,  the map is affine and bijective on $K_{\omega}$ once $\omega=\tilde{\omega}$. In particular,  the affine map takes any given extremal element of $K_{\omega}$ to another extremal element of $K_{\omega}$ if not itself. 

\vsp 
We fix an extremal element $\psi \in K_{\omega}$ and recall closed subgroup 
$H=\{z \in S^1: \psi = \psi \beta_z \}$ of $S^1$ is independent of choice for 
an extremal point $\psi \in K_{\omega}$ by Proposition 2.3. Thus the set of 
extremal elements in $K_{\omega}$ can be identified with $S^1 / H \equiv S^1$ 
or $\{1 \}$ by the map $z \raro \psi \beta_z$.  

\vsp 
If $\omega=\tilde{\omega}$ then, $\tilde{\psi}$ is also an extremal element 
in $K_{\omega}$ and thus by Proposition 2.3 there exists an $\zeta_0 \in S^1$ such that $\tilde{\psi}=\psi \beta_{\zeta_0}$. Since $\tilde{\tilde{\psi}}=\psi$ and $\tilde{\psi \beta_z}=\tilde{\psi} \beta_z$ for all $z \in S^1$, we compute in particular,  for 
$\omega=\tilde{\omega}$ that 
$$\psi = \tilde{\tilde{\psi}} = (\psi \beta_{\zeta_0}\tilde{)} = \tilde{\psi} \beta_{\zeta_0} =\psi \zeta_{\zeta_0^2}$$ 
Thus $\zeta_0^2 \in H$. Without loose of generality we can take 
$\zeta_0$ to be any element in the quotient group $S^1 / H$ i.e. any other 
element in $\zeta \in S^1$ for which $\zeta^{-1} \zeta_0 \in H$.   

\vsp 
If $H=\{z \in S^1: z^n=1 \}$, we can modify our choice for extremal element $\psi \in K_{\omega}$, if required, for $\tilde{\psi}=\psi \beta_{\zeta}$ with $\zeta = exp^{\pi i \over n}$ as follows: 

\vsp 
Since $(\beta \beta_z\tilde{)}=\tilde{\beta} \beta_z$ for all $z \in S^1$, the affine map $\psi \raro \tilde{\psi}$ takes $\psi \beta_z \raro \psi \beta_{\zeta_0} \beta_z $ and 
the affine map induces a continuous one to one and onto map on quotient group $S^1 / H \equiv S^1$ as $\tilde{\tilde{\psi}}=\psi$. However,  any continuous one to one and onto map on $S^1$ with its own inverse is either has a fixed point or it is a rotation map by an angle $\pi$. So either the affine map induces a fixed point in the map 
$\psi \beta_z \raro \psi \beta_{\zeta_0} \beta_z,\;z \in S^1 / H$ or 
$\zeta_0^2=1$ in $S^1 / H \equiv S^1$.   

\vsp  
Thus there exists an extremal element still denoting $\psi \in K_{\omega}$ so that $\tilde{\psi}=\psi \beta_{\zeta}$,  where $\zeta$ is either $1$ or $-1$,  where we have identified $S^1 / H =S^1$ when $H \neq S^1$. Once we remove the identification $S^1 / H \equiv S^1$ to arrive $\tilde{\psi}=\psi \beta_{\zeta}$, where $\zeta$ is either $1$ or $exp^{\pi i \over n }$.  

\vsp 
Note that in case $H=S^1$ then, $\tilde{\psi}=\psi$ for $\psi \in K_{\omega}$ as $K_{\omega}$ is a singleton set by Proposition 2.3. 
\end{proof} 

\vsp
\begin{pro} Let $\omega$ be a translation-invariant state on $\clb$ as in Proposition 2.5. 
If $\omega$ is lattice reflection symmetric then, the following holds:

\NI (a) If $H=\{z: z^n = 1 \}$ for some $n \ge 1$ then, $\tilde{\psi}=\psi \beta_{\zeta}$ for all $\psi \in K_{\omega}$,  where $\zeta$ is 
fixed either $1$ or $exp^{\pi i \over n }$ and there exists a unitary operator $U_{\zeta}$ on $\clh \otimes_{\clk} \tilde{\clh}$ so that
\be
U_{\zeta}^*=U_{\bar{\zeta}}, U_{\zeta}\Omega=\Omega,\;\;\;U_{\zeta} S_k U_{\zeta}^*= \beta_{\bar{\zeta}}(\tilde{S}_k)
\ee
for all $1 \le k \le d$. 

\vsp 
Furthermore,  if $\omega$ is also pure then, there exists a unitary operator $u_{\zeta}: \clk \raro \clk$ 
so that
\be
u_{\zeta}\Omega=\Omega, \;\;\; u_{\zeta}v_ku_{\zeta}^*= \beta_{\bar{\zeta}}(\tilde{v}_k)
\ee
for all $1 \le k \le d$ and $u_{\zeta} \clj u_{\zeta}^*=\clj,\;\;u_{\zeta} \Delta^{1 \over 2} u_{\zeta}^*=\Delta^{-{1 \over 2}}$, 
$u_{\zeta}^*=u_{\bar{\zeta}}$ and $u_{\zeta} \clm u_{\zeta}^* = \clm',\;u^*_{\zeta}\clm u_{\zeta}=\clm'$. Further if $\zeta=1$ then, $u_{\zeta}$ is self-adjoint and otherwise if $\zeta \ne 1$ then, $u_{\zeta}^{2n}$ is self adjoint.  

\NI (b) If $H=S^1$ then, $K_{\omega}$ has a unique element $\psi$ and (30) is valid with $\zeta=1$. Further if $\omega$ is also pure then, (31) is also valid with $\zeta=1$. 

\end{pro}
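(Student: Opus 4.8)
The plan is to leverage Proposition 3.1, which produces an extremal $\psi \in K_\omega$ with $\tilde\psi = \psi\beta_\zeta$ for the distinguished $\zeta$, and then to bootstrap this identity of states into a unitary intertwiner on the amalgamated Hilbert space $\clh \otimes_\clk \tilde\clh$. First I would use Proposition 2.5(b) together with the defining relation $\tilde\psi(\tilde s_I \tilde s_J^*) = \psi(s_{\tilde J} s_{\tilde I}^*)$ of (28): the vector state at $\Omega$ recovers $\omega$ on $\tilde{\mathrm{UHF}}_d \otimes \mathrm{UHF}_d$, and the lattice-reflection symmetry $\omega = \tilde\omega$ means the two total families $\{\tilde S_{\bar I} S_I \Omega\}$ and their reflections have matching inner products once one also accounts for the gauge twist $\beta_\zeta$. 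Concretely, I would define $U_\zeta$ on the dense set by sending $S_I\Omega$ to $\beta_{\bar\zeta}(\tilde S_I)\Omega$ (and more generally $\tilde S_{\bar I} S_I \Omega \mapsto \beta_{\bar\zeta}(S_{\bar I})\beta_{\bar\zeta}(\tilde S_I)\Omega$, using that $(S_i)$ and $(\tilde S_i)$ commute), and verify isometry by a direct computation using Proposition 2.5(a) and (28). That $U_\zeta$ is well-defined, isometric, and has dense range (so unitary) is the routine-but-central calculation; $U_\zeta\Omega = \Omega$, $U_\zeta^* = U_{\bar\zeta}$ and $U_\zeta S_k U_\zeta^* = \beta_{\bar\zeta}(\tilde S_k)$ then follow by tracking the action on the spanning vectors. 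This establishes (30).

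Next, assuming $\omega$ pure, I would descend $U_\zeta$ to a unitary $u_\zeta$ on $\clk$. Since $\omega$ is pure, Proposition 2.5(d) gives $P = \tilde E F$ and $\clm' = \tilde\clm$, and in particular the support projection $P$ is respected by the symmetry: $U_\zeta$ carries $[\pi(\clo_d)'\Omega] = \tilde F$ onto $[\pi(\tilde\clo_d)'\Omega] = E$ and conversely (using (30) and the equivalence in 2.5(d)), hence $U_\zeta P U_\zeta^* = P$. Setting $u_\zeta = U_\zeta|_\clk = P U_\zeta P$ gives a unitary on $\clk$ with $u_\zeta\Omega = \Omega$, and compressing $U_\zeta S_k U_\zeta^* = \beta_{\bar\zeta}(\tilde S_k)$ to $\clk$ (using $v_k^* = S_k^* P$, $\tilde v_k^* = \tilde S_k^* P$ from Proposition 2.5 and $\beta$ commuting with the compression) yields $u_\zeta v_k u_\zeta^* = \beta_{\bar\zeta}(\tilde v_k)$, which is (31). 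For the modular data: since $u_\zeta$ intertwines $\clm$ and $\tilde\clm = \clm'$ and fixes the cyclic-separating vector $\Omega$, it intertwines the operator $S_0: a\Omega \mapsto a^*\Omega$ for $\clm$ with the corresponding operator for $\clm'$, whose polar decomposition is $\clj\Delta^{-1/2}$ (the commutant's modular operator is $\Delta^{-1}$ with the same $\clj$); uniqueness of polar decomposition then forces $u_\zeta \clj u_\zeta^* = \clj$ and $u_\zeta \Delta^{1/2} u_\zeta^* = \Delta^{-1/2}$. The self-adjointness claims follow from $u_\zeta^* = u_{\bar\zeta}$: when $\zeta = 1$ this is immediate, and when $\zeta = \exp(\pi i/n)$ one applies $u_\zeta v_k u_\zeta^* = \beta_{\bar\zeta}(\tilde v_k)$ twice together with the dual-of-dual relation and $\zeta^2 \in H$ to see that $u_\zeta^{2}$ intertwines $\clm$ with itself via a gauge element in $H$, so a suitable power ($u_\zeta^{2n}$, killing the $2n$-th root of unity) is self-adjoint. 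For part (b), when $H = S^1$ Proposition 2.3 forces $K_\omega$ to be a singleton, so $\tilde\psi = \psi$ (the case $\zeta = 1$), and everything above specializes.

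The main obstacle I anticipate is the very first well-definedness/isometry check for $U_\zeta$: the amalgamated space is built from a semi-inner product on $\IC\bar I \otimes \IC I \otimes \clk$ with the nontrivial gluing relations $\langle \bar I \otimes I \otimes f, \bar I\bar J \otimes IJ \otimes g\rangle = \langle f, \tilde v_{\bar J} v_J g\rangle$ (and the mixed version), and one must confirm that the candidate map $S_I\Omega \mapsto \beta_{\bar\zeta}(\tilde S_I)\Omega$ is compatible with \emph{all} these relations simultaneously — this is precisely where $\omega = \tilde\omega$ and the gauge correction $\beta_\zeta$ from Proposition 3.1 must conspire, via the identity in Proposition 2.5(a). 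A secondary subtlety is keeping the gauge factor $\bar\zeta$ in the right place throughout the compressions (so that $u_\zeta v_k u_\zeta^* = \beta_{\bar\zeta}(\tilde v_k)$ rather than an uncontrolled phase), and verifying that purity is exactly what is needed for $U_\zeta$ to preserve $P$ — without purity the support projections of $\psi$ and $\tilde\psi$ need not coincide (as the trace example at the end of Section 2 shows), and the descent to $\clk$ would fail.
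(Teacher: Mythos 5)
Your proposal follows essentially the same route as the paper's proof: invoke Proposition 3.1 (extended to all of $K_{\omega}$ via Proposition 2.3 and Krein--Milman), define $U_{\zeta}$ on the amalgamated space by swapping the tilde and non-tilde Cuntz monomials with the gauge twist $\beta_{\bar{\zeta}}$, verify unitarity from Proposition 2.5(a) together with $\tilde{\psi}=\psi\beta_{\zeta}$, use purity ($P=E\tilde{E}$ with $U_{\zeta}$ interchanging $E$ and $\tilde{E}$) to compress to $u_{\zeta}=PU_{\zeta}P$, and obtain the modular relations from uniqueness of the polar decomposition of Tomita's $S$. Apart from a harmless notational slip in identifying which of $E,\tilde{E},F,\tilde{F}$ is carried to which, the argument and its key steps coincide with the paper's.
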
 

\vsp
\begin{proof} 
Proposition 3.1 gives existence of an extremal element $\psi \in K_{\omega}$ so that $\tilde{\psi}=\psi \beta_{\zeta}$, where $\zeta$ is fixed either 
equal to $1$ or $exp^{\pi i \over n}$. As $(\psi \beta_z\tilde{)}=\tilde{\psi} \beta_z$ 
for all $z \in S^1$, a simple application of Proposition 2.3 says that $\tilde{\psi} = 
\psi \beta_{\zeta} $ for all extremal points in $K_{\omega}$ if it holds for one extremal element. Hence existence part in (a) is true by Krein-Millmann theorem.

\vsp
$\Omega$ is a cyclic vector for $\pi(\clo_d \otimes \tilde{\clo}_d)$ and thus we define
$U_{\zeta}:\clh \otimes_{\clk} \tilde{\clh} \raro \clh \otimes_{\clk} \tilde{\clh}$ by
$$U_{\zeta}: \tilde{S}_{I'}\tilde{S}_{J'}^*S_IS^*_J  \Omega \raro 
\beta_{\bar{\zeta}}(\tilde{S}_I\tilde{S}^*_JS_{I'}S_{J'}^*)\Omega$$
That $U_{\zeta}$ is a unitary operator follows once we verify the following steps 
by our condition that $\tilde{\psi}=\psi \beta_{\zeta}$ and Proposition 2.5 (a) as follows:
$$\langle \Omega,  \tilde{S}_{I'}\tilde{S}^*_{J'}S_{I}S^*_{J}\Omega \rangle$$
$$=\langle \Omega,  \tilde{S}_{I'}S_{I}S^*_{J}\tilde{S}^*_{J'}\Omega \rangle$$
( since $\pi(\clo_d)''$ commutes with $\pi(\tilde{\clo}_d)''$) 
$$=\langle \Omega, S_{\tilde{I'}}S_{I}S^*_{J}S^*_{\tilde{J'}}\Omega \rangle$$
(by Proposition 2.5 (a),  where we used $v_I^*\Omega=\tilde{v}_{\tilde{I}}\Omega$ ) 
$$=\langle \Omega,\beta_{\bar{\zeta}}( \tilde{S}_{\tilde{I'}}\tilde{S}_{I}\tilde{S}^*_{J}\tilde{S}^*_{\tilde{J'}})\Omega \rangle$$
( since $\tilde{\psi} = \psi \beta_{\zeta}$ )  
$$=\langle \Omega, \beta_{\bar{\zeta}}(S_{I'}\tilde{S}_I\tilde{S}^*_JS_{J'}^*)\Omega \rangle $$
(by Proposition 2.5 (a) again) 
$$=\langle \Omega, \beta_{\bar{\zeta}}(\tilde{S}_I\tilde{S}^*_JS_{I'}S_{J'}^*)\Omega \rangle$$
(since $\pi(\clo_d)''$ commutes with $\pi(\tilde{\clo}_d)''$)

\vsp 
By Cuntz relation (9),  we have 
$$(S_{I_1}S^*_{J_1})^*S_{I_2}S^*_{J_2}$$
$$=S_{J_1}S_{I_1}^*S_{I_2}S^*_{J_2}$$
$$=0 \;\mbox{or}\;S_{I_3}S^*_{J_3} \neq 0$$ 
, where $I_3$ and $J_3$ in $\cli$ are uniquely determined by 
sets $I_1,J_1$ and $I_2,J_2$. Note also that 
$\beta_{g_0}( (S_{I_1}S^*_{J_1})^*)\beta_{g_0}(S_{I_2}S^*_{J_2})=\beta_{g_0}(S_{I_3}S^*_{J_3})$. Note that the same set $I_3,J_3$ is determined if we use Cuntz elements $\pi(\tilde{\clo}_d)$ instead of $\pi(\clo_d)$. Now we verify the inter-product 
preserving property between two elements of the form $f_{I',J',I,J}=\tilde{S}_{I'}\tilde{S}^*_{J'}S_{I}S^*_{J}\Omega$ as follows:
$$\langle f_{I_1',J_1',I_1.J_1}, f_{I'_2,J'_2,I_2,J_2} \rangle$$
$$=\langle \Omega, f_{I'_3,J'_3,I_3,J_3} \rangle$$
$$=\langle \Omega, \beta_{g_0}(f_{I_3,J_3,I'_3,J'_3}) \rangle$$
$$=\langle \beta_{g_0}(f_{I_1,J_1,I'_1,J'_1}), \beta_{g_0}(f_{I_2,J_2,I'_2,J'_2}) \rangle$$ 

\vsp 
By our construction we also have 
$U_{\zeta}S_k = \beta_{\bar{\zeta}}(\tilde{S}_k)U_{\zeta}$ for all $1 \le k \le d$. In particular,  $U_{\zeta}\pi(\clo_d)''U_{\zeta}^*=\pi(\tilde{\clo}_d)''$. 

\vsp 
We recall projections $E=[\pi(\clo_d)'\Omega]$ and $\tilde{E}=[\pi(\tilde{\clo}_d)'\Omega]$  
defined in Proposition 2.5 (d). We also recall projections $F=[\pi(\clo_d)'\Omega]$ and $\tilde{F}=[\pi(\tilde{\clo}_d\Omega]$. We have $P=EF$ in general by our construction 
of amalgamated representation. If the state $\omega$ is pure, then by Proposition 2.5 (d) we have $F=\tilde{E}$ and $\tilde{F}=E$. Thus  
$$EF = E \tilde{E}=\tilde{E}\tilde{F}$$ 
i.e. $P=E\tilde{E}$. 

\vsp 
Once the pure state $\omega$ is also lattice reflection-symmetric, by our construction of 
$U_{\zeta}$, we have $U_{\zeta}\pi(\clo_d)'U^*_{\zeta}=\pi(\tilde{\clo}_d)'$ and $U_{\zeta}\pi(\tilde{\clo}_d)'U^*_{\zeta} = \pi(\clo_d)'$. Furthermore,  $U_{\zeta}\Omega=\Omega$ says that $U_{\zeta} E U_{\zeta}^*=\tilde{E}$ and 
$U_{\zeta} \tilde{E} U_{\zeta}^*=E$. Thus we have  
$U_{\zeta}PU_{\zeta}^*=U_{\zeta}E \tilde{E}U_{\zeta}^*=\tilde{E}E=P$, which ensures a unitary operator $u_{\zeta}=PU_{\zeta}P$ on $\clk$. Furthermore,  a routine calculation 
shows using (26) that
\be
u_{\zeta}\Omega=\Omega,\; u_{\zeta}v^*_k u_{\zeta}^*= \beta_{\bar{\zeta}}(\tilde{v}^*_k)
\ee
for all $1 \le k \le d$. As $U_{\zeta}^*=U_{\bar{\zeta}}$,  we have $u_{\zeta}^*=u_{\bar{\zeta}}$. If $\zeta \ne 1$, then
$\zeta^{2n}=1$ and thus $U^{2n}_{\zeta}$ is inverse of its own. Thus $u^{2n}_{\zeta}$ is self-adjoint. By Proposition 2.5 (d),  we have in general $\clm'=\tilde{\clm}$ if 
$\omega$ is pure. 

\vsp
We recall Tomita's conjugation operators [BR-I] 
$Sx\Omega=x^*\Omega,\;x \in \clm$ and $Fx'\Omega=x'^*\Omega,\;x' \in \clm'$ and verify 
the following equalities using (28) for $|I|,|J| < \infty$ 
$$u_{\zeta}Sv_Iv_J^*\Omega$$
$$=u_{\zeta}v_Jv_I^*\Omega$$
$$=u_{\zeta}v_Jv_I^*u^*_{\zeta}\Omega$$
$$=\beta_{\bar{\zeta}}(\tilde{v}_J\tilde{v}^*_I)\Omega$$
$$=F\beta_{\bar{\zeta}}(\tilde{v}_I\tilde{v}^*_J)\Omega$$
$$=Fu_{\zeta}v_Iv_J^*u_{\zeta}^*\Omega$$
$$=Fu_{\zeta}v_Iv_J^*\Omega$$ 
Since $F=\clj \Delta^{1 \over 2}$ on $\clm\Omega$ and $F=\clj \Delta^{-{1 \over 2}}$ on $\clm'\Omega$,  we have 
$u_{\zeta} \clj \Delta^{1 \over 2} = \clj \Delta^{-{1 \over 2}}u_{\zeta}$ on $\clm \Omega$, i.e $u_{\zeta} \clj u^*_{\zeta} 
u_{\zeta}\Delta^{1 \over 2}u^*_{\zeta} = \clj \Delta^{-{1 \over 2}}$ on $\clm \Omega$. 
Thus by the uniqueness of polar decomposition of $S$ we 
conclude that $u_{\zeta} \clj u^*_{\zeta} = \clj$ and $u_{\zeta}\Delta^{1 \over 2}u^*_{\zeta} = \Delta^{-{1 \over 2}}$. 

\vsp   
By Proposition 2.5 (d) we get 
$$u_{\zeta}\clm u^*_{\zeta}$$
$$=PU_{\zeta}P\pi(\clo_d)''PU_{\zeta}^*P$$
$$=PU_{\zeta}\pi)\clo_d)''U_{\zeta}^*P$$
$$=P\pi(\tilde{\clo}_d)''P$$
$$=\tilde{\clm}$$ 
Along the same line we also have $u\tilde{\clm}u^*=\clm$ i.e. 
$u^*\clm u=\tilde{\clm}$.

\vsp 
Since $H=S^1$, non empty set $K_{\omega}$ contains only one element and so $\tilde{\psi}=\psi$. Rest of the proof goes along the same line as of (a) without the phase factor involved in (26) and (27).      
\end{proof}

\vsp 
We make a simple observation here that $H=S^1$ for the unique KMS-state $\psi$ for the automorphisms $\alpha_t(s_k)=e^{it}s_k$ on $\clo_d$ Proposition 3.2 says that 
(26) is valid with $\zeta=1$. It is easy to check that (27) is not valid as 
$v^*_k=S^*_k$ and $\clj \tilde{v}^*_k \clj = {1 \over d}S_k$ for all $1 \le k \le d$. 
For details we refer to [BR-II] (page-110).

\vsp
Now we introduce another useful symmetry on $\omega$. If $Q= Q^{(l)}_0 \otimes Q^{(l+1)}_1 \otimes ....\otimes Q^{(l+m)}_m$ 
we set $Q^t={Q^t}^{(l)}_0 \otimes {Q^t}^{(l+1)}_1 \otimes ..\otimes {Q^t}^{(l+m)}_m$
, where $Q_0,Q_1,...,Q_m$ are arbitrary elements in $\!M_d$ and $Q_0^t,Q^t_1,..$ stands for transpose
with respect to an orthonormal basis $(e_i)$ for $\IC^d$ (not complex conjugate) of $Q_0,Q_1,..$ 
respectively. We define $Q^t$ by extending linearly for any
$Q \in \clb_{loc}$. For a state $\omega$ on $\clb$ we define
a state $\bar{\omega}$ on $\clb$ by the following prescription
\be
\bar{\omega}(Q) =
\omega(Q^t)
\ee
Thus the state $\bar{\omega}$ is a translation-invariant, ergodic, factor state,  if and only if $\omega$ is a translation-invariant, ergodic, factor state respectively. We say 
$\omega$ is {\it real } if $\bar{\omega}=\omega$. In this section we study a translation-invariant real state.

\vsp
For a $\lambda$ invariant state $\psi$ on $\clo_d$ we define a $\lambda$ invariant state $\bar{\psi}$ by
\be
\bar{\psi}(s_Is^*_J)=\psi(s_Js^*_I)
\ee
for all $|I|,|J| < \infty$ and extend linearly. That it defines a state follows as 
for an element $x= \sum c(I,J)s_Is^*_J$,  we have $\bar{\psi}(x^*x)=\psi(y^*y) \ge 0$ 
, where $y= \sum \overline{c(I,J)}s_Js_I^*$. It is obvious that $\psi \in K_{\omega}$,  if and only if $\bar{\psi}
\in K_{\bar{\omega}}$ and the map $\psi \raro \bar{\psi}$ is an affine map. In particular,  an
extremal point in $K_{\omega}$ is also mapped to an extremal point in $K_{\bar{\omega}}$. It is also
clear that $\bar{\psi} \in K_{\omega}$,  if and only if $\omega$ is real. Hence a real
state $\omega$ determines an affine bijective map $\psi \raro \bar{\psi}$ on the compact convex set $K_{\omega}$. Furthermore, if $\omega$ is also extremal on $\clb$, then the affine map takes an extremal element $\psi_0$ to another extremal element $\bar{\psi_0}$ 
of $K_{\omega}$ and there exists a $\zeta_0 \in S^1$ so that $\bar{\psi}_0 = \psi_0 \beta_{\zeta_0}$. However,  $\zeta_0$ is not uniquely determined and for any other 
$\zeta \in S^1$ for which $\zeta \zeta_0^{-1} \in H$, we also have $\tilde{\psi_0}=\psi \beta_{\zeta}$. The set of extremal elements in $K_{\omega}$ can be identified with $S^1 / H \equiv S^1$ or $\{ 1 \}$ via the map $z \raro \psi_0 \beta_z$.  

\vsp 
Since $\overline{\psi_0 \beta_z}= \bar{\psi}_0 \beta_{\bar{z}}$ for all $z \in S^1$ and the affine map $\psi \raro \bar{\psi}$ takes $\psi_0 \beta_z$ to $\psi_0 \beta_{z_0 \bar{z}}$. If $z_0=1$ the map fixes two-point namely $\psi_0$ and $\psi_0 \beta_{-1}$. Even otherwise i.e. $z_0 \neq 1$ we can choose $z \in S^1$ so that $z^2=z_0$ and verify easily that the affine map have a fixed point $\psi_0 \beta_z$. What is also crucial here that we can 
as well choose $z \in S^1$ so that $z^2 =-z_0$, if so, then 
$\psi_0 \beta_z $ gets mapped into $\psi_0 \beta_{z_0} \beta_{\bar{z}} = \psi_0 \beta_{-z}=\psi_0 \beta_z \beta_{-1}$. Thus in any case for $\zeta \in \{1,-1\}$, we also have an extremal element $\psi \in K_{\omega}$ so that $\bar{\psi} = \psi \beta_{\zeta}$.    

\vsp
Thus going back to the original set up, we sum up the above by saying that if  $H=\{z: z^n= 1 \} \subset S^1$ and 
$\zeta \in \{1,exp^{i\pi \over n } \}$ then there exists an extremal element $\psi \in K_{\omega}$ so that $\bar{\psi}
=\psi \beta_{\zeta}$.   

\vsp
\begin{pro} 
Let $\omega$ be a translation-invariant real factor state on $\otimes_{\IZ}M_d$. Then the 
following holds: 

\NI (a) if $H=\{z: z^n= 1 \} \subseteq S^1$ and $\zeta \in \{1,exp^{i\pi \over n } \}$ then there exists an extremal 
element $\psi \in K_{\omega}$ so that $\bar{\psi}=\psi \beta_{\zeta}$. Let $(\clh,\pi_{\psi}(s_k)=S_k,\;1 \le k \le d, 
\Omega)$ be the GNS representation of $(\clo_d,\psi)$, $P$ be the support projection of the state $\psi$ in $\pi(\clo_d)''$ 
and $(\clk,\clm,v_k,\;1 \le k \le d, \Omega)$ be the associated Popescu's systems as in Proposition 2.1. Let $\bar{v}_k= \clj v_k \clj$ 
for all $1 \le k \le d$ and $(\bar{\clh},\bar{S}_k,P,\Omega)$ be the Popescu's minimal dilation as described in the converse part of Proposition 2.1 (Theorem 2.1 in [Mo2]) associated with the systems $(\clk,\clm',\bar{v}_k,\;1 \le k \le d,\Omega)$. Then 
there exists a unitary operator
$W_{\zeta}: \clh \raro \bar{\clh}$ so that
\be
W_{\zeta}\Omega=\Omega,\;\;\;W_{\zeta}S_kW_{\zeta}^* = \beta_{\bar{\zeta}}(\bar{S}_k) 
\ee
for all $1 \le k \le d$. Furthermore,  $P$ is the support projection of the state $\bar{\psi}$ in $\bar{\pi}(\clo_d)''$ 
and there exists a unitary operator $w_{\zeta}$ on $\clk$ so that
\be
w_{\zeta}\Omega=\Omega, \;\;\; w_{\zeta}v_kw_{\zeta}^* = \beta_{\bar{\zeta}}(\bar{v}_k)= \clj \beta_{\zeta}(v_k) \clj
\ee
for all $1 \le k \le d$ and  $w_{\zeta} \clj w_{\zeta}^*=\clj$ and $w_{\zeta} \Delta^{1 \over 2} w_{\zeta}^* = 
\Delta^{-{1 \over 2}}$. $w_{\zeta}$ is self adjoint,  if and only if $\zeta=1$; 

\NI (b) If $H=S^1$, $K_{\omega}$ is a set with unique element $\psi$ so that $\bar{\psi}=\psi$ and relations 
(35) and (36) are valid with $\zeta=1$.   
\end{pro}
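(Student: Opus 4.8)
We only sketch the argument; it runs parallel to the proof of Proposition 3.2, with the reflection $\psi\raro\bar{\psi}$ playing the role of $\psi\raro\tilde{\psi}$. The existence part of (a) --- an extremal $\psi\in K_{\omega}$ with $\bar{\psi}=\psi\beta_{\zeta}$ for the stated $\zeta$ --- has already been extracted in the discussion preceding the proposition: reality of $\omega$ makes $\psi\raro\bar{\psi}$ an affine involution of the compact convex set $K_{\omega}$, and a continuous involution of $S^{1}/H$ either fixes a point or is the rotation by $\pi$, so Proposition 2.3 lets us choose $\psi$ extremal with $\bar{\psi}=\psi\beta_{\zeta}$, $\zeta\in\{1,e^{i\pi/n}\}$; if $H=S^{1}$ the same proposition forces $K_{\omega}=\{\psi\}$ and $\bar{\psi}=\psi$, which is case (b). Fix this $\psi$; being extremal in $K_{\omega}$ it is a factor state, so $\clm$ is a factor by Proposition 2.1(e). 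Set $\bar{v}_{k}=\clj v_{k}\clj$, $\bar{\tau}(x)=\sum_{k}\bar{v}_{k}x\bar{v}_{k}^{*}$, $\bar{\clm}=\{\bar{v}_{k}\}''$. Tomita's theorem gives $\bar{\clm}=\clj\clm\clj=\clm'$, a one-line computation gives $\bar{\tau}(x)=\clj\tau(\clj x\clj)\clj$, so $\clb(\clk)_{\bar{\tau}}=\clj\,\clb(\clk)_{\tau}\,\clj=\clj\clm'\clj=\clm=\bar{\clm}'$ by Proposition 2.1(d); one also checks that $\phi$ is $\bar{\tau}$-invariant, faithful and normal on $\bar{\clm}=\clm'$. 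The converse part of Proposition 2.1, applied to $(\clk,\clm',\bar{v}_{k},\Omega)$, then identifies $P$ with the support projection of the $\lambda$-invariant state $s_{I}s_{J}^{*}\mapsto\phi(\bar{v}_{I}\bar{v}_{J}^{*})=\bar{\psi}(s_{I}s_{J}^{*})$ in $\bar{\pi}(\clo_{d})''$ and realizes $\bar{\clh}$ as the GNS space of $\bar{\psi}$; in particular $\Omega$ is cyclic for $\{\bar{S}_{I}\bar{S}_{J}^{*}\}$ in $\bar{\clh}$. This settles the clause ``$P$ is the support projection of $\bar{\psi}$''. I expect this to be the delicate point, since every subsequent compression from $\bar{\clh}$ to $\clk$ rests on it, and it is here that compatibility of the reflection $v_{k}\mapsto\bar{v}_{k}$ with the Tomita--Takesaki identity $\clj\clm\clj=\clm'$ is used.

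Next I would build $W_{\zeta}\colon\clh\raro\bar{\clh}$ by $W_{\zeta}\,S_{I}S_{J}^{*}\Omega=\beta_{\bar{\zeta}}(\bar{S}_{I}\bar{S}_{J}^{*})\Omega$. Cuntz words multiply by the same combinatorial rule in $\pi(\clo_{d})$ and in $\bar{\pi}(\clo_{d})$, and $\langle\Omega,\bar{S}_{K}\bar{S}_{L}^{*}\Omega\rangle=\bar{\psi}(s_{K}s_{L}^{*})$; so the identity $\bar{\psi}\circ\beta_{\bar{\zeta}}=\psi$ (a consequence of $\bar{\psi}=\psi\beta_{\zeta}$ and $\zeta\bar{\zeta}=1$) shows at one stroke that $W_{\zeta}$ is well defined and isometric, cyclicity of $\Omega$ makes its range dense, so $W_{\zeta}$ is unitary with $W_{\zeta}\Omega=\Omega$ and $W_{\zeta}S_{k}=\beta_{\bar{\zeta}}(\bar{S}_{k})W_{\zeta}$. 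Then $W_{\zeta}$ carries the $C^{*}$-algebra $\pi(\clo_{d})$ onto $\bar{\pi}(\clo_{d})$, whence $W_{\zeta}\pi(\clo_{d})'W_{\zeta}^{*}=\bar{\pi}(\clo_{d})'$, and with $W_{\zeta}\Omega=\Omega$ this gives $W_{\zeta}PW_{\zeta}^{*}=P$ ($P=[\pi(\clo_{d})'\Omega]$ on either side). Hence $w_{\zeta}:=PW_{\zeta}P$ is unitary on $\clk$; compressing the covariance and using $PS_{k}^{*}P=v_{k}^{*}$, $P\bar{S}_{k}^{*}P=\bar{v}_{k}^{*}$ gives $w_{\zeta}\Omega=\Omega$ and $w_{\zeta}v_{k}w_{\zeta}^{*}=\beta_{\bar{\zeta}}(\bar{v}_{k})=\clj\beta_{\zeta}(v_{k})\clj$, i.e.\ (36).

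It remains to fix the Tomita data of $w_{\zeta}$ and the self-adjointness claims, and here I would reproduce the end of the proof of Proposition 3.2. Using $w_{\zeta}v_{I}v_{J}^{*}w_{\zeta}^{*}=\bar{\zeta}^{\,|I|-|J|}\bar{v}_{I}\bar{v}_{J}^{*}$ one checks that the closures of $S\colon v_{I}v_{J}^{*}\Omega\mapsto v_{J}v_{I}^{*}\Omega$ and $F=S^{*}$ satisfy $w_{\zeta}S=Fw_{\zeta}$ on the core $\clm\Omega$ --- both sides send $v_{I}v_{J}^{*}\Omega$ to $\zeta^{\,|I|-|J|}\bar{v}_{J}\bar{v}_{I}^{*}\Omega$, the antilinearity of $F$ and $\bar{\zeta}=\zeta^{-1}$ making the phases agree --- so $w_{\zeta}\clj\Delta^{1/2}w_{\zeta}^{*}=\clj\Delta^{-1/2}$, and uniqueness of the polar decomposition forces $w_{\zeta}\clj w_{\zeta}^{*}=\clj$, $w_{\zeta}\Delta^{1/2}w_{\zeta}^{*}=\Delta^{-1/2}$. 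From (36) one then reads off $w_{\zeta}\clm w_{\zeta}^{*}=\{\bar{v}_{k}\}''=\clm'$ and $w_{\zeta}^{*}\clm w_{\zeta}=\clm'$. The self-adjointness properties are handled as in Proposition 3.2: combining (36) with $w_{\zeta}\clj w_{\zeta}^{*}=\clj$ one finds that the relevant power of $w_{\zeta}$ (one treats $w_{\zeta}$ itself when $\zeta=1$) conjugates each $v_{k}$ trivially and is $\clj$-invariant, hence lies in $\clm\cap\clm'=\IC I$ as $\clm$ is a factor, and fixes $\Omega$, so equals $I$. Finally part (b) is the whole argument run with $\zeta=1$ and no phase factors, using in addition that $H=S^{1}$ forces $K_{\omega}=\{\psi\}$ and $\bar{\psi}=\psi$.
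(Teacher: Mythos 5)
Your proposal is correct and follows essentially the same route as the paper: the same verification that $\clb(\clk)_{\bar{\tau}}=\bar{\clm}'$ via $\clj \clm \clj=\clm'$ to identify $P$ as the support projection of $\bar{\psi}$, the same construction of $W_{\zeta}$ and of $w_{\zeta}=PW_{\zeta}P$, and the same polar-decomposition and factor arguments for the Tomita data and self-adjointness. One small correction to your final parenthetical: the element that conjugates each $v_k$ trivially and is then shown to be the scalar $I$ is $w_{\zeta}^{2}$, not $w_{\zeta}$ itself when $\zeta=1$ (indeed $w_{\zeta}v_kw_{\zeta}^{*}=\clj v_k\clj \neq v_k$ in general); it is the identity $w_{\zeta}^{2}=I$ that yields $w_{\zeta}^{*}=w_{\zeta}$, exactly as in the paper's computation of $w_{\zeta}^{2}v_k(w_{\zeta}^{*})^{2}=\beta_{\bar{\zeta}^{2}}(v_k)$.
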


\vsp
\begin{proof} 
For existence part in (a) we refer to the paragraph above preceded the statement of the proposition. 
We fix a state $\psi \in K_{\omega}$ so that $\bar{\psi}=\psi  \beta_{\zeta}$ and define $W:\clh \raro \bar{\clh}$ by
$$W_{\zeta}: S_IS^*_J\Omega \raro \beta_{\bar{\zeta}}(\bar{S}^*_{I}\bar{S}^*_{J})\Omega$$
That $W_{\zeta}$ is a unitary operator follows from (30) and thus
$W_{\zeta}S_k= \beta_{\bar{\zeta}}(\bar{S}_k)W_{\zeta}$ for all $1 \le k \le d$. 

\vsp 
By Proposition 2.1, $P$ being the support projection of $\psi$ in $\pi(\clo_d)''$, we have $\clm'= \{ x \in \clb(\clh): \sum_k v_k x v_k^*= x \}$. 
Since $\clj \clm' \clm =\clm$, we also have $\clm = \{ x \in \clb(\clk): \sum_k \clj v_k \clj x \clj v_k^* \clj = x \}$. Hence by the converse part of Proposition 2.1, we conclude that $P$ is also the support projection of the state $\bar{\psi}$ in $\bar{\pi}(\clo_d)''$. 
Hence $W_{\zeta}PW_{\zeta}^*=P$. Thus we define a unitary operator $w_{\zeta}: \clk \raro \clk$ by
$w_{\zeta}= PW_{\zeta}P $ and verify that
$$\bar{v}^*_k = P\bar{S}^*_kP$$
$$ = PW_{\zeta} \beta_{\zeta}(S^*_k)W_{\zeta}^*P = PW_{\zeta}P \beta_{\zeta}(S^*_k)PW_{\zeta}^*P$$
$$= PW_{\zeta}P\beta_{\zeta}(v^*_k)PW_{\zeta}^*P=w_{\zeta}\beta_{\zeta}(v^*_k)w_{\zeta}^*.$$

\vsp
We recall that Tomita's conjugate linear operators $S,F$ [BR-I] are
the closure of the linear operators defined by $S_0:a\Omega \raro a^*\Omega$ for $a \in \clm$ and $F_0:b\Omega \raro b^*\Omega$ for $y \in \clm'$. We check the following equalities
$$w_{\zeta}Sv_Iv^*_J\Omega= w_{\zeta} v_Jv^*_I\Omega= w_{\zeta}v_Jv^*_Iw^*_{\zeta}\Omega$$
$$=\zeta^{|I|-|J|} \bar{v}_J \bar{v}^*_I\Omega= \zeta^{|I|-|J|} F \bar{v}_I\bar{v}^*_J\Omega$$
$$=F\zeta^{-|I|+|J|} \bar{v}_I\bar{v}^*_J\Omega$$
$$=F w_{\zeta} v_Iv^*_J\Omega$$
for all $|I|,|J| < \infty$. Thus $w_{\zeta}Sw^*_{\zeta}=F$ on the domain of $F$. 
We recall the unique polar decomposition of $S$ given by $S=\clj \Delta^{1 \over 2}$ and 
$F=S^*=\Delta^{1 \over 2} \clj= \clj \Delta^{-{1 \over 2}}$. Hence $w_{\zeta} \clj w_{\zeta}^*w_{\zeta}\Delta^{1 \over 2}w^*_{\zeta}=\clj \Delta^{-{1 \over 2}}$.
By the uniqueness of polar decomposition of $S$ we get $w_{\zeta} \clj w^*_{\zeta}=\clj$ and $w_{\zeta} \Delta^{1 \over 2} w^*_{\zeta}= \Delta^{-{1 \over 2}}$.

\vsp  
Since $\clj$ commutes with $w_{\zeta}$ we also have  
$$w_{\zeta}^2 v_k (w_{\zeta}^*)^2$$
$$=w_{\zeta} \clj \beta_{\zeta}(v_k) \clj w_{\zeta}^*$$
$$=\clj w_{\zeta} \beta_{\zeta}(v_k) w_{\zeta}^* \clj$$
$$=\clj \clj \beta_{\zeta^2}(v_k) \clj \clj$$
$$=\beta_{\bar{\zeta}^2}(v_k)$$ 

\vsp 
Further $\zeta^2=1$,  if and only if $\zeta=1$ ( as $\zeta=1$ or $exp^{i \pi \over n}$,  where $n \ge 2$ ). In such a case we get $w_{\zeta}^2 \in \clm'$ and further 
as $w_{\zeta}$ commutes with $\clj$, $w_{\zeta}^2 =\clj w_{\zeta}^2 \clj \in \clm$. $\omega$ being an extremal element in $K_{\omega}$,  we have $\clm \vee \tilde{\clm}=\clb(\clk)$ by Proposition 2.5 (c) and as $\tilde{\clm} \subseteq \clm'$, we get that $\clm$ is a factor. Thus for a factor $\clm$, $w_{\zeta}^2$ is a scalar. Since 
$w_{\zeta}\Omega=\Omega$ we get $w_{\zeta}^2=1$ i.e. $w^*_{\zeta}=w_{\zeta}$. This completes the proof. 
\end{proof} 

\vsp
\begin{thm} 
Let $\omega$ be a translation-invariant factor state on $\clb$. Then the following 
are equivalent:

\NI (a) $\omega$ is real and lattice reflection-symmetric;

\NI (b) There exists an extremal element $\psi \in K_{\omega}$ so that $\tilde{\psi}=\psi \beta_{\zeta}$ and 
$\bar{\psi}=\psi \beta_{\zeta}$, where $\zeta$ is either $1$ or $exp^{i\pi \over n }$.    

\vsp
Furthermore,  if $\omega$ is a pure state then the following holds:

\NI (c) There exists a family of a Popescu's element $(\clk,v_k,\;1 \le k \le d,\Omega)$ for an extremal element $\psi \in K_{\omega}$ with relation  
\be 
v_k = \clj_v \tilde{v}_k \clj_v 
\ee 
for all $1 \le k \le d$, where $\clj_v=v\clj$ and $v$ is a self-adjoint unitary 
operator on $\clk$ commuting with modular operators $\Delta^{1 \over 2}$ and conjugate operator $\clj$ associated 
with cyclic and separating vector $\Omega$ for $\clm$. Furthermore,  $\beta_z(v)=v$ for all $z \in H$ and  
$H \subseteq \{1, -1 \}$;

\NI (d) The map $\clj_v: \clh \otimes_{\clk} \tilde{\clh} \raro \clh \otimes_{\clk} \tilde{\clh}$ defined by 
$$\pi(s_Is^*_J\tilde{s}_{I'}\tilde{s}^*_{J'})\Omega 
\raro \pi(s_{I'}s^*_{J'}\tilde{s}_{I}\tilde{s}^*_{J})\Omega,$$
$\;|I|,|J|,|I'|,|J'| < \infty $ extends the map $\clj_v: \clk  \raro  \clk$ to an anti-unitary map so that $\clj_v 
\pi(s_i) \clj_v = \bar{\pi}(\tilde{s}_i)$ for all $ 1 \le i \le d$,  where
$\bar{\pi}$ is the conjugate linear extension of $\pi$ from the generating 
set $(\tilde{s}_i)$, i.e. $\bar{\pi}(\tilde{s}_I\tilde{s}^*_J) = \pi(\tilde{s}_I\tilde{s}^*_J)$ for $|I|,|J| < \infty$ 
and then extend it anti-linearly for its linear combinations. 
\end{thm}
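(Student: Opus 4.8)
The plan is to deduce Theorem 3.4 by combining the two symmetry analyses already carried out, namely Proposition 3.2 (lattice reflection symmetry) and Proposition 3.3 (reality), and to reconcile the two phase factors that arise. For the equivalence of (a) and (b): if $\omega$ is real and lattice reflection-symmetric, then Proposition 3.1 produces an extremal $\psi \in K_\omega$ with $\tilde\psi = \psi\beta_\zeta$, and the paragraph preceding Proposition 3.3 produces (possibly a different) extremal element with $\bar\psi = \psi\beta_{\zeta'}$. The subtlety is that these two constructions each only pin down the extremal element up to the action of $S^1/H$, and a priori need not be compatible. So the first real step is to run the fixed-point/rotation argument of Proposition 3.1 \emph{once more}, but now for the composite affine involution $\psi \mapsto \bar{\tilde\psi}$ (equivalently $\tilde{\bar\psi}$) on $K_\omega$, and simultaneously for each of $\psi\mapsto\tilde\psi$ and $\psi\mapsto\bar\psi$: I would argue that one can choose a single extremal $\psi$ that is a fixed point (up to the fixed phase $\zeta \in \{1, \exp^{i\pi/n}\}$) of both maps at once, using that $\widetilde{\psi\beta_z} = \tilde\psi\beta_z$ while $\overline{\psi\beta_z} = \bar\psi\beta_{\bar z}$, so the two induced maps on the circle $S^1/H$ are a translation-type map and a reflection-type map and their simultaneous normalization is an elementary exercise on $S^1$. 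The converse (b)$\Rightarrow$(a) is immediate: $\bar\psi=\psi\beta_\zeta$ forces $\bar\omega = \bar{\psi}|_{\mathrm{UHF}_d} = \omega_R = \omega$ after restriction (the gauge factor is killed on $\mathrm{UHF}_d$), and likewise $\tilde\psi=\psi\beta_\zeta$ gives $\tilde\omega=\omega$.

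For the pure case, statements (c) and (d) are where the work lies. The strategy is to feed the extremal $\psi$ from (b) into both Proposition 3.2(a) and Proposition 3.3(a), obtaining a unitary $u_\zeta$ on $\clk$ with $u_\zeta v_k u_\zeta^* = \beta_{\bar\zeta}(\tilde v_k)$, $u_\zeta\clj u_\zeta^*=\clj$, $u_\zeta\Delta^{1/2}u_\zeta^* = \Delta^{-1/2}$, $u_\zeta\clm u_\zeta^*=\clm'$, and a unitary $w_\zeta$ on $\clk$ with $w_\zeta v_k w_\zeta^* = \clj\beta_\zeta(v_k)\clj$, $w_\zeta\clj w_\zeta^*=\clj$, $w_\zeta\Delta^{1/2}w_\zeta^*=\Delta^{-1/2}$, $w_\zeta$ self-adjoint iff $\zeta=1$. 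The key move is to set $v := w_\zeta^* u_\zeta$ (or a symmetrized version thereof). Then one computes $v\,\clj\,\tilde v_k\,\clj\,v^* $: since $u_\zeta$ conjugates $v_k$ to $\beta_{\bar\zeta}(\tilde v_k)$ and $w_\zeta$ intertwines $v_k$ with $\clj\beta_\zeta(v_k)\clj$, the composite should return $v_k$ with the two phase factors $\zeta$ and $\bar\zeta$ cancelling, giving exactly $v_k = \clj_v \tilde v_k \clj_v$ with $\clj_v = v\clj$. That $v$ is unitary and commutes with $\clj$ and $\Delta^{1/2}$ follows because both $u_\zeta$ and $w_\zeta$ individually commute with $\clj$ and both conjugate $\Delta^{1/2}$ to $\Delta^{-1/2}$, so their composite commutes with $\Delta^{1/2}$. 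Self-adjointness of $v$ and the restriction $H\subseteq\{1,-1\}$ I would extract from the factoriality of $\clm$ (Proposition 2.5(c), via $\clm\vee\tilde\clm = \clb(\clk)$ and $\tilde\clm\subseteq\clm'$) exactly as in the final paragraph of the proof of Proposition 3.3: a unitary fixing $\Omega$ whose square is central must be $1$; and $\beta_z(v)=v$ for $z\in H$ forces, together with the phase bookkeeping, that $H$ can contain only $\pm 1$.

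Finally, for (d) the plan is purely bookkeeping: having $v$ on $\clk$ as above and the amalgamated representation $\pi$ of $\tilde\clo_d\otimes\clo_d$ of Proposition 2.5, I define $\clj_v$ on the dense set $\pi(s_I s_J^* \tilde s_{I'}\tilde s_{J'}^*)\Omega$ by the swap $\mapsto \pi(s_{I'}s_{J'}^*\tilde s_I\tilde s_J^*)\Omega$, check it is anti-unitary by verifying it preserves the semi-inner product of the amalgamated Hilbert space using Proposition 2.5(a) together with the intertwining relation from (c) (the same kind of five-line inner-product computation that appears in the proof of Proposition 3.2), confirm that it restricts on $\clk$ to the $\clj_v$ built in (c), and read off $\clj_v\pi(s_i)\clj_v = \bar\pi(\tilde s_i)$ directly from the definition of $\bar\pi$. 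The main obstacle I anticipate is the phase-compatibility step: ensuring that a \emph{single} extremal $\psi$ simultaneously satisfies $\tilde\psi=\psi\beta_\zeta$ and $\bar\psi=\psi\beta_\zeta$ with the \emph{same} $\zeta$, so that the cancellation $\zeta\cdot\bar\zeta = 1$ in the computation of $v\clj\tilde v_k\clj v^*$ actually occurs; if the two constructions forced different phases one would be left with a residual gauge twist in (37). Handling this cleanly requires care with the identification $S^1/H\equiv S^1$ and with how $\zeta$ (as opposed to its class mod $H$) is selected, which is why I would isolate it as a preliminary lemma on the circle before touching the operator algebra.
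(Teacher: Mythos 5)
Your proposal is correct and follows essentially the same route as the paper: combine Propositions 3.2 and 3.3 to obtain $u_{\zeta}$ and $w_{\zeta}$, show they commute via factoriality of $\clm$, take $v$ to be their product, and deduce self-adjointness, $\beta_z(v)=v$ and $H\subseteq\{1,-1\}$ from centrality of $v^2$ and the phase identity applied to $vv_kv^*=\clj\tilde{v}_k\clj$, with (d) reduced to the same inner-product computation on the amalgamated space. The phase-compatibility obstacle you isolate is resolved in the paper without any extra circle lemma: Proposition 3.2 already yields $\tilde{\psi}=\psi\beta_{\zeta}$ with the \emph{same fixed} $\zeta$ for \emph{every} extremal $\psi\in K_{\omega}$, while the discussion preceding Proposition 3.3 shows the real symmetry can be realized with either admissible value of $\zeta$ by a suitable choice of extremal element, so one simply picks that element to match the $\zeta$ forced by reflection symmetry.
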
 

\vsp
\begin{proof} 
Since $\omega$ is lattice reflection-symmetric, by Proposition 3.2 $\tilde{\psi}= \psi \beta_{\zeta}$ 
for all $\psi \in K_{\omega}$,  where $\zeta$ is a fixed number taking values in 
$\{1,exp^{i \pi \over n}\}$ if $H=\{z:z^n=1 \}$ for some $n \ge 1$. Now we use real property of $\omega$ and choose by Proposition 3.3 an extremal element $\psi \in K_{\omega}$ so that $\bar{\psi}=\psi \beta_{\zeta}$. This proves that (a) implies (b). 
That (b) implies (a) is obvious.

\vsp
Now we aim to prove the last statements which is the main point of the proposition. We 
consider the Popescu's element $(\clk,\clm,v_k,\;1 \le k \le d,\Omega)$ as in Proposition 3.2. Thus by Proposition 3.2 and Proposition 3.3 there exists unitary operators $u_{\zeta},w_{\zeta}$ on $\clk$ so that
$$u_{\zeta}v_ku_{\zeta}^*=\beta_{\bar{\zeta}}(\tilde{v}_k)$$
$$w_{\zeta}v_kw_{\zeta}^* = \beta_{\bar{\zeta}}(\bar{v}_k) = \clj \beta_{\zeta}(v_k) \clj $$
, where $\;\;u_{\zeta} \clj u_{\zeta}^*=\clj, \;\; 
\;w_{\zeta} \clj w_{\zeta}^*=\clj$ and
$u_{\zeta} \Delta^{1 \over 2}u_{\zeta}^*=w_{\zeta} \Delta^{1 \over 2}w^*_{\zeta}=\Delta^{-{1 \over 2}}$.
Thus
\be
u_{\zeta} w_{\zeta} v_k w_{\zeta}^* u_{\zeta}^* = u_{\zeta} \clj \beta_{\zeta}(v_k) \clj u_{\zeta}^* 
= \clj \beta_{\zeta}(u_{\zeta}v_ku_{\zeta}^*) \clj = \clj \beta_{\zeta}(\beta_{\bar{\zeta}}(\tilde{v}_k))\clj=
\clj \tilde{v}_k \clj
\ee

We also compute that
\be
w_{\zeta}u_{\zeta}v_ku_{\zeta}^*w_{\zeta}^* = w_{\zeta} \beta_{\bar{\zeta}}(\tilde{v}_k) w_{\zeta}^* = 
\clj \beta_{\zeta} \beta_{\bar{\zeta}}(\tilde{v}_k) \clj = \clj \tilde{v}_k \clj 
\ee

\vsp
By Proposition 2.5 (c), for a factor state $\omega$ we also have $\clm \vee \tilde{\clm} = \clb(\clk)$. As $\tilde{\clm} \subseteq \clm'$, in particular,  
we note that $\clm$ is a factor. So $u_{\zeta}^*w_{\zeta}^* u_{\zeta}w_{\zeta} \in \clm'$ commuting also with $\clj$ and thus a scalar as $\clm$ is a factor. 
As $u_{\zeta}\Omega=w_{\zeta}\Omega=\Omega$, we conclude that $u_{\zeta}$ commutes with $w_{\zeta}$.  

\vsp
Now we set $v_{\zeta}=u_{\zeta}w_{\zeta}$ which is a unitary operator commuting with both $\clj$ and
$\Delta^{1 \over 2}$. That $v_{\zeta}$ is commuting with $\Delta^{1 \over 2}$ follows as $u_{\zeta}w_{\zeta}\Delta^{1 \over 2}=
u_{\zeta}\Delta^{- {1 \over 2}}w_{\zeta}= \Delta^{1 \over 2} u_{\zeta}w_{\zeta}$. 

\vsp
We claim now that $v_{\zeta}$ is a self-adjoint element. To that end we note the relation
(38) says that $v_{\zeta}\clm v_{\zeta}^* \subseteq \clm$ and so 
$v_{\zeta}\clm'v_{\zeta}^* \subseteq \clm'$. Now we check the following identity:
$v_{\zeta}\tilde{v}^*_kv_{\zeta}^*\Omega
=v_{\zeta}v_k^*\Omega=v_{\zeta}v_k^*v_{\zeta}^*\Omega
=\clj \tilde{v}_k^* \clj \Omega
=\clj v_k \clj \Omega$
for all $1 \le k \le d$. Thus by separating property of $\Omega$ for $\clm'$ we deduce that 
$$v_{\zeta}\tilde{v}_k^*v_{\zeta}^*=\clj v_k \clj$$
for all $1 \le k \le d$. So we conclude that $v^2_{\zeta} \in \clm'$ and as $v_{\zeta}$ commutes with $\clj$, $v_{\zeta}^2$ is an element in the 
centre of $\clm$. The centre of $\clm$ being trivial as $\omega$ is a factor state ( here we have more namely pure )
and $v_{\zeta}\Omega=\Omega$, we conclude that $v^2_{\zeta}$ is the unit operator. Hence $v_{\zeta}$ is a self-adjoint element. 

\vsp 
For simplicity of notation we set $v$ for $v_{\zeta}$ for the rest of the proof and 
verify now that $\beta_z(v)=v$ for all $z \in H$ by proving that $v$ commutes 
$P_k$ for each $k \in \hat{H}$. Since $vv_Iv_J^*v^*=\clj \tilde{v}_I\tilde{v}^*_J\clj$ 
with $v\Omega=\Omega$, we get 
$$vv_Iv_J^*\Omega=\clj \tilde{v}_I\tilde{v}^*_J\Omega$$
for all $I|,|J| < \infty$. However,  we recall that $P_k=\{f \in \clk: u_zf=z^kf \}$ 
which is equal to $[v_Iv^*_J\Omega: |I|-|J|=k]$ since $[\clm\Omega]=\clk$ by our construction in Proposition 2.1. It is clear also that the projection  $\tilde{P}_k=[\tilde{v}_I\tilde{v}^*_J\Omega: |I|-|J|=k ] \subseteq P_k$ for all $k \in \hat{H}$ since $(u_z)$ commutes with $\clj$ and $\Delta^{1 \over 2}$,  we have
$\beta_z(\tilde{v}_i)=z\tilde{v}_i$ for all $1 \le i \le d$. However,  $\omega$ being 
pure,  we have $\tilde{\clm}=P\pi(\tilde{\clo}_d)P$ and $\tilde{\clm}=\clm'$ by Proposition 2.5 (d). So $\sum_k \tilde{P}_k =[\tilde{\clm}\Omega]=I_{\clk}$. Thus $P_k=\tilde{P}_k$ 
for all $k \in \hat{H}$.  

\vsp 
So we get $(I-P_k)vP_k=0$ as $\clj$ commutes with $P_k$. We can interchange the role of 
$(v_i)$ and $(\tilde{v}_i)$ in order to conclude 
that $(I-P_k)v^*P_k=0$ for all $k \in \hat{H}$. Otherwise we can as well use 
the fact that $v$ is self-adjoint to conclude that $v$ commutes with each $P_k$. 
This shows that $v$ commutes with $u_z$ for all $z \in H$ i.e. $\beta_z(v)=v$. 

\vsp 
Fix any $z \in H$. By taking action of $\beta_z$ on both sides of the relation $vv_kv^*=\clj \tilde{v}_k \clj$, we have 
$vv_kv^* = \bar{z}^2  \clj \tilde{v}_k \clj  = \bar{z}^2 vv_kv^*$. Thus $z^2v_k=v_k$ for all 
$1 \le k \le d$. Since $\sum_k v_kv_k^*=1$, we have $z^2=1$. Thus $H \subseteq \{-1,1 \}$.   

\vsp 
The last statement (d) follows by a routine calculation as shown below for a special vectors.
$$ \langle \Omega,\pi( \tilde{s}_{I'}\tilde{s}^*_{J'}s_Is^*_J )\Omega \rangle $$
$$= \langle \Omega,  \tilde{v}_{I'}\tilde{v}^*_{J'} v_Iv_J^* \Omega \rangle $$
$$= \langle \Omega,\clj_v v_{I'}v^*_{J'} \tilde{v}_I\tilde{v}_J^* \clj _v\Omega \rangle  $$ 
( as $\clj_v v_i \clj_v = \tilde{v}_i$)
$$= \langle v_{I'}v^*_{J'} \tilde{v}_I\tilde{v}_J^*\Omega,\Omega \rangle $$ ($\clj_v$ being anti-unitary )
$$= \langle \tilde{v}_I\tilde{v}_J^* v_{I'}v^*_{J'} \Omega,\Omega \rangle$$
($\tilde{\clm} \subseteq \clm'$) 
$$= \langle \pi(\tilde{s}_I\tilde{s}^*_Js_{I'}s^*_{J'})\Omega,\Omega \rangle $$ 

\vsp 
For anti-unitary relation involving more general vectors, we use Cuntz relations (10) as in Proposition 3.2 to reduce the inner product between two elements to the above special 
case. The last statement is obvious as $\clj_v$ is anti-linear. This completes the proof.
\end{proof} 

\vsp 
Now we aim to deal with states $\omega$ having reflection symmetry with a twist $g_0 \in U_d(\IC)$ introduced in [FILS]. To that end we fix any $g_0 \in U_d(\IC)$ so that $g_0^2=1$ and $\beta_{g_0}$ is the natural action on $\clo_d$ and $\tilde{\clo}_d$. We say $\omega$ is {\it lattice reflection symmetric with twist } $g_0$ 
if $\omega(\beta_{g_0}(r(x))=\omega(x)$ for all $x \in \clb$,  where $r$ is the reflection automorphism around ${-{1 \over 2}}$. So when 
$g_0=1$ we get back to our notion of lattice reflection symmetric. We fix now such a lattice reflection 
$g_0$-twisted factor state $\omega$. Since $\beta_{g_0}\beta_z=\beta_z \beta_{g_0}$ for all $z \in S^1$, 
by going along the same line as in Proposition 3.2, any extremal element in $\psi$ in $K_{\omega}$ will 
admit $\tilde{\psi}^{g_0}=\psi \circ \zeta$,  where $\zeta=1$ or $\zeta = exp^{\pi i \over n }$ 
, where $H=\{z \in S^1: z^n=1 \}$ and $\tilde{\psi}^{g_0} = \tilde{\psi} \beta_{g_0}$. Thus we can follow the same
steps as in the proof of Proposition 3.4 to have modified statements of Proposition 3.4 with $v_k$ replaced by $\beta_{g_0}(v_k)$ for such a pure real state i.e. there exists unitary operators $u_{\zeta},w_{\zeta}$ on $\clk$ 
so that
$$u_{\zeta}\beta_{g_0}(v_k)u_{\zeta}^*=\beta_{\bar{\zeta}}(\tilde{v}_k)$$
$$w_{\zeta}v_kw_{\zeta}^* = \beta_{\bar{\zeta}}(\bar{v}_k) = \clj \beta_{\zeta}(v_k) \clj $$
, where $\;\;u_{\zeta} \clj u_{\zeta}^*=\clj, \;\; 
\;w_{\zeta} \clj w_{\zeta}^*=\clj$ and
$u_{\zeta} \Delta^{1 \over 2}u_{\zeta}^*=w_{\zeta} \Delta^{1 \over 2}w^*_{\zeta}=\Delta^{-{1 \over 2}}$.

Thus
\be
u_{\zeta} w_{\zeta} v_k w_{\zeta}^* u_{\zeta}^* = u_{\zeta} \clj \beta_{\zeta}(v_k) \clj u_{\zeta}^* 
= \clj \beta_{\zeta}(u_{\zeta}v_ku_{\zeta}^*) \clj = \clj \beta_{\zeta}(\beta_{g_0}(\beta_{\bar{\zeta}}(\tilde{v}_k)))\clj= \clj \beta_{g_0}(\tilde{v}_k) \clj
\ee

We also compute that
\be
w_{\zeta}u_{\zeta}v_ku_{\zeta}^*w_{\zeta}^* = w_{\zeta} \beta_{\bar{\zeta}}(\beta_{g_0}(\tilde{v}_k)) w_{\zeta}^* 
=\beta_{\bar{\zeta}}(\beta_{g_0}(w_{\zeta}\tilde{v}_k w_{\zeta}^*)) 
=\clj \beta_{\zeta} \beta_{\bar{g_0}}(\beta_{\bar{\zeta}}(\tilde{v}_k)) \clj 
= \clj \beta_{\bar{g_0}}(\tilde{v}_k) \clj 
\ee

Thus taking $v_{g_0}=w_{\zeta}u_{\zeta}$, as $g_0^2=I$ we also have 
\be 
v_{g_0}\beta_{g_0}(v_k)v^*_{g_0} = \clj \tilde{v}_k \clj
\ee 
for all $1 \le k \le d$,  where $v_{g_0}$ is a unitary operator 
commuting with $\Delta^{1 \over 2}$ and $\clj$. Now we check the following identities:
$$v_{g_0}\beta_{g_0}(\tilde{v}_k^*)v^*_{g_0}\Omega$$
$$=v_{g_0}\beta_{g_0}(v_k^*)v^*_{g_0}\Omega\;\; (\mbox{since}\;\; \tilde{v}_k^*\Omega=v_k^*\Omega)$$
$$=\clj \tilde{v}_k^*\clj\Omega\;\;(\mbox{by the relation}\; (42) )$$
$$=\clj v_k^* \clj\Omega\;\;(\mbox{by the relation}\; (25) )$$ 
Since $v_{g_0} \clm v_{g_0}^* = \clm$, we get $v_{g_0}\clm'v_{g_0}^*=\clm'$ and 
so the separating property of $\Omega$ for $\clm'$ and the above identities say that 
$$v_{g_0}\beta_{g_0}(\tilde{v}_k)v_{g_0}^*=\clj v_k \clj$$ 
for all $1 \le k \le d$. 

\vsp 
Unlike the twist free case, $v_{g_0}$ need not be self-adjoint in general. In fact 
we get the following identities:  
$$v_{g_0}^2v_k(v^*_{g_0})^2=v_{g_0} \clj \beta_{\bar{g_0}}(\tilde{v}_k) \clj v^*_{g_0}$$
$$=\clj \beta_{\bar{g_0}} (\clj \beta_{\bar{g_0}}(v_k) \clj)\clj=\beta_{\bar{g_0}g_0}(v_k)$$   
Thus $v_{g_0}$ is self adjoint,  if and only if $g_0=\bar{g_0}$ as $g_0^2=1$.  

\vsp 
Nevertheless,  we have $\beta_z(v_{g_0})=v_{g_0}$ for all $z \in H$ even if $\omega$ is  reflection symmetric with a twist $g_0$. For a proof,  we can follow the same steps that we did for the twist free case. For the sake of completeness in the following we give details. 

\vsp 
Since $v_{g_0}\beta_{g_0}(v_Iv_J^*)v^*_{g_0}=\clj \tilde{v}_I\tilde{v}^*_J\clj$ 
and $v^*_{g_0}\Omega=\Omega$ we get 
$$v_{g_0}\beta_{g_0}(v_Iv_J^*)\Omega=\clj \tilde{v}_I\tilde{v}^*_J\Omega$$
Furthermore,  since $[v_Iv^*_J\Omega: |I|-|J|=k]=P_k$ and also $\omega$ being pure  $[\tilde{v}_I\tilde{v}^*_J\Omega: |I|-|J|=k ]=P_k$ (see details in Proposition 3.4)  
we get 
$$(I-P_k)v_{g_0}P_k \beta_{g_0}(v_Iv_J^*)\Omega$$
$$(I-P_k)v_{g_0}P_k \beta_{g_0}(v_Iv_J^*)v_{g_0}^*\Omega$$
$$=(1-P_k)\clj \tilde{v}_I\tilde{v}^*_J \clj\Omega$$
$$=(I-P_k)\clj P_k \tilde{v}_I\tilde{v}_J^*\Omega$$
$$=0$$ 
for $|I|-|J|=k$ since $\clj$ commutes with $P_k$. We can interchange the 
role of $(v_i)$ and $(\tilde{v}_i)$ to conclude 
that $(I-P_k)v^*_{g_0}P_k=0$ for all $k \in \hat{H}$. This shows that $v_{g_0}$ 
commutes with $u_z$ for all $z \in H$ i.e. $\beta_z(v_{g_0})=v_{g_0}$ for 
all $z \in H$.

\vsp 
Fix any $z \in H$. By taking action of $\beta_z$ on both sides of the relation $v_{g_0}\beta_{g_0}(v_k)v_{g_0}^*=\clj \tilde{v}_k \clj$, we have 
$v_{g_0}v_kv^*_{g_0} = \bar{z}^2  \clj \tilde{v}_k \clj  = \bar{z}^2 v_{g_0}\beta_{g_0}(v_k)v^*_{g_0}$ since $u_z$ commutes with $\clj$ and $v_{g_0}$. Thus $z^2=1$ 
since $\sum_k v_kv_k^*=1$. Thus $H \subseteq \{-1,1 \}$.   

\vsp 
We set an anti-linear $*$-automorphism $\clj_{g_0}: \clo_d \otimes \tilde{\clo}_d \raro  \clo_d \otimes \tilde{\clo}_d$ 
defined by 
$$\clj_{g_0}(s_Is^*_J \otimes \tilde{s}_{I'}\tilde{s}^*_{J'}) = \beta_{g_0}(s_{I'}s^*_{J'}) \otimes \beta_{g_0}(\tilde{s}_I\tilde{s}^*_{J})$$
for $|I|,|J|,|I'|,|J'|  <  \infty$ by extending anti-linearly. 

\vsp 
We say a state $\psi$ on $\clo_d \otimes \tilde{\clo}_d$ 
is {\it reflection positive with a twist $g_0$} if $\psi(\clj_{g_0}(x)x) \ge 0$ for all $x \in \clo_d$ and equality holds,  if and only if $x=0$. Similarly a state $\omega$ on $\clb$ is called {\it reflection positivity with a twist $g_0$ } if 
$$\omega(\overline{\beta_{g_0}(\tilde{Q})}Q) \ge 0$$ 
for all $Q \in \clb_R$. Note that this notion extended to $\tilde{\clo}_d \otimes \clo_d$ is an abstract version of the concept ``reflection positivity with a twist $g_0$ '' of a state on $\clb$ introduced in [FILS] for any involution (linear or conjugate linear ) taking element from future algebra to past algebra. Such an involution are included within the abstract framework of {\it reflection positive with a twist } introduced in [FILS]. 

\vsp 
In general the hidden symmetry $v_{g_0}$ described in (42) need not be trivial and 
will play an important role in determining properties of $\omega$. Our next proposition is the simple situation that we can expect for $v_{g_0}$.    

\vsp
\begin{thm} 
Let $\omega$ be a translation-invariant, reflection symmetric with a twist $g_0 \in U_d(\IC)$ and pure state on $\clb$. Then there exists an extremal element $\psi$ in $K_{\omega}$ so that
associated Popescu's elements $(\clk,v_k,1 \le k \le d,\Omega)$ in its support projection
satisfies the relation (42) i.e.  
\be 
v_{g_0}\beta_{g_0}(v_k)v_{g_0}^*=\clj \tilde{v}_k\clj
\ee
, where $v_{g_0}$ is a unitary operator on $\clk$ commuting with the modular elements $\Delta^{1 \over 2},\clj$ and $v_{g_0}$ commutes with $P_0=[\clm_0\Omega]$. Furthermore,  the following statements are true:

\NI (a) $v_{g_0}$ is self-adjoint,  if and only if $g_0=\bar{g_0}$; 

\NI (b) If $\omega$ is also reflection symmetric then $H \subseteq \{-1,1\}$ and $v_{g_0}$ commutes with $\{u_z:z \in H \}$; 

\NI (c) $\psi$ is reflection positive with twist $g_0$ on $\pi(\tilde{\clo_d} \otimes \clo_d)$,  if and only if $v_{g_0}$ in (43) is equal to $1$ i.e. we have 
$$\clj \tilde{v}_k \clj = \beta_{g_0}(v_k) $$ 
for all $1 \le k \le d$. 

\NI (d) $\omega$ is reflection positive with twist $g_0$ on $\clb$,  if and only if 
$$\clj \tilde{v}_I\tilde{v}_J^* \clj = \beta_{g_0}(v_Iv_J^*)$$ 
for all $|I|=|J| < \infty $. In such a case $v_{g_0}P_0=P_0$  
and $\tilde{\tau}(y)=\clj \tau(\clj y \clj )\clj$ for $y \in \clm_0' \subseteq \clb(\clk_0)$,  where we recall $P_0=[\clm_0\Omega]$ and $\clk_0$ is the 
Hilbert subspace $P_0$ of $\clk=[\clm\Omega]$. 

\NI (e) $\Delta=I_{\clk}$,  if and only if 
$$v_{g_0}\beta_{g_0}(v_k)v^*_{g_0} = v_k^*,\; 1 \le k \le d$$ 
In such a case $H$ is trivial and $\clm$ is finite type-I and spatial correlation functions 
of $\omega$ decays exponentially. Further if $\omega$ is reflection positive with twist $g_0$ on $\clb$, then $v_{g_0}=1$.      
\end{thm}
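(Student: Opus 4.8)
The plan is to extract from the hidden symmetry relation (43) a \emph{self-consistency} condition on the modular operator $\Delta^{1/2}$, and show that this condition forces $\Delta = I_\clk$ precisely when the symmetry takes the ``twisted flip'' form $v_{g_0}\beta_{g_0}(v_k)v^*_{g_0}=v^*_k$. First I would apply the anti-unitary $\clj$ to both sides of (43). Since $v_{g_0}$ commutes with $\clj$ by Theorem 3.6, we get $v_{g_0}\beta_{g_0}(\clj v_k\clj)v^*_{g_0}=\tilde v_k$. Now recall $\bar v_k=\clj v_k\clj$ lives in $\clm'$ and, by the defining formula (23) for the dual Popescu elements, $\tilde v_k=\overline{\clj\sigma_{i/2}(v^*_k)\clj}$; combining these two expressions for an element of $\clm'$ and using the separating property of $\Omega$ for $\clm'$ (applied after acting on $\Omega$, where $\sigma_{i/2}(v^*_k)\Omega=\Delta^{1/2}v^*_k\Delta^{-1/2}\Omega=\Delta^{1/2}v^*_k\Omega$ modulo the $\clj$-bookkeeping of (21)) I expect to land on an identity of the shape $v_{g_0}\beta_{g_0}(v_k)v^*_{g_0}\,\Omega = \Delta^{1/2}v^*_k\Delta^{-1/2}\,\Omega$ as vectors in $\clk$. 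This is the key bridge: the left side is $v^*_k\Omega$ iff the stated condition holds, and the right side collapses to $v^*_k\Omega$ iff $\Delta$ acts trivially on the relevant cyclic subspace. Since $\Omega$ is cyclic for $\clm$ and the $v_I v^*_J\Omega$ span $\clk$ (Proposition 2.1(b)), tracking the identity through words $v_I v^*_J$ rather than single $v_k$ upgrades this to $\Delta = I_\clk$.

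Once $\Delta = I_\clk$ is established, the remaining assertions follow from results already in hand. Triviality of $\Delta$ means $\phi$ is a trace on $\clm$; combined with $\clm$ being a factor (Proposition 2.5(c), via $\clm\vee\tilde\clm=\clb(\clk)$ and $\tilde\clm\subseteq\clm'$) and the dynamical system $(\clm_0,\tau^n,\phi_0)$ having a tracial invariant state, I would invoke the purity criterion of [Mo1] together with Proposition 2.1(c): a finite factor carrying the Popescu system of a \emph{pure} translation-invariant state, with $\Delta=I$, must be type $\mathrm{I}_n$ — here $\clm$ finite type-I. For the mass gap / exponential decay, the self-adjoint operator $T:\clk_0^{1/2}\to\clk_0^{1/2}$ of (24) becomes, when $\Delta=I$, a contraction on an ordinary Hilbert space $\clk_0^{1/2}=\clm_0$ with inner product $\langle x,y\rangle$ (the $\clj y\clj$ twist being harmless since $\clj$ is now just the flip on a finite matrix algebra), and the two-point correlation function $\omega(Q_1\theta^{(n)}(Q_2))-\omega(Q_1)\omega(Q_2)$ is up to bounded factors $\langle\!\langle \xi_1, T^{\,n}\xi_2\rangle\!\rangle$ with $\xi_i\perp$ the fixed-point line; since $\clk_0^{1/2}$ is finite-dimensional, $T$ restricted to this complement has spectral radius $<1$ (here ergodicity/factoriality of $\omega$ enters to kill the peripheral spectrum other than $1$), giving a uniform exponential rate, i.e. exactly condition (2). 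Triviality of $H$ follows from $H\subseteq\{-1,1\}$ (Theorem 3.4 / the twisted analysis) together with $\Delta=I$: the relation (31)/(37) forces $u_\zeta\Delta^{1/2}u^*_\zeta=\Delta^{-1/2}$, vacuous when $\Delta=I$, and a separate check using (18)–(19) shows the SNAG decomposition collapses to $P_0$, so $H=\{1\}$.

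For the last sentence — if $\omega$ is in addition reflection positive with twist $g_0$ on $\clb$, then $v_{g_0}=1$ — I would combine part (d) of this very theorem with the displayed condition of (e). Part (d) already gives $\clj\tilde v_I\tilde v^*_J\clj=\beta_{g_0}(v_Iv^*_J)$ for $|I|=|J|$, hence $\clj\tilde v_k\clj=\beta_{g_0}(v_k)$ on the ``diagonal'' and in particular (by part (c)) $v_{g_0}=1$ already. But I should be careful: (c) gives $v_{g_0}=1$ under reflection positivity \emph{on the Cuntz algebra} $\tilde\clo_d\otimes\clo_d$, which is a priori stronger than reflection positivity of $\omega$ on $\clb$. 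The correct route is: reflection positivity of $\omega$ on $\clb$ (part (d)) forces $\clj\tilde v_k\clj=\beta_{g_0}(v_k)$; feeding this into (43) yields $v_{g_0}\beta_{g_0}(v_k)v^*_{g_0}=\beta_{g_0}(v_k)$, so $v_{g_0}$ commutes with every $\beta_{g_0}(v_k)$, hence (applying $\beta_{g_0}$) with every $v_k$, hence with $\clm$; since $v_{g_0}$ also commutes with $\clj$ it lies in the centre of $\clm$, which is trivial, and $v_{g_0}\Omega=\Omega$ gives $v_{g_0}=1$. Note this argument does not even need the $\Delta=I$ hypothesis; under $\Delta=I$ the condition of (e) becomes $v^*_k=\beta_{g_0}(v_k)$, consistent with $v_{g_0}=1$.

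I expect the main obstacle to be the \emph{first} step — carefully deriving the vector identity $v_{g_0}\beta_{g_0}(v_k)v^*_{g_0}\Omega=\Delta^{1/2}v^*_k\Delta^{-1/2}\Omega$ from (43) with all the $\clj$, $\sigma_{i/2}$, $\beta_{g_0}$ bookkeeping correct, and in particular checking that upgrading from single generators $v_k$ to words $v_Iv^*_J$ genuinely yields $\Delta=I$ on all of $\clk$ rather than only on $\clk_0$. The subtlety is that $v_{g_0}$ a priori only commutes with $P_0$, not with every $P_k$, so one must verify that the identity propagates across the SNAG grading; I anticipate this uses $\beta_z(v_{g_0})=v_{g_0}$ for $z\in H$ together with $H$ being trivial or at most $\{-1,1\}$, which may require running the $\Delta=I$ conclusion on $\clk_0$ first and then bootstrapping. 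The converse implications (from $\Delta=I$ back to the symmetry relation) are routine reversals of the same computation and should present no difficulty.
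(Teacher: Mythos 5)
Your proposal only addresses part (e); the existence statement and parts (a)--(d) are established in the paper in the discussion following Theorem 3.4 and in the first half of the proof of Theorem 3.5, and your argument for (e) silently relies on (c) and (d), so they cannot be skipped. For the core of (e) -- deriving $\Delta=I_{\clk}$ from $v_{g_0}\beta_{g_0}(v_k)v_{g_0}^*=v_k^*$ via $v_k^*=\clj\tilde{v}_k\clj$ and the formula $\tilde{v}_k=\overline{\clj\sigma_{i/2}(v_k^*)\clj}$, then concluding that $\phi$ is tracial, that $\clm$ is finite type-I by the type-I/type-III dichotomy of [Mo1], and that correlations decay exponentially because $T$ is a self-adjoint contraction on a finite-dimensional KMS Hilbert space with trivial peripheral point spectrum -- you follow essentially the same route as the paper, modulo the $\beta_{g_0}$ versus $\beta_{\bar{g}_0}$ bookkeeping that you yourself flag.

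There is, however, a genuine gap in your treatment of the last claim, that reflection positivity of $\omega$ on $\clb$ forces $v_{g_0}=1$. You assert that part (d) ``forces $\clj\tilde{v}_k\clj=\beta_{g_0}(v_k)$'' for single generators and that the argument ``does not even need the $\Delta=I$ hypothesis.'' Both statements are wrong for the same reason: part (d) is a statement about the gauge-invariant words only, i.e. it gives $\clj\tilde{v}_I\tilde{v}_J^*\clj=\beta_{g_0}(v_Iv_J^*)$ for $|I|=|J|$, and combining this with (43) only shows that $v_{g_0}$ commutes with $\clm_0$ and with $\clj\clm_0\clj$, hence is scalar on $\clk_0=P_0\clk$; together with $v_{g_0}\Omega=\Omega$ this yields $v_{g_0}P_0=P_0$, which is exactly the conclusion already recorded in (d), not $v_{g_0}=1$ on all of $\clk$. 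To pass from $\clk_0$ to $\clk$ one needs $\clk_0=\clk$, equivalently $\pi(\clo_d)''=\pi(\mbox{UHF}_d)''$, and this is precisely where the hypothesis $\Delta=I_{\clk}$ enters: the paper deduces that $\clm$ is a finite type-I factor, invokes Lemma 2.3(ii) of [Ma3] (purity plus $\omega_R$ type-I) to get $\pi(\clo_d)''=\pi(\mbox{UHF}_d)''$, and only then applies part (c). The same identification is what the paper uses to conclude that $H$ is trivial; your alternative claim that ``the SNAG decomposition collapses to $P_0$'' because $\Delta=I$ is not substantiated -- triviality of $\Delta$ by itself says nothing about the grading $(P_k)_{k\in\hat{H}}$. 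Your closing observation that $v_{g_0}$ central in a factor plus $v_{g_0}\Omega=\Omega$ gives $v_{g_0}=1$ is a correct and slightly more self-contained way to finish than citing (c), but only after the bridge $\clk_0=\clk$ has been built.
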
 

\vsp
\begin{proof} We have already proved existence of an extremal element $\psi$ in $K_{\omega}$ and Popescu's elements on its support projection satisfying the equality 
(42) and also (a)-(b) in the text that followed after the completion of Theorem 3.4. 
We need to prove statements (c), (d) and (e). 

\vsp 
The state $\omega$ being pure, we recall from Proposition 3.3 (d) that $P\pi(\clo_d)''P=\clm$ and $P\pi(\tilde{\clo}_d)''P = \tilde{\clm} \subseteq \clm'$ 
( we do not need equality here ) and  $P=E\tilde{E}$. Thus for any $x \in \clo_d$ 
, we may write 
$$\psi(\clj_{g_0}(x) x )$$ 
$$=  \langle \Omega, \pi(\clj_{g_0}(x)) \pi(x) \Omega \rangle  $$
$$=  \langle \Omega, P \pi(\clj_{g_0}(x)) P\pi(x)P \Omega \rangle  $$
$$=  \langle \Omega, \clj_{g_0} P\pi(x)P \clj_{g_0} P\pi(x)P \Omega \rangle $$
, where we have used equality $\pi(\clj_{g_0}(x)) = \clj_{g_0} \pi(x) \clj_{g_0}$ from Theorem 3.4. If $v_{g_0}=1$ i.e. $\clj_{g_0}=v_{g_0}\clj=\clj$ on $\clk$ and thus we have 
$ \langle \Omega,\clj P\pi(x)P \clj P\pi(x)P \Omega \rangle  \ge 0$ by the self-dual property of 
Tomita's positive cone $\overline{ \{ \clj a \clj a \Omega: a \in \clm \} }$ [BR1]. Thus $\psi$ is a reflection positive map on $\pi(\tilde{\clo}_d \otimes \clo_d)$.  

\vsp 
Conversely, if $\psi$ is reflection positive on $\pi(\tilde{\clo}_d \otimes  \clo_d)''$ we have 
$ \langle \Omega,a \clj_{g_0} a \clj_{g_0} \Omega \rangle  \ge 0$,  where $a \in \clm = P\pi(\clo_d)''P$. Since $v_{g_0}$ commutes with $\clj$ and $\Delta^{1 \over 2}$ we may rewrite $ \langle \Omega, a v_{g_0} \clj a \Omega \rangle  = 
 \langle a^*\Omega, v_{g_0} \Delta^{1 \over 2} a^*\Omega \rangle   \ge 0$ i.e. $v_{g_0}\Delta^{1 \over 2}$ is a non-negative operator. 
Since $\Delta^{-{1 \over 2}}$ is also a non-negative operator commuting with $v_{g_0}\Delta^{1 \over 2}$, 
we conclude that $v_{g_0}$ is a non-negative operator. $v_{g_0}$ being unitary we conclude that $v_{g_0}=1$. This completes the proof of (c).  

\vsp 
The statement (d) also follows along the same route that of (c) replacing the role of $\clm$ and $\tilde{\clm}$ by $\clm_0$ and $\tilde{\clm}_0$ respectively with state $\omega=\psi_{|}\clb$.   

\vsp 
We will deal with the non-trivial part of (e). Let $v_{g_0}\beta_{g_0}(v_k)v^*_{g_0}=v_k^*$ for all $1 \le k \le d$. For all $1 \le k \le d$, by (43) and we have 
$v_k^* = \clj \tilde{v}_k \clj $. By our construction given in (19), following [BJKW],  
we have  
$\clj v_Iv^*_J \clj = \overline{\clj \sigma_{i \over 2}(v_Iv^*_J)\clj}$. Thus 
in particular,  we have
$\clj v_Iv_J^* \omega = \clj \Delta^{1 \over 2} v_Iv_j^*\Omega$. Since 
$[\clm \Omega]=\clk$, we conclude that $\clj = \clj \Delta^{1 \over 2}$. Thus $\Delta=I_{\clk}$. 

\vsp 
In general $\omega$ being a pure state $\clm$ is either a type-I or type-III factor [Mo1, Theorem 1.1]. Thus we conclude that $\clm$ is a finite type-I factor if $\Delta=1$ ( i.e. $\phi$ is a tracial state on $\clm$  ). This completes the proof of the first part of (e).  

\vsp
The last part of (e) is rather elementary. By Proposition 2.5 (a),  we have 
$$\omega(X_l \theta_k(X_r)) = \phi(\clj x_l \clj \tau_k(x_r))$$
for any $X_l \in \clb_L $ and $X_r \in \clb_R$, where $\clj x_l \clj = PX_lP$ 
and $x_r=PX_rP$ for some $x_l,x_r \in \clm$. 
By Power's criteria (1) we get in particular,  
$$<<x,\tau^n(y)>> \raro <<x,I_{\clk}>><<I_{\clk},y>>$$ as $n \raro \infty$. 
Since $\clm_0=P \pi(\clb_R)''P$ and $\clm_0$ is dense in $\clk^{1 \over 2}_0$, 
conclude that for all $f,g \in \clk^{1 \over 2}_0$,  we have 
$$<<f,T^ng>> \raro <<f,I_{\clk}>><<I_{\clk},g>>$$ 
as $n \raro \infty$ by a standard approximation method. In particular,  the point 
spectrum of the self-adjoint contractive operator $T$, defined by 
$Tx=\tau(x)$ on the KMS Hilbert space $\clk^{1 \over 2}_0$, in the unit 
circle is trivial i.e. $\{ z \in S^1: Tf=zf,\;\mbox{for some non zero }\; f \in \clk \}$ 
is the trivial set $\{ 1 \}$. 

\vsp 
Furthermore,  $T$ being a contractive matrix on a finite dimensional 
Hilbert space, the spectral radius of $T-|\Omega \rangle\langle \Omega|$ 
is $\alpha$ for some $\alpha < 1$. Now we use Proposition 3.1 once again 
for any $X_l \in \clb_L $ and $X_r \in \clb_R$ to verify the following
$$e^{\delta k }|\omega(X_l \theta_k(X_r)) -\omega(X_l)\omega(X_r)| $$
$$= e^{\delta k}|\phi(\clj x_l \clj \tau_k(x_r))- \phi(x_l)\phi(x_r)| \raro 0$$
as $k \raro \infty $ for any $\delta >0$ so that $e^{\delta}\alpha < 1$,  where 
$\clj x_l \clj = PX_lP$ and $x_r=PX_rP$ for some $x_l,x_r \in \clm$. As $\alpha < 1$ 
such a $\delta >0$ exists. 

\vsp 
The von-Neumann algebra $\clm$ being a finite factor, $\omega_R$ is a type-I factor state of $\clb_R$ and so $\pi(\clo_d)''=\pi(\mbox{UHF}_d)''$ by Lemma 2.3 (ii) in [Ma3] since $\omega$ is pure. Similarly we also have $\pi(\tilde{\clo}_d)''=\pi(\tilde{\mbox{UHF}}_d)''$. This completes the proof for (e) as the last statement follows from (c) since we have shown 
$\pi(\clo_d)''=\pi(\mbox{UHF}_d)''$ and $\pi_{\psi}(\clo_d)''=\pi_{\psi}(\mbox{UHF}_d)''$.
\end{proof}

\section{ Translation invariant twisted reflection positive pure state and its split property: }

\vsp
Let $\omega$ be a translation-invariant real lattice reflection-symmetric with a twist $g_0$ pure state on $\clb$. We fix an extremal element $\psi \in K_{\omega}$ so that $\bar{\psi}=\tilde{\psi}^{g_0}= \psi \beta_{\zeta}$ and consider the 
Popescu's elements $(\clk,\clm,v_i,\Omega)$ as in Theorem 3.5. $P$ being the support projection of a factor state $\psi$ we 
have $\clm = P\pi(\clo_d)''P = \{ v_k,v^*_k: 1 \le k \le d \}''$ ( Proposition 2.4 in [Mo2] ). So the dual Popescu's elements 
$(\clk,\clm',\tilde{v}_k,1 \le k \le d, \Omega)$ satisfy the relation 
$$v_{g_0}\beta_{g_0}(v_k)v^*_{g_0}= \clj \tilde{v}_k \clj$$ 
for all $1 \le k \le d$.

\vsp
We quickly recall as $\clm_0$ is the $\{\beta_z:\;z \in H \}$ invariant elements of $\clm ( =P\pi(\clo_d)''P )$, 
the normal conditional expectation $a \raro \int_{z \in H}\beta_z(a)dz$ from $\clm$ onto $\clm_0$ preserves the 
faithful normal state $\phi$. So by Takesaki's theorem [Ta] modular group associated with $\phi$ preserves 
$\clm_0$. Further since $\beta_z(\tau(a))=\tau(\beta_z(a))$ for all $x \in \clm$, the restriction of the 
completely positive map $\tau(a)=\sum_k v_kav_k^*$ to $\clm_0$ is a well defined map on $\clm_0$. Hence the 
completely positive map $\tau(a)=\sum_k v_kav_k^*$ on $\clm_0$ is also KMS symmetric modulo a unitary conjugation by $v_{g_0}$ 
i.e. $$ \langle  \langle a,\tau(b) \rangle  \rangle = \langle  \langle \tau_{v_{g_0}}(a),b \rangle  \rangle$$
, where 
$$ \langle  \langle a,b \rangle  \rangle 
=\langle \Omega,\clj a \clj b \Omega \rangle =\langle a \Omega, \Delta^{1 \over 2}b\Omega \rangle$$ 
for all $a,b \in \clm_0$ and $(\sigma_t(a)=\Delta^{it}x\Delta^{-it})$ 
is the modular automorphism group on $\clm_0$ associated with $\phi_0$ and $[\clm_0\Omega]=\clk_0$, where $\clk_0$ is the Hilbert subspace of $\clk$ equal to the range of the projection $P_0$ and 
$$\tau_{v_{g_0}}(a)=v^*_{g_0}\tau(v_{g_0}av^*_{g_0})v_{g_0}$$
for all $a \in \clm_0$. Thus $\tau_{v_{g_0}}=\tau$ on $\clm_0$,  if and only if $\omega$ is reflection positive on $\clb$ with twist $g_0$ (Theorem 3.5). However,  the inclusion $\clm_0 \subseteq \clm$ need not be an equality in general unless $H$ is trivial. 

\vsp
We now fix a translation-invariant real lattice reflection-symmetric pure state $\omega$ which is also reflection positive with a 
twist $g_0$ on $\clb$ and explore KMS-symmetric property of $(\clm_0,\tau,\phi)$ and the extended Tomita's conjugation 
operator $\clj_{g_0}$ on $\clh \otimes_{\clk} \tilde{\clh}$ defined in Theorem 3.5 to study the 
relation between split property and exponential decaying property of spatial correlation functions of $\omega$. 

\vsp
For any fixed $n \ge 1$ let $Q \in \pi(\clb_{[-n+1,n]})$. We write
\be 
Q = \sum_{|I|=|J|=|I'|=|J'|=n} q(I',I|J',J)\beta_{g_0}(\tilde{S}_{I'}\tilde{S}^*_{J'})S_IS^*_J 
\ee
Since the elements $\beta_{g_0}(\tilde{S}_{I'}\tilde{S}^*_{J'})S^*_IS_J: |I|=|J|=|I'|=|J'|=n$ form a linear independent basis for $\pi(\clb_{[-n+1,n]})$, for an element $Q \in \clb_{[-n+1,n]}$ we find a unique representation of $Q$ given in (44) with 
$q$, where $q$ is the matrix $q=((q(I',I|J',J) ))$ of order $d^{2n} \times d^{2n}$.
The map $Q \raro q$ is an automorphism between two finite dimensional algebras. Thus 
the operator norm of $Q$ is equal to the matrix norm of $q$. Further $Q$ is positive 
, if and only if $q$ is so. 

\vsp 
We consider the linear map $L:M_{d^{2n} \times d^{2n}}(\IC) \raro M_{d^{2n} \times d^{2n}}(\IC)$ defined by
$$L(q)=\hat{q}$$
, where $\hat{q}=((\hat{q}(I',I|J',J) ))$ is a $d^{2n} \times d^{2n}$ matrix with 
$$\hat{q}(I',I|J',J) =  q(I',J'|I,J) $$ 
Note by our definition we can as well write   
\be 
Q= \sum_{|I|=|J|=|I'|=|J'|=n} \hat{q}(I',J'|I,J)\beta_{g_0}(\tilde{S}_{I'}\tilde{S}^*_{J'})S_IS^*_J 
\ee
We also set 
\be 
\hat{Q} = \sum_{|I|=|J|=|I'|=|J'|=n} \hat{q}(I',I|J',J)\beta_{g_0}(\tilde{S}_{I'}\tilde{S}^*_{J'})S_IS^*_J 
\ee

\vsp 
Though the map $L$ is not unit preserving, we have the following property.

\vsp
\begin{pro} 
The map $Q \raro \hat{Q}$ is positive i.e. $\hat{q} \ge 0$ whenever $q \ge 0$; 
\end{pro}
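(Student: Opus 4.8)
The plan is to read Proposition~4.1 as the assertion that the linear map $L:q\raro\hat q$ is the matrix form --- under the isomorphism $Q\raro q$ of $\pi(\clb_{[-n+1,n]})$ with $M_{d^{2n}}(\IC)$ --- of a composition of positivity-preserving operations on $\clb_{[-n+1,n]}$, and then to run that composition explicitly. The operations involved are the reflection automorphism $r$ of $\clb_{[-n+1,n]}$ around $1/2$, the inner automorphism $\beta_{g_0}$, the complex conjugation $Q\raro\overline{Q}$ with respect to the basis $(e_i)$, and conjugation by the anti-unitary $\clj_{g_0}$ on $\clh\otimes_{\clk}\tilde\clh$ introduced in Section~3. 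The first two are $*$-automorphisms, $Q\raro\overline{Q}$ is an anti-linear $*$-automorphism, and $X\raro\clj_{g_0}X\clj_{g_0}$ sends positive operators to positive operators because $\clj_{g_0}$ is anti-unitary; hence each of them carries the positive cone of $\clb_{[-n+1,n]}$ into itself, and so does any composition, which yields the claim once $L$ is identified with such a composition. A purely combinatorial attempt --- writing $q=\sum_k|v_k\rangle\langle v_k|$ and re-shaping the summands --- does not suffice by itself, since $L$ moves a ``bra'' index of one tensor factor into a ``ket'' slot of the other, so one genuinely has to use the reflection structure of $\omega$.

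In detail, I would start from the expansion~(44) of $Q$ and conjugate it by $\clj_{g_0}$, the anti-unitary on $\clh\otimes_{\clk}\tilde\clh$ furnished, under the standing hypotheses and via Theorem~3.5, by the $g_0$-twisted form of Theorem~3.4~(d); it satisfies $\clj_{g_0}\Omega=\Omega$ and interchanges the two commuting Cuntz copies. Since $\clj_{g_0}$ is anti-linear and multiplicative, conjugation by it turns $\beta_{g_0}(\tilde S_{I'}\tilde S^*_{J'})$ into a $\beta_{g_0}$-twist of $S_{I'}S^*_{J'}$ and $S_IS^*_J$ into a $\beta_{g_0}$-twist of $\beta_{g_0}(\tilde S_I\tilde S^*_J)$, using in addition the relations of Theorem~3.5 between $v_k$, $\tilde v_k$, $\clj$ and the hidden symmetry $v_{g_0}$. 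Using the Cuntz relations~(10) and the commutation of the families $(S_k)$ and $(\tilde S_k)$, I would re-expand $\clj_{g_0}Q\clj_{g_0}$ once more in the linearly independent family $\{\beta_{g_0}(\tilde S_{I'}\tilde S^*_{J'})S_IS^*_J:|I|=|J|=|I'|=|J'|=n\}$, absorb the residual relabellings of the four multi-indices $I',I,J',J$ into $r$ and $\beta_{g_0}$, and read off that the resulting operator is precisely $\hat Q$, i.e. has coefficient array $\hat q(I',I|J',J)=q(I',J'|I,J)$. As $Q\ge0$ and every map in the composition preserves positivity, $\hat Q\ge0$, that is $\hat q\ge0$.

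The main obstacle is exactly this last bookkeeping step, and it is where reflection positivity with the twist $g_0$ is essential. Conjugation by $\clj_{g_0}$ carries the ``left annihilation'' label coming from $\tilde S^*_{J'}$ and the ``right creation'' label coming from $S_I$ across the boundary between the two tensor factors, and one must check that the net effect on $q$ is exactly the shuffle $(I',I|J',J)\raro(I',J'|I,J)$ defining $L$ --- and not, say, a partial transpose of a single tensor factor, which would not preserve positivity. What makes the argument close is that the anti-linear automorphism $\clj_{g_0}$ of $\clo_d\otimes\tilde\clo_d$ is, thanks to reflection positivity with twist $g_0$, implemented by a genuine anti-unitary on $\clh\otimes_{\clk}\tilde\clh$; conjugation by that anti-unitary is precisely what turns the otherwise delicate realignment $L$ of $q$ into a positivity-preserving map.
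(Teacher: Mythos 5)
Your plan hinges on identifying $L$ with the coefficient-level action of conjugation by the anti-unitary $\clj_{g_0}$ (combined with $r$, $\beta_{g_0}$ and the basis conjugation), so that positivity of $\hat Q$ would follow from positivity of $X \raro \clj_{g_0}X\clj_{g_0}$. That identification fails. On the basis elements one has $\clj_{g_0}\,\beta_{g_0}(\tilde{S}_{I'}\tilde{S}^*_{J'})S_IS^*_J\,\clj_{g_0}=S_{I'}S^*_{J'}\,\beta_{g_0}(\tilde{S}_I\tilde{S}^*_J)$ --- this is exactly the relation the paper invokes to prove Proposition 4.2(b) --- so conjugation by $\clj_{g_0}$ sends the coefficient array $q(I',I|J',J)$ to $\overline{q(I,I'|J,J')}$: the two tensor factors are swapped and the entries conjugated, but each bra--ket pair $(I',J')$ and $(I,J)$ stays intact. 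The map $L$ does something categorically different: on matrix units it acts as $|e_A\rangle\langle e_B|\ot|e_C\rangle\langle e_D|\raro|e_A\rangle\langle e_C|\ot|e_B\rangle\langle e_D|$, i.e.\ it re-pairs row and column indices across the two factors. No composition of $*$-automorphisms, anti-automorphisms and (anti-)unitary conjugations can produce such a map, because all of those are $*$-preserving while $L$ is not: for $X=|e_0\rangle\langle e_1|\ot|e_0\rangle\langle e_1|$ one finds $L(X)=|e_0\rangle\langle e_0|\ot|e_1\rangle\langle e_1|$ whereas $L(X^*)=|e_1\rangle\langle e_1|\ot|e_0\rangle\langle e_0|\neq L(X)^*$. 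So the step you yourself single out as delicate ``bookkeeping'' --- checking that the net effect is the shuffle $(I',I|J',J)\raro(I',J'|I,J)$ rather than some partial transpose --- is precisely where the argument breaks, and reflection positivity does not rescue it.

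For comparison, the paper's own proof is entirely different in spirit: it is a bare finite-dimensional index manipulation, with no reference to $\clj_{g_0}$, to the amalgamated representation, or to reflection positivity. It asserts the chain $((\hat q(I',J'|I,J)))^t=((\hat q(I,J|I',J')))=((q(I,I'|J,J')))=U((q(I',I|J',J)))U^*$, with $U$ the flip $e_{I'}\ot e_I\raro e_I\ot e_{I'}$, and concludes $\hat q=Uq^t_eU^*\ge 0$. You should test this before relying on it: the only reading of $\hat q$ consistent with its use in Proposition 4.2(c) (where one needs $q(I',I|J',J)=\sum_{K,K'}\overline{\hat b(K,K'|I',J')}\,\hat b(K,K'|I,J)$) makes $L$ the realignment $\hat q_{(A,B),(C,D)}=q_{(A,C),(B,D)}$, and for $q=2|\psi\rangle\langle\psi|$ with $\psi=(e_0\ot e_1+e_1\ot e_0)/\sqrt 2$ one has $q\ge 0$ and $Uq^t_eU^*=q$, yet the realignment equals $\sigma_x\ot\sigma_x$, which is not positive. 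So the asserted identity $\hat q= Uq^t_eU^*$ cannot hold for all $q$, and in its present form neither your route nor the paper's closes the proof; some additional restriction on the admissible coefficient matrices, or a corrected definition of $L$, appears to be needed before positivity of $\hat q$ can be claimed.
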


\vsp
\begin{proof} 
We verify the following simple steps by taking transpose $q \raro q_e^t$ as with respect to the orthonormal basis $(e_i)$ described as in (6):
$$((\hat{q}(I',J'|I,J)))^t$$
$$=((\hat{q}(I,J|I',J')))$$
$$=((q(I,I'|J,J'))$$
$$=U((q(I',I|J',J))))U^*$$
, where $U$ is the unitary matrix that takes orthonormal basis vector $e_{I'} \otimes e_{I}$ to
$e_{I} \otimes e_{I'}$,  where $e_{I'}= e_{i'_1} \otimes e_{i'_2}...\otimes e_{i'_n} $ and $e_I=e_{i_1} \otimes e_{i_2} \otimes ..\otimes e_{i_n}$ 
Since the transpose map $q \raro q^t$ is positive ( $(q^*q\hat{)}=\hat{q}\hat{q^*}=\hat{q}(\hat{q})^*)$, we conclude that $\hat{q}$ is positive whenever $q$ is so. 
\end{proof}

\vsp
\begin{pro}
Let $\omega$ be a translation-invariant real lattice reflection-symmetric with twist $g_0$ pure state on $\otimes_{\IZ}M_d$ . Then there exists an extremal point $\psi \in K_{\omega}$ so that
$\psi \beta_{\zeta}=\tilde{\psi}^{g_0}=\bar{\psi}$,  where $\zeta \in \{1, \mbox{exp}^{i \pi \over 2} \}$ and the associated Popescu's elements $(\clh,S_k,\;1 \le k \le d,\Omega)$ and $(\clh,\tilde{S}_k,\;1 \le k \le d,\Omega)$ described in Proposition 2.5 satisfy the following:

\NI (a) For any $n \ge 1$ and $Q \in \pi(\clb_{[-n+1,n]})$ we write
$$Q=\sum_{|I'|=|J'|=|I|=|J|=n} \hat{q}(I',J'|I,J)\beta_{g_0}(\tilde{S}^*_{I'}\tilde{S}^*_{J'})S^*_IS_J$$ and
and for $k \ge 1$  
$$\hat{\theta}_{2k}(Q)= \sum_{|I|=|J|=|I'|=|J'|=n} \hat{q}(I',J'|I,J)
\beta_{g_0}(\tilde{\Lambda}^k(\tilde{S}_{I'}\tilde{S}^*_{J'}))\Lambda^{k}(S_IS^*_J),$$
where $\tilde{\Lambda}(X)=\sum_{1 \le i \le d} \pi(\tilde{s}_i)X\pi(\tilde{s}^*_i)$. Then 
$$\hat{\theta}_{2k}(Q) \in \clb_{(-\infty,-k] \bigcup [k+1,\infty)}$$ 

\NI (b) $Q=\clj_{g_0} Q \clj_{g_0}$,  if and only if $\hat{q}(I',J'|I,J) = \overline{\hat{q}(I,J|I',J')};$

\NI (c) If the matrix $\hat{q}=(( \hat{q}(I',J'|I,J) ))$ is non-negative then there exists a matrix $\hat{b}=(( \hat{b}(I',J'|I,J) ))$ so that $\hat{q}=(\hat{b})^*\hat{b}$ and then
$$PQP=\sum_{|K|=|K'|=n} \clj_v x_{K,K'} \clj_v x_{K,K'}$$
, where $x_{K,K'} = \sum_{I,J:\;|I|=|J|=n} \hat{b}(K,K'|I,J)v_Iv^*_J \in \clm_0$
\vsp
\NI (d) In such a case i.e. if $Q=\clj_{g_0} Q \clj_{g_0}$ the following holds:

\NI (i) $\omega(Q)=\sum_{K,K':|K|=|K'|=n} \phi(\clj_v x_{K,K'} \clj_v x_{K,K'})$

\NI (ii) $\omega(\hat{\theta}_{2k}(Q))=\sum_{K,K':|K|=|K'|=n}\phi(\clj_v x_{K,K'} \clj_v \tau_{2k}(x_{K,K'})).$

\end{pro}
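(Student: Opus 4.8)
The plan is to establish (a)--(d) in turn, the main tools being the amalgamated representation $\pi$ of $\tilde{\clo}_d\otimes\clo_d$ (Proposition 2.5), the relations of Theorem 3.5, and the standing hypothesis of this section that $\omega$ is reflection positive with the twist $g_0$ (so that $\clj_{g_0}$ is implemented by an anti-unitary on $\clh\otimes_{\clk}\tilde{\clh}$ whose restriction $\clj_v$ to $\clk$ satisfies $\clj_v\Omega=\Omega$, $\clj_v\pi(x)\clj_v=\pi(\clj_{g_0}(x))$, and, since $v_{g_0}P_0=P_0$, coincides with $\clj$ on $\clk_0$). For (a) I would use that the canonical endomorphism $\Lambda(X)=\sum_{1\le i\le d}S_iXS_i^*$, restricted to $\pi(\mbox{UHF}_d)''\cong\pi(\clb_R)''$, is the one-sided shift $y_1\otimes y_2\otimes\cdots\mapsto 1\otimes y_1\otimes y_2\otimes\cdots$; hence for $|I|=|J|=n$ the monomial $S_IS_J^*$ is localised in $\clb_{[1,n]}$ and $\Lambda^k(S_IS_J^*)$ in $\clb_{[k+1,k+n]}\subseteq\clb_{[k+1,\infty)}$. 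Dually, $\tilde{\Lambda}(X)=\sum_i\tilde{S}_iX\tilde{S}_i^*$ on $\pi(\tilde{\mbox{UHF}}_d)''\cong\pi(\clb_L)''$ implements $\theta^{-1}$, so $\tilde{\Lambda}^k(\tilde{S}_{I'}\tilde{S}_{J'}^*)$ is localised in $\clb_{[-n-k+1,-k]}\subseteq\clb_{(-\infty,-k]}$; and $\beta_{g_0}=Ad(g_0)\otimes Ad(g_0)\otimes\cdots$ acts site-wise, so it preserves these regions. As $\clb_{(-\infty,-k]}$ and $\clb_{[k+1,\infty)}$ are supported on disjoint index sets, each summand of $\hat{\theta}_{2k}(Q)$, and hence $\hat{\theta}_{2k}(Q)$ itself, lies in $\clb_{(-\infty,-k]\cup[k+1,\infty)}$.

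For (b) I would compute $\clj_{g_0}Q\clj_{g_0}=\pi(\clj_{g_0}(Q))$ directly from the defining rule $\clj_{g_0}(s_Is_J^*\otimes\tilde{s}_{I'}\tilde{s}_{J'}^*)=\beta_{g_0}(s_{I'}s_{J'}^*)\otimes\beta_{g_0}(\tilde{s}_I\tilde{s}_J^*)$ together with anti-linearity. The $\beta_{g_0}$ already carried by each basis monomial $\beta_{g_0}(\tilde{S}_{I'}\tilde{S}_{J'}^*)S_IS_J^*$ is precisely the one absorbed, using $g_0^2=I_d$, by the twists occurring in $\clj_{g_0}$, so that the net effect on the coefficient array is the interchange of the two index pairs combined with complex conjugation: $\clj_{g_0}Q\clj_{g_0}$ has array $(I',J'|I,J)\mapsto\overline{\hat{q}(I,J|I',J')}$. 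Since the monomials $\beta_{g_0}(\tilde{S}_{I'}\tilde{S}_{J'}^*)S_IS_J^*$ form a basis of $\pi(\clb_{[-n+1,n]})$, $Q=\clj_{g_0}Q\clj_{g_0}$ is equivalent to $\hat{q}(I',J'|I,J)=\overline{\hat{q}(I,J|I',J')}$.

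For (c), the relation in (b) is exactly self-adjointness of the $d^{2n}\times d^{2n}$ matrix $\hat{q}=((\hat{q}(I',J'|I,J)))$ (rows $(I',J')$, columns $(I,J)$); being non-negative it factors as $\hat{q}=\hat{b}^*\hat{b}$, i.e. $\hat{q}(I',J'|I,J)=\sum_{K,K'}\overline{\hat{b}(K,K'|I',J')}\,\hat{b}(K,K'|I,J)$. I would then compress by $P$: for balanced word lengths the amalgamated representation gives $PS_IS_J^*P=v_Iv_J^*$, $P\pi(\beta_{g_0}(\tilde{s}_{I'}\tilde{s}_{J'}^*))P=\beta_{g_0}(\tilde{v}_{I'}\tilde{v}_{J'}^*)$, and $P(\,\cdot\,)P$ is multiplicative across the two commuting Cuntz copies, so $P\beta_{g_0}(\tilde{S}_{I'}\tilde{S}_{J'}^*)S_IS_J^*P=\beta_{g_0}(\tilde{v}_{I'}\tilde{v}_{J'}^*)\,v_Iv_J^*$ with the first factor in $\clm'$ and the second in $\clm_0$. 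Since moreover $\clj_{g_0}$ commutes with $P$ (purity of $\omega$, $P=E\tilde{E}$), for $|I|=|J|=n$ one has $\clj_v\,v_Iv_J^*\,\clj_v=P\clj_{g_0}S_IS_J^*\clj_{g_0}P=P\pi(\beta_{g_0}(\tilde{s}_I\tilde{s}_J^*))P=\beta_{g_0}(\tilde{v}_I\tilde{v}_J^*)$. Plugging the factorisation into $PQP=\sum\hat{q}(I',J'|I,J)\,\beta_{g_0}(\tilde{v}_{I'}\tilde{v}_{J'}^*)\,v_Iv_J^*$ and regrouping over $(K,K')$, the $\clo_d$-part is $x_{K,K'}=\sum_{I,J}\hat{b}(K,K'|I,J)v_Iv_J^*\in\clm_0$, and by anti-linearity of $\clj_v$ together with the intertwining just noted the $\tilde{\clo}_d$-part is $\clj_v x_{K,K'}\clj_v$; this yields $PQP=\sum_{K,K'}\clj_v x_{K,K'}\clj_v\,x_{K,K'}$.

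Part (d)(i) is then immediate from (c) and $P\Omega=\Omega$, since $\omega(Q)=\langle\Omega,Q\Omega\rangle=\langle\Omega,PQP\Omega\rangle$. For (d)(ii), regroup $Q=\sum_{K,K'}L_{K,K'}R_{K,K'}$ as in (c), with $R_{K,K'}\in\pi(\clb_{[1,n]})$ built from the $S_IS_J^*$ and $L_{K,K'}\in\pi(\clb_{[-n+1,0]})$ from the $\beta_{g_0}(\tilde{S}_{I'}\tilde{S}_{J'}^*)$, so that $PR_{K,K'}P=x_{K,K'}$ and $PL_{K,K'}P=\clj_v x_{K,K'}\clj_v$. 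Since $\hat{\theta}_{2k}$ shifts each $L_{K,K'}$ by $k$ sites to the left, into $\clb_{(-\infty,-k]}$, and each $R_{K,K'}$ by $k$ sites to the right, into $\clb_{[k+1,\infty)}$, translation invariance of $\omega$ gives $\omega(\hat{\theta}_{2k}(Q))=\sum_{K,K'}\omega\big(L_{K,K'}\,\theta^{2k}(R_{K,K'})\big)$. Applying the identity $\omega\big(X_l\,\theta^{m}(X_r)\big)=\phi\big((PX_lP)\,\tau^m(PX_rP)\big)$ for $X_l\in\clb_L$, $X_r\in\clb_R$ (Proposition 2.5(a) and the computation in the proof of Theorem 3.5(e)) with $m=2k$ then produces $\omega(\hat{\theta}_{2k}(Q))=\sum_{K,K'}\phi\big(\clj_v x_{K,K'}\clj_v\,\tau_{2k}(x_{K,K'})\big)$; equivalently one may compress first, reaching $\sum_{K,K'}\phi\big(\clj_v\tau^k(x_{K,K'})\clj_v\,\tau^k(x_{K,K'})\big)$, and then use self-adjointness of $T$ on $\clk_0^{1/2}$. \textbf{The main obstacle} I anticipate lies in (c)--(d): verifying that $P(\,\cdot\,)P$ is genuinely multiplicative across the two commuting Cuntz copies for word-length-balanced monomials, and that reflection positivity with the twist is precisely what makes $v_{g_0}$ act as the identity on $\clk_0$, so that the conjugation appearing in $PQP$ is $\clj_v$ (equivalently $\clj$ there); the twist bookkeeping behind the clean interchange in (b) is fiddly but mechanical.
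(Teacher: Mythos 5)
Your proposal is correct and follows essentially the same route as the paper: localisation of the shifted monomials via the identification of $\Lambda,\tilde{\Lambda}$ with the one-sided shifts for (a), the conjugation relation $\clj_{g_0}\beta_{g_0}(\tilde{S}_{I'}\tilde{S}^*_{J'})S_IS^*_J\clj_{g_0}=S_{I'}S^*_{J'}\beta_{g_0}(\tilde{S}_I\tilde{S}^*_J)$ for (b), the factorisation $\hat{q}=\hat{b}^*\hat{b}$ together with $P=E\tilde{E}$ and the resulting multiplicativity $PXYP=PXPYP$ across the two commuting Cuntz copies for (c), and the transfer-operator identity $\omega(X_l\theta^m(X_r))=\phi\bigl((PX_lP)\,\tau^m(PX_rP)\bigr)$ for (d). You simply supply the details that the paper's terse proof leaves implicit.
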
 

\vsp
\begin{proof} 
Since the elements $\beta_{g_0}(\tilde{S}_{I'}\tilde{S}^*_{J'})S^*_IS_J: |I|=|J|=|I'|=|J'|=n$ form a linear independent basis for $\pi(\clb_{[-n+1,n]})$ and thus for an element $Q \in \clb_{[-n+1,n]}$, the representation of $Q$ given in (44) with $q=((q(I',I|J',J) ))$
is unique and so also $\hat{q}$ defined in (45). 

\vsp 
The endomorphism $\Lambda$ is the right translation on $\pi(\clb_R)''$ fixing all elements in $\pi(\clb_L)''$ and the endomorphism $\tilde{\Lambda}$ is left translation on $\pi(\clb_L)''$ fixing all elements in $\pi(\clb_R)''$. Thus (a) follows. 

\vsp 
The statement (b) is also a simple consequence of unique representation of $Q$ given in (44) and the relation $\clj_{g_0} \beta_{g_0}(\tilde{S}_{I'}\tilde{S}^*_{J'})S_IS^*_J \clj_{g_0} = S_{I'}S^*_{J'} \beta_{g_0}(\tilde{S}_I\tilde{S}^*_J) $. 

\vsp 
For (c) we write 
$$Q=\sum_{|K|=|K'|=n} \clj_{g_0}(Q_{K,K'}) \clj_{g_0} Q_{K,K'}$$ 
, where $Q_{K.K'}= \sum_{I,J:\;|I|=|J|=n} \hat{b}(K,K'|I,J)S_IS^*_J$. The state 
$\omega$ being pure,  we have by Theorem 3.6 in [Mo2] that 
$P=E \tilde{E}$,  where $E$ and $\tilde{E}$ are support projection of $\psi$ in $\pi(\clo_d)''$
and $\pi(\tilde{\clo}_d)''$ respectively. So for any $X \in \pi(\clo_d)''$ and $Y \in \pi(\tilde{\clo}_d)''$
we have $PXYP=\tilde{E}EXY\tilde{E}E= \tilde{E}EYE\tilde{E}X\tilde{E}E=PXPYP$. Thus (c) follows as $\omega(Q)
= \phi(PQP)$ by Theorem 3.5 as $\omega$ is lattice symmetry with twist $g_0$. 
For (d) we use (a) and (c). This completes the proof. 
\end{proof} 

\vsp
\begin{pro} 
Let $\omega$, a translation-invariant pure state on $\clb$, be in detailed balance and 
reflection positive with a twist $g_0$. Then the following are equivalent:

\NI (a) two-point spatial correlation functions of $\omega$ decay exponentially;

\NI (b) The spectrum of $T-|I_{\clk} \rangle\rangle\langle\langle I_{\clk}|$ is a subset of $[-\alpha,\alpha]$ for some $0 \le \alpha < 1$
, where $T$ is the self-adjoint contractive operator defined by
$$Ta\Omega= \tau(a)\Omega,\;\;a \in \clm_0$$
on the KMS-Hilbert space $\clk^{1 \over 2}_0$ with inner product $ \langle  \langle a,b \rangle  \rangle = 
\langle \Omega, \clj a \clj b \Omega\rangle.$
\end{pro}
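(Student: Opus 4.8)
The plan is to translate the analytic clustering conditions on both sides into spectral statements about the self-adjoint contraction $T$ on $\clk_0^{1/2}$, using the representation of two-point correlation functions obtained in Proposition~4.3 and the reflection positivity hypothesis of Theorem~3.5~(d). First I would record the key computational identity: by Proposition~2.5~(a) together with Theorem~3.5~(d), for any $X_l \in \clb_L$ and $X_r \in \clb_R$ one has
\be
\omega(X_l \theta^{2k}(X_r)) = \phi(\clj x_l \clj\, \tau_{2k}(x_r)) = \langle\langle x_l^*, T^{2k} x_r\rangle\rangle'
\ee
for suitable $x_l, x_r \in \clm_0$ (after passing to the KMS completion $\clk_0^{1/2}$), where the detailed balance / reflection positivity assumption guarantees $\tau$ is KMS-symmetric on $\clm_0$ so that $T$ is genuinely self-adjoint on $\clk_0^{1/2}$. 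Since $\omega$ is a factor state, Power's criterion~(4) forces $\langle\langle f, T^n g\rangle\rangle \to \langle\langle f, I_\clk\rangle\rangle\langle\langle I_\clk, g\rangle\rangle$, i.e.\ $T^n \to |I_\clk\rangle\rangle\langle\langle I_\clk|$ strongly, so $1$ is a simple eigenvalue of $T$ with eigenvector $I_\clk$ and no other point of the unit circle lies in the point spectrum.

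For the implication (b) $\Rightarrow$ (a): if $\sigma(T - |I_\clk\rangle\rangle\langle\langle I_\clk|) \subseteq [-\alpha,\alpha]$ with $\alpha < 1$, then for $x_l, x_r$ in the orthogonal complement of $I_\clk$ in $\clk_0^{1/2}$ we get the operator-norm bound $\|T^{2k}\!\restriction_{\{I_\clk\}^\perp}\| \le \alpha^{2k}$, whence
\be
|\omega(X_l\theta^{2k}(X_r)) - \omega(X_l)\omega(X_r)| \le \alpha^{2k}\,\|x_l\|_{1/2}\,\|x_r\|_{1/2}.
\ee
Here the norms $\|x_l\|_{1/2}, \|x_r\|_{1/2}$ depend on the local elements $X_l, X_r$ but the exponent $\delta$ with $e^{\delta}\alpha < 1$ does not; so choosing $m$ large enough (depending on $X_l, X_r$) to absorb the prefactor yields precisely the exponential decay condition of Definition~1.1 along even translations, and the odd case is handled by writing $\theta^{2k+1} = \theta \circ \theta^{2k}$ and absorbing $\theta$ into $X_r$. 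One also has to pass from arbitrary local elements to the dense set generated by the $v_I v_J^*$, which is the routine approximation already used in the proof of Theorem~3.5~(e).

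For the converse (a) $\Rightarrow$ (b): suppose the spectrum of $T - |I_\clk\rangle\rangle\langle\langle I_\clk|$ meets $S^1$; since $T$ is self-adjoint this means $-1 \in \sigma(T)$ or $1 \in \sigma(T-|I_\clk\rangle\rangle\langle\langle I_\clk|)$, i.e.\ the restriction of $T$ to $\{I_\clk\}^\perp$ has spectral radius $1$. Using the spectral theorem for the self-adjoint contraction $T$ and a spectral measure supported arbitrarily near $\pm 1$, I would manufacture, for every $\epsilon>0$, a unit vector $f_\epsilon \in \{I_\clk\}^\perp$ with $\|T^{2k} f_\epsilon\| \ge (1-\epsilon)^{2k}$ for all $k$; approximating $f_\epsilon$ by an element of $\clm_0$ and hence realizing it via local observables $X_l^{(\epsilon)}, X_r^{(\epsilon)}$ through Proposition~4.3~(d), one obtains $|\omega(X_l^{(\epsilon)}\theta^{2k}(X_r^{(\epsilon)})) - \omega(X_l^{(\epsilon)})\omega(X_r^{(\epsilon)})| \ge c_\epsilon (1-\epsilon)^{2k}$, which for $\epsilon$ small violates $e^{\delta\|\ul n\|}$-decay for the fixed $\delta$ of Definition~1.1. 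This contradiction forces $\sigma(T-|I_\clk\rangle\rangle\langle\langle I_\clk|) \subseteq (-1,1)$, and then compactness of $T$ on the finite-dimensional (type-I case) or, in general, the fact that the spectrum is closed and disjoint from $\{\pm 1\}$ gives $\alpha := \|T - |I_\clk\rangle\rangle\langle\langle I_\clk|\| < 1$.

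The main obstacle I anticipate is the converse direction, specifically the step of turning an approximate spectral eigenvector $f_\epsilon$ of $T$ near the unit circle into honest local observables whose correlation decays slower than any fixed exponential rate: one must control simultaneously the $\clk_0^{1/2}$-norm of the approximant and its realizability by bounded local elements $X_l, X_r \in \clb_{\Lambda^c_m}$ while keeping the lower bound $c_\epsilon$ uniform in $k$. This is where the explicit dictionary of Proposition~4.3~(c)--(d) between elements of $\clm_0$, the cone vectors $\clj_v x \clj_v x\Omega$, and local observables $Q$ is essential, and the reflection positivity with twist $g_0$ (which makes $\clj_v = \clj$ on $\clk_0$ and hence keeps everything inside Tomita's self-dual cone) is exactly what allows the norm bookkeeping to close.
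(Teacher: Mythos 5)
Your overall strategy coincides with the paper's: both directions are reduced to a spectral statement about the self-adjoint contraction $T$ on $\clk_0^{1\over 2}$ via the identity $\omega(L\,\theta^k(R))=\phi(\clj b \clj\,\tau^k(a))=\langle\langle b,T^k a\rangle\rangle$ with $a=P\pi(R)P$, $b=\clj P\pi(L)P\clj\in\clm_0$; the implication (b)$\Rightarrow$(a) is the elementary estimate $e^{k\delta}\alpha^k=(e^{\delta}\alpha)^k\to 0$; and the converse is a contradiction argument using approximate eigenvectors at the edge of the spectrum. (The paper uses $T^k$ for all $k\ge 0$ directly, since $\theta^k(R)\in\clb_R$ stays to the right of $L$, so your even/odd bookkeeping is unnecessary, though harmless.)

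The one genuine gap is exactly the step you flag but do not close in the converse. Replacing the approximate eigenvector $f_\epsilon$ by a nearby $g\in\clm_0$ and invoking a naive perturbation bound gives only an additive error, $|\langle\langle g,T^{2k}g\rangle\rangle-\langle\langle f_\epsilon,T^{2k}f_\epsilon\rangle\rangle|\le C\eta$, which eventually swamps $(1-\epsilon)^{2k}$; no choice of $c_\epsilon$ uniform in $k$ comes out of that, and neither the local-observable dictionary nor reflection positivity repairs it. The repair — which is what the paper's proof actually does — is plain spectral positivity. Since $TI_\clk=I_\clk$, the rank-one projection $P_0=|I_{\clk}\rangle\rangle\langle\langle I_{\clk}|$ commutes with $T$ and $(T^2-P_0)^k=T^{2k}-P_0$, a positive contraction. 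Hence for any $g\in\clm_0$,
\be
\langle\langle g,(T^{2k}-P_0)g\rangle\rangle=\int_{[0,1]}t^k\,d\mu_g(t)\;\ge\;\Bigl(1-\tfrac 1n\Bigr)^k\,\bigl\|E_{[1-{1\over n},1]}g\bigr\|^2,
\ee
where $E$ is the spectral family of $T^2-P_0$ and $\mu_g$ the associated (positive) spectral measure. The coefficient $\|E_{[1-{1\over n},1]}g\|^2$ is a fixed strictly positive constant, independent of $k$, once $g$ is chosen in the dense subspace $\clm_0$ close enough to a unit vector supported in $E_{[1-{1\over n},1]}$. Feeding this into the assumed decay $e^{2k\delta}\langle\langle g,(T^{2k}-P_0)g\rangle\rangle\to 0$ yields $e^{2\delta}(1-{1\over n})<1$ for every $n$, hence $e^{2\delta}\le 1$, contradicting $\delta>0$. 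With that one observation supplied, your argument is the paper's.
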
 

\vsp
\begin{proof} By Proposition 2.5 (d) we have $P\pi(\clb_R)''P = \clm_0$ and 
$P\pi(\clb_L)''P = \tilde{\clm}_0 = \clj \clm_0 \clj$as $\tilde{\clm}=\clm'=\clj 
\clm \clj$.  

\vsp 
Since $T^k a \Omega = \tau^k(a)\Omega$ for $a \in \clm_0$ and
for any $L \in \clb_L$ and $R \in \clb_R$, by Theorem 3.5,  we have   
$\omega(L \theta^k(R)) = \phi(\clj b \clj \tau^k(a)) =  \langle  \langle  b,T^ka  \rangle  \rangle, $
where $a=P\pi(R)P$ and $b=\clj P\pi(L)P \clj$ are elements in $\clm_0$. 

\vsp 
We conclude that (a) holds,  if and only if for some $\delta > 0$,  we have 
$$e^{k \delta}| \langle \langle f,T^kg \rangle\rangle - \langle\langle f,I_{\clk} \rangle\rangle\langle\langle I_{\clk},g \rangle\rangle | \raro 0$$ 
as $k \raro \infty$ for al vectors $f,g$ in the dense subset $\clm_0$ 
of the KMS Hilbert space $\clk^{1 \over 2}_0$.

\vsp
That (b) implies (a) is now obvious since $e^{k\delta}\alpha^k=(e^{\delta}\alpha)^k \raro 0$ whenever we choose
a $\delta  >  0$ so that $e^{\delta}\alpha < 1$ since $0 \le \alpha < 1 $.

\vsp
For the converse suppose that (a) holds and $T^2-|I_{\clk} \rangle\rangle\langle\langle I_{\clk}|$ is not bounded away from $1$. Since
$T^2-|I_{\clk} \rangle\rangle\langle\langle I_{\clk}|$ is a positive self-adjoint contractive operator, for each $n \ge 1$, we find a unit 
vector $f_n$ in the Hilbert space so that $E_{[1-1/n,1]}f_n=f_n$ and $f_n \in \cld$, where $s \raro E_{[s,1]}$ 
is the spectral family of the positive self-adjoint operator $T^2-|I_{\clk} \rangle\rangle\langle\langle I_{\clk}|$ and in order to ensure 
$f_n \in \cld$ we also note that $E_{[s,1]}\cld=\{E_{[s,1]}f:\;\;f \in \cld \}$ is dense in $E_{[s,1]}$ 
for any $0 \le s \le 1$.  

\vsp
Thus by exponential decay there exists a $\delta > 0$ so that
$$e^{2k \delta} (1-{1 \over n})^k \le e^{2k \delta} \int_{[0,1]}s^k \langle\langle f_n,dE_sf_n \rangle\rangle =
e^{2k \delta } \langle\langle f_n,[T^{2k}-|I_{\clk} \rangle\rangle\langle\langle I_{\clk}|]f_n \rangle\rangle  \raro 0$$
as $k \raro \infty$ for each $n \ge 1$. Hence $e^{2\delta}(1-{1 \over n}) < 1$. Since $n$ is any
integer, we have $e^{2 \delta} \le 1$. This contradicts that $\delta > 0$.
This completes the proof.
\end{proof} 

\vsp
\begin{proof} (of Theorem 1.3) It is enough if we verify (3) for every elements $Q \in \pi(\clb_{loc})$, $Q=Q^*$ for split property. We fix any $n \ge 1$ and a self-adjoint element $Q \in \clb_{[-n+1,n]}$ i.e. 
$q$ is self-adjoint. The matrix $q$ being symmetric and we can write $q=q_+-q_-$,  where $q_+$ and $q_-$ are the unique
non-negative matrices contributing its positive and negative parts of $q$. 
Hence $||q_+|| \le ||q||$ and $||q_-|| \le ||q||$. 

\vsp 
We set as before $\hat{q}=L(q)$ and similarly $\hat{q}_+=L(q_+) \ge 0$ and $\hat{q}_-= L(q_-) \ge 0$ by Proposition 4.1 (a). We write as in Proposition 4.2
$$Q=\sum_{|I'|=|J'|=|I|=|J|=n}\hat{q}(I',J'|I,J)\beta_{g_0}(\tilde{S}^*_{I'}\tilde{S}_{J'})S^*_IS_J$$ 
and similarly $Q_+$ and $Q_-$.  

\vsp 
We recall 
$$\hat{\theta}_{2k}(Q)= \sum_{|I|=|J|=|I'|=|J'|=n} \hat{q}(I',J'|I,J)\beta_{g_0}(\tilde{\Lambda}^k(\tilde{S}_{I'}\tilde{S}^*_{J'}))
\Lambda^k(S_IS^*_J)$$ which is an element in $\clb_{(-\infty,-k] \bigcup [k+1,\infty)}$ and by Proposition
4.2 (d) $$\omega(\hat{\theta}_{2k}(Q))=\sum_{|K|=|K'|=n}\phi(\clj x_{K,K'} \clj \tau^{2k}(x_{K,K'}))$$ provided
$\hat{q}=(\hat{q}(I',J'|I,J)$ is positive (i.e. by Proposition 4.1 if $Q$ positive ), where $$PQP=\sum_{|K|=|K'|=n}\clj x_{K,K'}\clj x_{K,K'}$$ 
and 
$$x_{K,K'}=
\sum_{I,J}\hat{b}(K,K'|I,J)v_Iv^*_J$$ 
and $\hat{q}=(\hat{b})^*\hat{b}$. Thus in such a case,  we have by Proposition 4.2 (d) that
$$|\omega(\hat{\theta}_{2k}(Q))-\omega_L\otimes \omega_R(\hat{\theta}_{2k}(Q))| =
\sum_{|K|=|K'|=n}\langle \Omega, \clj x_{K,K'} \clj (\tau^{2k}-\phi)(x_{K,K'}) \Omega \rangle$$
\be 
=\sum_{|K|=|K'|=n} \langle  \langle  x_{K,K'}, ( T - |I_{\clk} 
\rangle\rangle\langle\langle I_{\clk}| )^{2k} x_{K,K'} \rangle  \rangle  
\ee
$$\le \alpha^{2k} \sum_{|K|=|K'|=n} \langle  \langle  x_{K,K'}, x_{K,K'} \rangle  \rangle  $$
provided $||T-|I_{\clk} \rangle\rangle \langle \langle I_{\clk}||| \le \alpha$ and so 
$$\le \alpha^{2k}\omega(Q) \le \alpha^{2k}||q|| $$

\vsp
Hence for an arbitrary $Q$, we have
$$ |\omega(\hat{\theta}_k(Q))-\omega_L\otimes \omega_R(\hat{\theta}_k(Q))| \le \alpha^{2k}(||
q_+||+||q_-||) \le 2 \alpha^{2k}||q|| = 2\alpha^{2k}||Q||$$ 
This completes if part of the proof by Proposition 4.3.

\vsp 
For the converse we will show now in the following text that split property (3) also implies that 
$||(T-|I_{\clk} \rangle\rangle\langle\langle I_{\clk}|)^{2k}|| \raro 0$ as $k \raro \infty$. 

\vsp 
We consider the Hilbert space $l^2(\clk^{1 \over 2}_0)$ and vectors   
$$X_b = \oplus_{|K|=|K'|=n} x^b_{K,K'} \in l^2(\clk^{1 \over 2}_0),$$ 
where $x^b_{K,K'}=\sum_{|I|=|J|=n } \hat{b}(K,K'|I,J)v_Iv_J^*$ and 
$\langle\langle X_b, X_b \rangle \rangle =\omega(Q) \le ||Q||$.
Since $\hat{q}=(\hat{b})^*\;\hat{b}=\hat{b^*}\;\hat{b}=(bb^*\hat{)}$, we get $bb^*=\hat{\hat{q}}=q$. Thus 
$||Q||=||b||^2 \le 1$,  if and only if $||b|| \le 1$. 

\vsp 
By the split property (3) of $\omega$ and equality in (47),  we have: for a given $\epsilon > 0$ there exists $k \ge 1$ so that   
$|\langle \langle x_b,\oplus (T-|I_{\clk} \rangle \rangle \langle \langle I_{\clk}|)^{2k} x_b \rangle \rangle| \le \epsilon$
for all $||b|| \le 1$. The map $b \raro \tilde{b}$ being affine, $\{x_b: ||b|| \le 1 \}$ is a convex subset in the unit 
ball of $l^2(\clk_0^{1 \over 2})$. Its closure is the unit ball of $l^2(\clk_0^{ 1 \over 2})$, otherwise by Hann-Banach theorem 
we will have a non zero vector $f \in l^2(\clk_0^{1 \over 2})$ orthogonal to all the vectors in the set $\{x_b: b \in \clb,\;||b|| \le 1 \}$ and hence orthogonal to all the vectors $\{x_b:b \in \clb\}$ by the linearity of the map $b \raro x_b$. However,  the vector subspace $\clm_0$ is dense in the Hilbert space $\clk_0^{1 \over 2}$ by our construction and the vectors $\{v_Iv_J^*:|I|=|J|=n,\; n \ge 1 \}$ are total in $\clk_0^{1 \over 2}$. Thus by taking all possible elementary matrix $b \in \clb$, we bring a contradiction to non zero property of $f$.  

\vsp 
This proves that $||(T-|I_{\clk} \rangle\rangle\langle\langle I_{\clk}|)||^{2k}= ||(T-|I_{\clk} \rangle\rangle\langle\langle I_{\clk}|)^{2k}|| \raro 0$ i.e. $||(T-|I_{\clk} \rangle\rangle\langle\langle I_{\clk}|)|| < 1$. Now we appeal to Proposition 4.3 
to complete the proof. \end{proof}

\vsp 
We end this section with an application of our main result Theorem 1.3 by proving a 
central limit theorem for a translation invariant state. For a historical account, on this
problem starting with the central limit theorem for stationary random fields [Bo], we refer to a recent article [Ma5]. 

\vsp 
Let $\cla$ be an $\theta$-invariant dense $*$ subalgebra of $\clb$ for which  
\be 
\sum_{ j \in \IZ} |\omega(Q_1 \theta^j(Q_2))-\omega(Q_1)\omega(Q_2)| < \infty 
\ee
for any $Q_1,Q_2 \in \cla$. For an element $Q \in \clb$, we take 
\be 
B_n(Q) = {1 \over \sqrt{2n+1}} \sum_{|j| \le n} ( \theta^j(Q) -\omega(Q) )
\ee 
for $n \ge 1$. Let $(\clh_{\pi},\pi,\Omega)$ be the GNS representation of $(\omega,\clb)$. 
Then we have 
$$\langle f, [B_n(Q_1),B_n(Q_2)]g \rangle \raro s(Q_1,Q_2)\langle f,g\rangle$$ 
for any vectors $f,g$ in the dense cyclic space of $\cla$ i.e. $f,g \in \pi(\cla)\Omega$,   
where  
\be 
s(Q_1,Q_2)= \sum_{j \in \IZ} \omega([Q_1,\theta^j(Q_2)]),
\ee
where $s(Q_1,Q_2)$ is well defined by (48) for $Q_1,Q_2 \in \cla$. Thus the formal limit
$B(Q)$ of $B_n(Q)$ gives rise to an algebra of canonical commutation relations with
respect to the degenerate symplectic form $s$ on the real vector space of self-adjoint element. This formal limit has no direct meaning as an unbounded operator. However,  we 
find its meaning interpreting it via the central limit theorem as follows [Ma5]. To that 
end we set 
\be 
\sigma_{\omega}(Q_1,Q_2)= \mbox{lim}_{n \raro \infty}\omega(B_n(Q_1)B_n(Q_2))
\ee
for $Q_1,Q_2 \in \cla$, where limit exist due to (48).    

\vsp 
\begin{defn} 
We say a translation invariant state $\omega$ of $\clb$ admits {central limit theorem } if 
\be 
\omega(e^{it B_n(Q)}) \raro e^{-{t^2 \over 2} \sigma_{\omega}(Q,Q)}
\ee  
as $n \raro \infty$ for all $Q=Q^* \in \cla$, where $\sigma_{\omega}(Q,Q)$ is positive constant depending on $Q$ satisfying (51).  
\end{defn} 

\vsp 
If $\omega$ is a split state satisfies our condition of Theorem 1.3  then condition (48) 
is satisfied, where we can take $\cla=\clb$. One simple corollary of Theorem 1.2 in [Ma5] says in such case that the state $\omega$ admits central limit theorem with $\sigma_{\omega}$ given by (51). However,  Theorem 1.2 in [Ma5] does not demand exponentially decaying property for two-point spatial correlation functions. His work also included an explicit example,  where the central limit theorem (52) holds for a non split pure state with (48) convergent. 

\vsp 
A natural question that arises here, is self-adjoint property of $T:\clk_0^{1 \over 2} \raro \clk_0^{1 \over 2}$ enough for central limit theorem (48) to hold? We defer results on this question for a possible future work.

\section{Ground states of Hamiltonian in quantum spin chain }

\vsp 
We consider [BR-II,Ru] quantum spin chain Hamiltonian in one dimensional lattice 
of the following form
\be 
H= \sum_{ k \in \IZ} \theta^k(h_0)
\ee
for $h^*_0=h_0 \in \clb_{loc}$ 
, where the formal sum gives an auto-morphism $\alpha=(\alpha_t:t \in \IR)$ via the thermodynamic limit of finite volume automorphisms $\{ \alpha^{\Lambda}_t(x)=e^{itH_{\Lambda}}xe^{-itH_{\Lambda}},\; \mbox{finite subset } 
\Lambda \uparrow \IZ \}$ whose surface energies are uniformly bounded,  where $H_{\Lambda}=\sum_{k \in \Lambda} \theta^k(h_0)$. 

\vsp 
The translation-invariant Hamiltonian $H$ is having finite range interaction and 
and $\beta-$KMS state of $(\alpha_t)$ at a given inverse positive temperature $\beta={1 \over kT}$ exists and is always unique [Ara1],[Ara2], [Ki]. Thus unique KMS-state inherits translation and other symmetry of the Hamiltonian $H$. Furthermore,  low temperature limit points of unique $\beta-$KMS states as $\beta \raro \infty$ are ground states for the Hamiltonian $H$ inheriting translation and other symmetry of the Hamiltonian $H$. It is a well known fact that the set of ground states of a translation-invariant Hamiltonian form a face in the convex set of $(\alpha_t)$-invariant states of $\clb$ and its extreme points are pure. In general the set of ground states need not be a singleton set. Ising model admits non translation-invariant ground states known as N\'{e}el state [BR vol-II]. However,  ground 
states that appear as low temperature limit of $\beta-$KMS states of a translation-invariant Hamiltonian inherit translation and other symmetry (that we have described above) 
of the Hamiltonian. In particular,  if ground state for a translation-invariant Hamiltonian model of type (53) is unique, then the ground state is a translation-invariant 
pure state. $H$ is called {\it reflection symmetric } if $\tilde{H}=H$ and {\it real} if $H^t=H$. Along the same line the unique ground state is reflection symmetry and real if $H$ is so. 

\vsp 
We recall now [DLS,FILS] if $H$ given in (53) has the following form 
\be 
-H= B + \clj_{g_0}(B) + \sum_i C_i \clj_{g_0}(C_i)
\ee 
for some $B, C_i \in \clb_R$, where $\clj_{g_0}$ is reflection with twist $g_0$ followed by conjugation 
with respect to a basis i.e. $\clj_{g_0}(B)= \overline{\beta_{g_0}(\tilde{B})}$, then KMS state at inverse positive temperature $\beta$ is refection positive with the twist $g_0$. We refer to [FILS] for details which we will cite frequently while dealing with examples. Since weak$^*$-limit of a sequence of reflection positive states with the twist $g_0$ is also a reflection positive state with twist $g_0$, weak$^*$-limit points of unique $\beta-$KMS state of $H$ as $\beta \raro \infty$ 
are also refection positive with twist $g_0$. A prime example of a Hamiltonian 
of type (53) with unique $\beta-$KMS state that satisfies reflection positive [FILS, section 3.4 ] property with a twist $g_0$ is the Heisenberg anti-ferromagnet 
iso-spin model $H_{XXX}$ with
\be 
h_0 = J (\sigma_x^0 \otimes \sigma_x^1 +\sigma_y^0 \otimes \sigma_y^1 + \sigma_z^0 \otimes \sigma_z^1), 
\ee 
where $\sigma_x^k,\sigma_y^k$ and $\sigma_z^k$ are Pauli spin matrices located at lattice site $k \in \IZ$ and $J > 0$ constant. 

\vsp 
Another prime example of $\beta-$KMS state that admits reflection positive property at inverse positive temperature $\beta$ includes anti-ferro-magnet $XY$ model namely $H_{XY}$ with 
\be 
h_0= J (\sigma_x^0 \otimes \sigma_x^1 + \sigma_y^0 \otimes \sigma_y^1)
\ee  
with $J > 0$ is well studied. In such a case any limiting state at low temperature, inherits same symmetry namely translation, lattice reflection, real, refection positive property. Hamiltonian $H_{XY}$, it is well known that the unique ground state 
[AMa] once restricted to $\clb_R$ gives a type-III factor state $\omega^{XY}_R$. Thus Theorem 1.3 says that its two-point spatial correlation function does not decay exponentially i.e. in the language of physical literature $\omega$ is {\it strongly correlated}. This results were known before and a different proof is given by Taku Matsui using altogether a different method [Ma3].  

\vsp 
On the other hand no clear picture has emerged so far about ground states of anti-ferromagnet Heisenberg $H_{XXX}$ model which serves a more realistic model for experimentally realized certain quasi-one dimensional magnetic materials. One standing conjecture by Haldane on $H_{XXX}$ model [AL,Ma3,Ma4] says that $H_{XXX}$ has a unique ground state and the state admits a mass gap with two-point spatial correlation function decaying exponentially for integer spin $s$ ( odd integer $d$,  where $d=2s+1$ ). Whereas for even values of $d$, the conjecture says that $H_{XXX}$ has a unique ground state but does not admits a mass gap and its two-point spatial correlation function does not decay exponentially (i.e. ${1 \over 2}$ odd integer spin $s$, where $d=2s+1$). A well known result due to Affleck and Lieb [AL] says that for even $d$ if $H_{XXX}$ admits unique ground state then two-point spatial correlation functions do not decay exponentially. We refer [AL] for finer details on this point and many other related rigorous results. 

\vsp 
If ground state of anti-ferromagnetic $H_{XXX}$ model is unique for ${1 \over 2}$ odd integer spin degrees of freedom i.e. $d=2s+1$ is an even integer, then our main result says that two-point spatial correlation functions of the ground state do not decay exponentially as $\pi(\clb_R)''$ can not be a type-I factor by Theorem 1.3 in [Ma3]. Now Theorem 2 in [NaS] 
says that the ground state if unique can not have mass gap. In [Mo3] we investigate this issue further by studying additional continuous symmetry of the state in the light of a well known general result [Wa] on ergodic actions of a compact group on von-Neumann algebras.    

\bigskip
{\centerline {\bf REFERENCES}}

\begin{itemize} 
\bigskip
\item{[Ac]} Accardi, L. : A non-commutative Markov property, (in Russian), Functional.  anal. i Prilozen 9, 1-8 (1975).

\item{[AcC]} Accardi, Luigi; Cecchini, Carlo: Conditional expectations in von Neumann algebras and a theorem of Takesaki.
J. Funct. Anal. 45 (1982), no. 2, 245–273. 

\item{[AKLT]} Affleck, L.; Kennedy, T.; Lieb, E.H.; Tasaki, H.: Valence Bond States in Isotropic Quantum Antiferromagnets, Commun. Math. Phys. 115, 477-528 (1988). 

\item{[AL]} Affleck, L.; Lieb, E.H.: A Proof of Part of Haldane's Conjecture on Spin Chains, Lett. Math. Phys, 12,
57-69 (1986). 

\item{[Ara1]} Araki, H.:  Gibbs states of a one dimensional quantum lattice. Comm. Math. Phys. 14 120-157 (1969). 

\item{[Ara2]} Araki, H.: On uniqueness of KMS-states of one-dimensional quantum lattice systems, Comm. Maths. Phys. 44, 1-7 (1975).

\item{[AMa]} Araki, H.; Matsui, T.: Ground states of the XY model, Commun. Math. Phys. 101, 213-245 (1985).

\item{[Bo]} Bolthauzen, E.: On the central limit theorem for stationary mixing random fields,
Ann. Prob. 4 (1982) 1047-1050

\item{[BR-I]} Bratteli, Ola,: Robinson, D.W. : Operator algebras
and quantum statistical mechanics, I, Springer 1981.

\item{[BR-II]} Bratteli, Ola,: Robinson, D.W. : Operator algebras
and quantum statistical mechanics, II, Springer 1981.

\item{[BJP]} Bratteli, Ola,; Jorgensen, Palle E.T. and Price, G.L.: 
Endomorphism of $\clb(\clh)$, Quantisation, nonlinear partial differential 
equations, Operator algebras, ( Cambridge, MA, 1994), 93-138, Proc. Sympos.
Pure Math 59, Amer. Math. Soc. Providence, RT 1996.

\item{[BJKW]} Bratteli, Ola,; Jorgensen, Palle E.T.; Kishimoto, Akitaka and
Werner Reinhard F.: Pure states on $\clo_d$, J.Operator Theory 43 (2000),
no-1, 97-143.    

\item{[BJ]} Bratteli, Ola ; Jorgensen, Palle E.T. :Endomorphism of $\clb(\clh)$, II, 
Finitely correlated states on $\clo_N$, J. Functional Analysis 145, 323-373 (1997). 

\item{[Cun]} Cuntz, J.: Simple $C\sp*$-algebras generated by isometries. Comm. Math. Phys. 57, 
no. 2, 173--185 (1977).  

\item{[DR]} Dagotto, E.;Rice, T.M.: Surprise on the way from one to two dimensional quantum magnets: The ladder materials, Sciences 271, 618-623 (1996)  

\item{[DLS]} Dyson, Freeman J.; Lieb, Elliott H.; Simon, B.: Barry Phase transitions in quantum spin systems with isotropic and non-isotropic interactions. J. Statistical. Phys. 18 (1978), no. 4, 335-383.

\item{[DHR]} Doplicher,S., Haag, R. and Roberts, J.: Local observables and particle statistics I, II, Comm. Math. Phys. 23 (1971) 119-230 and 35, 49–85 (1974)

\item{[DL]} Doplicher, S.; Longo, R.: Standard and split inclusions of von Neumann algebras. Invent. Math. 75, 493-536 (1984)

\item{[Ef]} Efstratios Manousakis: The spin-${1 \over 2}$ Heisenberg antiferromagnet on a square lattice and its application to the cuprous oxides, Rev. Mod. Phys. 63, 1–62 (1991

\item{[FNW1]} Fannes, M.; Nachtergaele, B.; Werner, R.: Finitely correlated states on quantum spin chains
Commun. Math. Phys. 144, 443-490(1992).

\item{[FNW2]} Fannes, M.; Nachtergaele, B.; Werner, R.: Finitely correlated pure states, J. Funct. Anal. 120, 511-
534 (1994). 

\item{[FNW3]} Fannes, M.; Nachtergaele B.; Werner, R.: Abundance of translation invariant states on quantum spin chains, Lett. Math. Phys. 25 no.3, 249-258 (1992).

\item{[FILS]} Fr\"{o}hlich, J.; Israel, R., Lieb, E.H., Simon, B.: Phase Transitions and Reflection Positivity-I 
general theory and long range lattice models, Comm. Math. Phys. 62 (1978), 1-34 

\item{[GV1]} D. Goderis, A. Verbeure and P. Vets, Non commutative central limits, Prob. Th. Related Fields 82 (1989) 527-544.

\item{[GV2]} D. Goderis and P. Vets, Central limit theorem for mixing quantum systems and the CCR-algebra of fluctuations, Commun. Math. Phys. 122 (1989) 249-265.

\item{[GV3]} D. Goderis, A. Verbeure and P. Vets, Dynamics of fluctuations for quantum lattice systems, Commun. Math. Phys. 128 (1990) 533-549

\item{[GM]} Ghosh, D.; Majumdar, C.K.: On Next ‐ Nearest ‐ Neighbor Interaction in Linear Chain. J. Math. Phys, 10, 1388 (1969). 

\item{[Hag]} Haag, R.: Local quantum physics, Fields, Particles, Algebras, Springer 1992. 

\item {[Ka]} Kadison, Richard V.: Isometries of operator algebras, Ann. Math. 54(2)(1951) 325-338.

\item{[KMSW]} Keyl, M.; Matsui, T.;Schlingemann, D.; Werner, R. F.: Entanglement Haag-duality and type properties of infinite quantum spin chains. Rev. Math. Phys. 18 (2006), no. 9, 935
-970

\item{[Ki]} Kishimoto, A.: On uniqueness of KMS-states of one-dimensional quantum lattice systems, Comm. Maths. Phys. 47, 167-170 (1976).

\item{[LSM]} Lieb, L.; Schultz,T.; Mattis, D.: Two soluble models of an anti-ferromagnetic chain Ann. Phys. (N.Y.) 16, 407-466 (1961).

\item{[Mac49]} Mackey, George W.: A theorem of Stone and von Neumann, Duke Math. J. 16 (1949), 313–326.

\item{[Ma1]} Matsui, A.: Ground states of fermions on lattices, Comm. Math. Phys. 182, no.3 723-751 (1996). 

\item{[Ma2]} Matsui, T.: A characterization of pure finitely correlated states. 
Infin. Dimens. Anal. Quantum Probab. Relat. Top. 1, no. 4, 647--661 (1998).

\item{[Ma3]} Matsui, T.: The split property and the symmetry breaking of the quantum spin chain, Comm. 
Maths. Phys vol-218, 293-416 (2001) 

\item{[Ma4]} Matsui, T.: On the absence of non-periodic ground states for the antiferromagnetic XXZ model. Comm. Math. Phys. 253 (2005), no. 3, 585-609.

\item{[Ma5]} Matsui, T.: Bosonic central limit theorem for the
one-dimensional XY Model, Reviews in Mathematical Physics, Vol. 14, No. 7 and 8 (2002) 675-700.

\item{[Mo1]} Mohari, A.: Pure inductive limit state and Kolmogorov's property. II. J. Operator Theory 72 (2014), no. 2, 387-404.

\item{[Mo2]} Mohari, A.: Translation invariant pure state on $\otimes_{k \in \IZ} \!M^{(k)}_d(\IC)$ and Haag duality. Complex Anal. Oper. Theory 8 (2014), no. 3, 745-789.

\item{[Mo3]} Mohari, A.: $SU(2)$ spontaneous symmetry breaking in ground state of quantum spin chain, Preprint. 
 
\item{[Na]} Nachtergaele, B. Quantum Spin Systems after DLS 1978, `` Spectral Theory and Mathematical Physics: A Festschrift in Honor of Barry Simon's 60th Birthday '' Fritz Gesztesy et al. (Eds), 
Proceedings of Symposia in Pure Mathematics, Vol 76, part 1, pp 47--68, AMS, 2007. 

\item{[NaS]} Nachtergaele, B; Sims, R. : Lieb-Robinson bounds and the exponential clustering theorem, Comm. Math. Phys. 265, 119-130 (2006). 

\item{[OP]} Ohya, M., Petz, D.: Quantum entropy and its use, Text
and monograph in physics, Springer-Verlag

\item{[Po]} Popescu, G.: Isometric dilations for infinite sequences of non-commutating operators, Trans. Amer. Math.
Soc. 316 no-2, 523-536 (1989)

\item{[Pow]} Powers, Robert T.: Representations of uniformly hyper-finite algebras and their associated von Neumann. rings, Annals of Math. 86 (1967), 138-171.

\item{[Ru]} Ruelle, D. : Statistical Mechanics, Benjamin, New York-Amsterdam (1969) . 

\item{[Si]} Simon, B.: The statistical mechanics of lattice gases. Vol. I. Princeton Series in Physics. Princeton University Press, Princeton, NJ, 1993. 

\item{[Sa]} Sakai, S. : Operator algebras in dynamical systems. The theory of unbounded derivations in $C\sp *$-algebras. 
Encyclopedia of Mathematics and its Applications, 41. Cambridge University Press, Cambridge, 1991. 

\item{[So]} St\o rmer E.: On projection maps of von Neumann algebras, Math. Scand. 30, 46-50 (1972). 

\item{[SW]} Summers, J. S.; Werner,R.: Maximal violation of Bell's inequalities is generic
in quantum field theory, Comm. Math. Phys. 110 (2) (1987) 247-259.

\item{[Ta]} Takesaki, M.: Conditional Expectations in von Neumann Algebras, J. Funct. Anal., 9, pp. 306-321 (1972)

\item{[Wa]} Wassermann, Antony : Ergodic actions of compact groups on operator algebras. I. General theory. Ann. of Math. (2) 130 (1989), no. 2, 273-319 

\end{itemize}

\end{document}